\newcommand{\remove}[1]{}
\def\fskip#1{}
\newtheorem{theorem}{Theorem}
\newtheorem{corollary}{Corollary}
\newtheorem{definition}{Definition}
\newtheorem{example}{Example}
\newtheorem{lemma}{Lemma}
\def\1{{\bf 1}}
\def\F{\mathscr{F}}
\def\l{\lambda}
\def\g{\gamma}
\def\a{\alpha}
\def\tl{\tilde}
\def\o{\omega}
\def\R{\mathbb{R}}
\def\re{\mathbb{R}}
\def\bS{\mathbb{S}}
\newcommand{\EXP}[1]{\mathsf{E}\!\left[#1\right] }
\newcommand{\prob}{\mathsf{Pr}}
\def\argmin{\mathop{\rm argmin}}
\begin{document}
\title{On Ergodicity, Infinite Flow and Consensus in Random Models}
\author{Behrouz Touri, 
and Angelia Nedi\'c\thanks{Department of 
Industrial and Enterprise Systems Engineering, University of
Illinois, Urbana, IL 61801,
Email: \{touri1,angelia\} @illinois.edu. This research 
is supported by the National Science Foundation under 
CAREER grant CMMI 07-42538.}
}
\date{}
\maketitle
\thispagestyle{headings}
\begin{abstract}
We consider the ergodicity and consensus problem for a discrete-time
linear dynamic model driven by random stochastic matrices, which is equivalent 
to studying these concepts for the product of such matrices. 
Our focus is on the model where the random matrices
have independent but time-variant distribution. 
We introduce a new phenomenon, the infinite flow, and we study
its fundamental properties and relations with the ergodicity and consensus.
The central result is the infinite flow theorem establishing 
the equivalence between the infinite flow and the ergodicity 
for a class of independent random models, where 
the matrices in the model have a common steady state in expectation 
and a feedback property. For such models, this result demonstrates that
the expected infinite flow is both necessary and sufficient for the ergodicity.
The result is providing a deterministic characterization of
the ergodicity, which 
can be used for studying the consensus and average consensus over 
random graphs. 
\end{abstract}

\textit{Keywords:} Ergodicity, random consensus, linear random model, product of random matrices, infinite flow.

\section{Introduction}\label{sec:introduction}

There is evidence of a growing number of applications in decentralized control 
of networked agents, as well as social and other networks where the consensus 
is used as a mechanism for decentralized coordination of agent actions.
The focus of this paper is on a canonical consensus problem for a linear 
discrete-time dynamic system driven by a general model of random matrices,
where the matrices are row-stochastic.  Investigating whether the model reaches
a consensus or not is often done by exploring the conditions that ensure the 
ergodicity, which in turn always guarantees the consensus.

In this paper, we propose an alternative approach by introducing a concept of
the model with infinite flow property, which can be interpreted as infinite 
information flow over time between a group of agents and the other agents in 
the network. We show that the infinite flow property is closely related to 
the ergodicity and, hence, to the consensus. In particular,
we show the equivalence between the infinite flow and the ergodicity
for a class of independent random models. 

We start by a comprehensive study of the fundamental relations and properties 
of the ergodicity, consensus and infinite flow for general random models,
independent models and independent identically distributed (i.i.d.)
random models. We then investigate the random models with a feedback property 
and the models with a common steady state for the expected matrices.
Both of these properties have been used in the analysis of consensus models,
but a deeper understanding of their roles has not been observed.
We classify feedback property in three basic types from weak to strong 
and show some relations for them.
Then, we study the models with a common steady state in expectation.
By putting all the pieces together, we show that the ergodicity of the model 
is equivalent to the infinite flow property for a class of independent random 
models with feedback property and a common steady state in expectation,
as given in infinite flow theorem (Theorem~\ref{thm:infflowthm}). 
The infinite flow theorem also establishes the equivalence 
between the infinite flow properties of the model and the expected model.
Furthermore, the theorem also shows the equivalence between the ergodicity 
of the model and the ergodicity of the expected model.
As such, the theorem provides a novel deterministic 
characterization of the ergodicity,
thus rendering another tool for studying the consensus
over random networks and convergence of random consensus algorithms.  

The main contributions of this paper include: 
1) the equivalence of the ergodicity of the model and the 
expected model for a class of independent random models with 
a feedback property and a common steady state in expectation;
2) the new insights and understanding of the ergodicity and consensus events 
over random networks brought to light through a new phenomena of infinite flow 
event, which to the best of our knowledge 
has not been known prior to this work; 
3) novel comprehensive study of the fundamental properties of
the consensus and ergodicity events for general class of 
independent random models; 
4) new insights into the role of feedback property and the role of
a common steady state in expectation for the ergodicity and consensus.

The study of the random product of stochastic matrices dates back to the early 
work in~\cite{Rosenblatt} where the convergence of the product of 
i.i.d.~random stochastic matrices 
was studied using the algebraic and topological structures of 
the set of stochastic matrices. This work was further extended 
in~\cite{Nawrotzki1,Nawrotzki2,Cogburn} 
by using results from ergodic theory of stationary processes and 
their algebraic properties.
In \cite{Tahbaz-Salehi08}, the ergodicity and consensus of the product of 
i.i.d.\ random stochastic matrices was studied using tools 
from linear algebra and probability theory, and   
a necessary and sufficient condition for 
the ergodicity was established for a class of i.i.d.\ models. 
Independently, the same problem was tackled 
in~\cite{Zampieri}, where an exponential convergence 
bound was established. 
Recently, the work in~\cite{Tahbaz-Salehi08} was extended to 
ergodic stationary processes in~\cite{Tahbaz-Salehi09}.
 
In all of the works \cite{Rosenblatt,Nawrotzki1,Nawrotzki2,Cogburn,Tahbaz-Salehi08,Zampieri,Tahbaz-Salehi09}, the underlying random models are assumed to be 
either i.i.d.\ or stationary processes, both of which imply time-invariant 
distribution on the random model. Unlike these works, 
our work in this paper is focused on the independent random models with 
{\it time-variant distributions}. Furthermore, 
we study ergodicity and 
consensus for such models using martingale and supermartingale convergence
results combined with the basic tools from probability theory. 
Our work is also related to the consensus
over random networks \cite{Hatano05}, 
optimization over random networks \cite{Lobel08}, and the consensus
over a network with random link failures \cite{Patterson09}.
Related are also gossip and broadcast-gossip schemes 
giving rise to a random consensus over a given connected bi-directional 
communication network \cite{Boyd06,Dimakis08,Aysal09,CarliFagnani10}.
On a broader basis, the paper is related to the literature on the consensus 
over networks with noisy links 
\cite{Huang07a,Huang09,KarMoura07,Touri09} 
and the deterministic consensus in decentralized systems
models \cite{Tsitsiklis84,TsitsiklisAthans84,Tsitsiklis86,Bertsekas89,
LiBasar87}, 
\cite{Jadbabaie03,Olfati-Saber04,Olshevsky09,RenBeard05} including
the effects of quantization and delay
\cite{Kashyap07,Olshevsky09a,CarliFagnani06,CarliZampieri07,Bliman_cdc08,
CarliFagnani10}. 

The paper is organized as follows. In Section \ref{sec:problem}, we describe
a discrete-time random linear dynamic system of our interest,
and introduce the ergodicity, consensus and infinite flow events.
In Section~\ref{sec:ergodicity}, we explore the relations among these events 
and establish their 0-1 law and other properties  by considering general, 
independent and i.i.d.\ random models. In Section~\ref{sec:geometry},
we discuss models with feedback properties and provide classification
of such properties with insights into their relations.
We also consider independent random model with a common steady state in 
expectation. In Section~\ref{sec:infiniteflow}, we focus on
independent random models with infinite flow property.
We establish necessary and sufficient conditions for ergodicity, and briefly 
discuss some implications of these conditions. 
We conclude in Section~\ref{sec:conclusion}.

\noindent {\bf Notation and Basic Terminology.} 
We view all vectors as columns. For a vector $x$, we write $x_i$
to denote its $i$th entry, and we write $x\ge0$ ($x>0$) to denote that 
all its entries are nonnegative (positive). We use $x^T$ to denote 
the transpose of a vector $x$. We write $\|x\|$ to  denote the standard 
Euclidean vector norm i.e., $\|x\|=\sqrt{\sum_{i}x_i^2}$.
We use $e_i$ to denote the vector with the $i$th entry equal to~1 and 
all other entries equal to~$0$, and use 
$e$ to denote the vector with all entries equal to~$1$.
A vector $a$ is  {\it stochastic} when 
$a\ge0$ and $\sum_ia_i =1$. 
We write $\{x(k)\}$ or $\{x(k)\}_{k\ge0}$ to denote a 
sequence $x(0),x(1),\ldots$ of some elements,
and we write $\{x(k)\}_{k\ge t}$ to denote the truncated sequence 
$x(t),x(t+1),\ldots$ for $t>0$. 
For a set $C$ and a subset $S$ of $C$, we write $S\subset C$ to denote
that $S$ is a proper subset of $C$. A set $S\subset C$ such that 
$S\ne\emptyset$ is referred to as a \textit{nontrivial} subset of $C$. 
We write $[m]$ to denote the integer set $\{1,\ldots,m\}$.
For a set $S\subset[m]$, we let $\bar S$ denote the 
complement set of $S$ with respect to $[m]$, i.e., 
$\bar S=\{i\in[m]\mid i\notin S\}$.

We denote the identity matrix by $I$. For a vector $v$, we 
use $diag(v)$ to denote the diagonal matrix with diagonal entries
being the components $v_i$ of the vector $v$. 
For a matrix $W$, we write  $W_{ij}$ to denote its $(i,j)$th entry, 
$W^i$ to denote its $i$th column vector, and $W^T$ to denote its transpose.
For an $m\times m$ matrix $W$, we use $\sum_{i<j}W_{ij}$ to denote
the summation of the entries $W_{ij}$ over all $i,j\in [m]$ with $i<j$.
A matrix $W$ is {\it row-stochastic} 
when its entries are nonnegative and $We=e$.
Since we deal exclusively with row-stochastic matrices, 
we will refer to such matrices simply as {\it stochastic}.
We let $\bS^m$ denote the set of $m\times m$ stochastic matrices.
A matrix $W$ is {\it doubly stochastic} when both $W$ and $W^T$ are stochastic.
 
We write $\EXP{X}$ 
to denote the expected value of a random variable $X$.
We use $\prob(\mathscr{A})$ 
and ${\it 1}_{\mathscr{A}}$ to denote the probability and
the characteristic function of an event $\mathscr{A}$, respectively.
If $\prob(\mathscr{A})=1$, we say that $\mathscr{A}$ 
happens almost surely. We often abbreviate ``almost surely'' by {\it a.s.} 

\section{Problem Formulation and Terminology}\label{sec:problem}
Throughout this article, we deal exclusively with
the matrices in the set $\bS^m$ 
of $m\times m$ stochastic matrices.
We consider the topology induced by the open sets in  
$\bS^m$ with respect to the Euclidean norm and the 
Borel sigma-algebra $\F_{\bS^m}$ of this topology. 
We assume that we are given a probability space 
$(\Omega,\mathscr{R},\prob(\cdot))$
and a measurable function $W:\Omega\to\Pi_{k=0}^\infty\,\bS^m$. 
To every $\omega\in \Omega$, the function $W(\cdot)$ is assigning a 
discrete time process $\{W(k)\}(\omega)$
in the countable product measurable space 
$\Pi_{k=0}^\infty\,(\bS^m,\F_{\bS^m})$,
where $W(k)\in\mathbb{S}^m$ is the random matrix of the process at time $k$. 
We refer to the process $\{W(k)\}(\omega)$ interchangeably 
as a \textit{random chain} or a {\it random model} and, when suitable, 
we suppress the explicit dependence on the variable $\omega$. 
We say that the chain $\{W(k)\}$ is independent if the sigma algebras generated
by the $W(k)$s for different $k\geq 0$ are independent. If  in addition 
$W(k)$s are identically distributed, then the model is 
independent identically distributed (i.i.d.).

With a given random chain $\{W(k)\}$, we associate 
a linear discrete-time dynamic system of the following form:
\begin{equation}\label{eqn:dynsys}
x(k+1)=W(k)x(k)\qquad\hbox{for $k=0,1,2,\ldots$},
\end{equation}
where $x(k)\in\mathbb{R}^m$ is a state vector at time $k$ and  
$x(0)$ is the initial state vector.
We will often refer to the system in (\ref{eqn:dynsys}) as 
{\it the dynamic system driven by the chain $\{W(k)\}$}.

We are interested in providing conditions guaranteeing that 
the dynamic system reaches a consensus almost surely. 
Since reaching the consensus is closely related to the 
ergodicity of the chain, we are also interested in studying the ergodicity
on the fundamental level. In our study of the random consensus and ergodicity, 
we use another property of the chain, an infinite flow property. 
We start by providing these basic notions for 
a deterministic chain. 

\begin{definition}\label{def:basic}
Given a deterministic chain $\{G(k)\}\subset\mathbb{S}^m$, we say that:\\
The system $z(k+1)=G(k)z(k)$ {\it reaches a consensus}
if for any initial state $z(0)\in\re^m$, 
there exists a scalar $c(z(0))$ such that 
$\lim_{k\to\infty} \|z(k)-c(z(0))\,e\|=0.$\\
The chain $\{G(k)\}$ is 
\textit{ergodic} if for any $t\ge0$ and $j\in[m]$,
there is a scalar $g_j(t)$ such that 
\[\lim_{k\rightarrow \infty}\Phi_{ij}(k,t)=g_j(t)\qquad
\hbox{for all $i\in [m]$},\]
where $\Phi(k,t)=G(k)G(k-1)\cdots G(t)$ for $k>t$ and $t\ge0$.\\
The chain $\{G(k)\}$
{\it has infinite flow} property if 
$\sum_{k=0}^\infty \sum_{i\in S,\, j\in \bar S} 
\left(G_{ij}(k) + G_{ji}(k)\right)=\infty$
for any nontrivial subset $S\subset[m]$.
\end{definition}

In the definition of the infinite flow property,  
the quantity $\sum_{i\in S,\, j\in \bar S} 
\left(G_{ij}(k) + G_{ji}(k)\right)$ can be interpreted as 
a flow between the subset $S$ and its complement $\bar S$ 
in a weighted graph. 
In particular, consider the undirected weighted graph 
$\mathcal{G}(k)$ with the node set $[m]$, the edge set induced 
by the positive entries in $G(k)+G^T(k)$, 
and the weight matrix $G(k)+G^T(k)$.
Then, the quantity $\sum_{i\in S,\, j\in \bar S} 
\left(G_{ij}(k) + G_{ji}(k)\right)$ represents the
flow in graph $\mathcal{G}(k)$ across the cut $(S,\bar S)$ 
for a nontrivial node set 
$S\subset[m]$ and its complement $\bar S$. 
For the graphs $\mathcal{G}(k)$ induced by the matrices $G(k)$,
the infinite flow property requires that
the total flow in time across any nontrivial cut $(S,\bar S)$
is infinite, which could be viewed as infinite information exchange
between the nodes in $S$ and $\bar S$.

The ergodicity is equivalent to 
the following condition~\cite{SenetaCons}: for any $t\ge0$ and $x\in\re^m$, 
there is a scalar $\gamma(t,x)$ such that
$\lim_{k\rightarrow \infty}\Phi(k,t)x=\gamma(t,x)\, e.$
Since the matrices $G(k)$ have finite dimension,
the ergodicity is also equivalent to the following condition:  
for any $t\ge0$ and any $\ell\in[m]$, there is a scalar $\gamma_\ell(t)$ 
such that $\lim_{k\rightarrow \infty}\Phi(k,t)e_\ell=\gamma_\ell(t)\, e.$
Also, due to the linearity and finite dimension
of the system $z(k+1)=G(k)z(k)$,
the consensus can be studied by considering
only the initial states $x(0)=e_\ell$, $\ell\in[m]$, rather than
all $x(0)\in\re^m$. 

Clearly, the ergodicity of the chain implies reaching a consensus.
However, a consensus may be reached even if
the chain $\{G(k)\}$ is not ergodic, as seen in the following example.

\begin{example}\label{exam:noerg}
Let $G(0)=ev^T$ for a stochastic vector $v$, and 
let $G(k)=I$ for all $k\ge1$.
Then, we have $\Phi(k,0)=ev^T$ for all $k\ge 1,$ implying that 
the system $x(k+1)=G(k)x(k)$ reaches a consensus.
However, the chain $\{G(k)\}$ is not ergodic since $\Phi(k,t)=I$ 
for any $k>t\ge1$. $\square$
\end{example}

Using Definition~\ref{def:basic}, we now introduce the 
corresponding events of consensus, ergodicity and infinite flow.
Given a random chain $\{W(k)\}$, let $\mathscr{C}$  
denote the event that the system in~(\ref{eqn:dynsys}) 
reaches a consensus for any initial state $x(0)$.
Let $\mathscr{E}$ denote the event that the chain $\{W(k)\}$ is ergodic, and
let $\mathscr{F}$ denote the event 
that the chain has the {\it infinite flow property}.
We refer to $\mathscr{C}$, $\mathscr{E}$ and $\mathscr{F}$
as the {\it consensus event}, the {\it ergodicity event} and 
{\it the infinite flow event}, respectively.
We say that 
the {\it model is ergodic} if the ergodicity event $\mathscr{E}$ occurs 
almost surely. The {\it model admits consensus}  
if the consensus event $\mathscr{C}$ occurs almost surely.
The {\it model has infinite flow}
if the infinite flow event $\mathscr{F}$ occurs almost surely.
The {\it model has expected infinite flow} if its expected chain
$\{\EXP{W(k)}\}$ has infinite flow. 

\section{Infinite Flow, Ergodicity  and Consensus}\label{sec:ergodicity}
In this section, we further study the ergodicity event $\mathscr{E}$,
the consensus event $\mathscr{C}$ and the infinite flow event $\mathscr{F}$
under different assumptions on the nature of the randomness in the model.
In particular, in Section~\ref{sec:relations} we establish some fundamental 
relations among $\mathscr{E}$, $\mathscr{C}$ and $\mathscr{F}$. 
In Section~\ref{sec:0-1laws}, we investigate the 0-1 law properties of 
these events, while in Section~\ref{sec:expected} we provide some relations for
a random model and its expected model.

Throughout the rest of the paper, we use the following notation.
For a stochastic matrix $W$ and a nontrivial subset $S\subset[m]$, 
we define $W_S$ as follows: 
\begin{equation}\label{eqn:flowWS}
W_S=\sum_{i\in S,j\in \bar{S}} \left(W_{ij}+ W_{ji}\right).
\end{equation}
Note that for a given a random model $\{W(k)\}$,
the infinite flow event $\mathscr{F}$ is given by
\begin{equation}\label{eqn:infflow} 
\mathscr{F}=\bigcap_{S\subset[m]}
\left\{\sum_{k=0}^{\infty}W_S(k)=\infty\right\}.
\end{equation}
Note also that the infinite flow event requires that, across any nontrivial 
cut $(S,\bar S)$, the total flow in the random graphs induced by the matrices 
$W(k)+W^T(k), k\ge0,$ is infinite.

\subsection{Basic Relations}\label{sec:relations}
As discussed in Section~\ref{sec:problem}, 
we have $\mathscr{E}\subseteq \mathscr{C}$ for any random model.
We here show that the ergodicity event is also
always contained in the infinite flow event, i.e., 
$\mathscr{E}\subseteq \mathscr{F}$. 
We establish this by using the following result for a deterministic model.

\begin{lemma}\label{lemma:flowmaxminS}
Let $\{A(k)\}\subset\bS^m$ be a deterministic sequence, and 
let $\{z(k)\}$ be generated by $z(k+1)=A(k)z(k)$ for all $k\ge0$
with an initial state $z(0)\in\mathbb{R}^m$.
Then, for any nontrivial subset $S\subset[m]$ and $k\ge0$, we have
\[\max_{i\in S} z_i(k+1)\le \max_{s\in S}z_s(0)+d(z(0))\,\sum_{t=0}^kA_S(t),\]
\[\min_{j\in\bar S} z_j(k+1)\ge \min_{r\in \bar{S}}z_r(0)-d(z(0))\,
\sum_{t=0}^k A_S(t),\]
where $d(y)=\max_{\ell\in[m]} y_\ell-\min_{r\in[m]} y_r$ for $y\in\re^m.$
\end{lemma}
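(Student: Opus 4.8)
The plan is to derive a one-step recursion for the quantity $M_S(k):=\max_{i\in S}z_i(k)$, and then iterate it; the second inequality follows by the mirror-image argument for $m_{\bar S}(k):=\min_{j\in\bar S}z_j(k)$. The one preparatory fact is the standard observation that, since each $A(k)$ is row-stochastic, $\max_{\ell\in[m]}z_\ell(k)$ is nonincreasing in $k$ and $\min_{\ell\in[m]}z_\ell(k)$ is nondecreasing in $k$; hence the spread $d(z(k))=\max_{\ell}z_\ell(k)-\min_{\ell}z_\ell(k)$ satisfies $d(z(k))\le d(z(0))$ for all $k\ge0$. This uniform bound is what we will use to control the ``cross terms'' between $S$ and $\bar S$.

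For the first inequality, fix $i\in S$ and split $z_i(k+1)=\sum_{j\in S}A_{ij}(k)z_j(k)+\sum_{j\in\bar S}A_{ij}(k)z_j(k)$. In the first sum use $z_j(k)\le M_S(k)$; in the second use $z_j(k)\le M_S(k)+d(z(k))\le M_S(k)+d(z(0))$, which is valid because $z_j(k)\le\max_\ell z_\ell(k)$ while $M_S(k)\ge\min_\ell z_\ell(k)$. Since $\sum_{j\in[m]}A_{ij}(k)=1$, these bounds collapse to $z_i(k+1)\le M_S(k)+d(z(0))\sum_{j\in\bar S}A_{ij}(k)$. Finally, because $i\in S$ and all entries of $A(k)$ are nonnegative, $\sum_{j\in\bar S}A_{ij}(k)\le\sum_{i'\in S,\,j\in\bar S}A_{i'j}(k)\le A_S(k)$, so $z_i(k+1)\le M_S(k)+d(z(0))A_S(k)$. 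Taking the maximum over $i\in S$ gives $M_S(k+1)\le M_S(k)+d(z(0))A_S(k)$, and summing this telescoping bound from $t=0$ to $t=k$ yields exactly $\max_{i\in S}z_i(k+1)\le\max_{s\in S}z_s(0)+d(z(0))\sum_{t=0}^{k}A_S(t)$.

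The second inequality is obtained by reflecting the argument: for $j\in\bar S$, split $z_j(k+1)=\sum_{\ell\in\bar S}A_{j\ell}(k)z_\ell(k)+\sum_{\ell\in S}A_{j\ell}(k)z_\ell(k)$, bound the $\bar S$-part below by $m_{\bar S}(k)$ and the $S$-part below by $m_{\bar S}(k)-d(z(0))$ (again using $z_\ell(k)\ge\min_p z_p(k)$ and $m_{\bar S}(k)\le\max_p z_p(k)$), collapse with row-stochasticity to get $z_j(k+1)\ge m_{\bar S}(k)-d(z(0))\sum_{\ell\in S}A_{j\ell}(k)\ge m_{\bar S}(k)-d(z(0))A_S(k)$, take the minimum over $j\in\bar S$, and iterate.

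I do not expect a genuine obstacle here. The only points requiring a little care are: using the monotonicity of the global max and min so that the single constant $d(z(0))$ controls the cross terms at every time $k$; and checking that the ``leakage'' weight $\sum_{j\in\bar S}A_{ij}(k)$ (respectively $\sum_{\ell\in S}A_{j\ell}(k)$) is dominated by $A_S(k)$ itself rather than $2A_S(k)$ — which it is, since $A_S(k)$ already sums over the entire set $S$, not just the single row under consideration.
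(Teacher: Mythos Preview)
Your proof is correct and follows essentially the same approach as the paper: derive the one-step recursion $M_S(k+1)\le M_S(k)+d(z(0))A_S(k)$ by splitting each row according to $S$ and $\bar S$, bounding the cross term via the global spread, using $\sum_{j\in\bar S}A_{ij}(k)\le A_S(k)$, and then telescoping. The only cosmetic differences are that the paper bounds the $\bar S$-entries by the constant $z_{\max}(0)$ (rather than by $M_S(k)+d(z(0))$ as you do) before rearranging, and it obtains the second inequality by applying the first to $-z(0)$ instead of writing out the mirror argument.
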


\begin{proof}
Let $S\subset[m]$ be an arbitrary nontrivial set and let $k\ge0$ be arbitrary. 
Let $z_{\min}(k)=\min_{r\in[m]} z_r$ and $z_{\max}(k)=\max_{s\in[m]} z_s(k)$.
Since $z_i(k+1)=\sum_{\ell=1}^m A_{i\ell}(k) z_\ell(k)$,
by the stochasticity of $A(k)$ we have 
$z_i(k)\in[z_{\min}(0),z_{\max}(0)]$ for all $i\in[m]$ and all $k$.  
Then, we obtain for $i\in S$,
\begin{align}\nonumber
&z_i(k+1)=\sum_{\ell\in S} A_{i\ell}(k)z_\ell(k)
+\sum_{\ell\in \bar{S}}A_{i\ell}(k)z_\ell(k)\cr 
&\quad\le \sum_{\ell\in S}A_{i\ell}(k) \left(\max_{s\in S}z_{s}(k)\right)
+z_{\max}(0)\sum_{\ell\in \bar{S}}A_{i\ell}(k),
\end{align}
where the inequality follows by $A_{i\ell}(k)\ge0$. 
By the stochasticity of $A(k)$, we also obtain
\begin{align*}
&z_i(k+1)
\leq \left(1-\sum_{\ell\in \bar S} A_{i\ell}(k)\right)
\max_{s\in S}z_s(k) \cr 
&\qquad\qquad+z_{\max}(0)\sum_{\ell\in \bar{S}}A_{i\ell}(k)\cr 
&=\max_{s\in S}z_s(k)+\left(z_{\max}(0)-\max_{s\in S}z_s(k)\right) 
\sum_{\ell\in \bar S}A_{i\ell}(k).\end{align*}
By the definition of $A_S$ in~\eqref{eqn:flowWS}, we have 
$0\le \sum_{\ell\in \bar S}A_{i\ell}(k)\leq A_S(k)$.
Since $z_{\max}(0)-\max_{s\in S}z_s(k)\ge0$, it follows 
\begin{align*}
z_i(k+1)&
\le \max_{s\in S}z_s(k)+(z_{\max}(0)-\max_{s\in S}z_s(k))A_S(k)
\cr 
&\le\max_{s\in S}z_s(k)+d(z(0))A_S(k),\end{align*}
where the last inequality holds since 
$z_{\max}(0)-\max_{s\in S}z_s(k)\leq z_{\max}(0)-z_{\min}(0)=d(z(0))$. 
By taking the maximum over all $i\in S$ in the preceding relation
and by recursively using the resulting inequality,
we obtain $\max_{i\in S}z_i(k+1)\le \max_{s\in S}z_s(k)+d(z(0))A_S(k)$ 
and recursively, we get $\max_{i\in S}z_i(k+1)\leq \max_{s\in S}z_s(0) 
+d(z(0))\sum_{t=0}^k A_S(t)$.

The relation for $\min_{j\in\bar S} z(k+1)$ follows from the preceding 
relation 
by considering $\{z(k)\}$ generated with the starting point $-z(0)$.
\end{proof}

Using Lemma~\ref{lemma:flowmaxminS}, we now show that 
the ergodicity event is contained in the infinite flow event.

\begin{theorem}\label{thm:infiniteflow}
Let $\{A(k)\}\subset\mathbb{S}^m$ be an ergodic  deterministic chain. 
Then $\sum_{k=0}^{\infty}A_S(k)=\infty$ for any nontrivial $S\subset[m]$.
In particular, we have $\mathscr{E}\subseteq \mathscr{F}$ 
for any random model. 
\end{theorem}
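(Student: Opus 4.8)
The plan is to prove the contrapositive of the first assertion: if there is a nontrivial subset $S\subset[m]$ with $\sum_{k=0}^{\infty}A_S(k)<\infty$, then the chain $\{A(k)\}$ is not ergodic. To do this, I would pick a convenient initial state that ``separates'' $S$ from $\bar S$, namely $z(0)$ defined by $z_i(0)=1$ for $i\in S$ and $z_i(0)=0$ for $i\in\bar S$. For this choice we have $d(z(0))=1$, $\max_{s\in S}z_s(0)=1$, and $\min_{r\in\bar S}z_r(0)=0$, so Lemma~\ref{lemma:flowmaxminS} gives, for every $k\ge0$,
\[
\max_{i\in S}z_i(k+1)\le 1+\sum_{t=0}^{k}A_S(t),\qquad
\min_{j\in\bar S}z_j(k+1)\ge -\sum_{t=0}^{k}A_S(t).
\]
These bounds are far too weak as stated; the trick is to apply the lemma not from time $0$ but from a late time $t^\ast$. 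Since $\sum_{k=0}^{\infty}A_S(k)<\infty$, choose $t^\ast$ large enough that $\sum_{t\ge t^\ast}A_S(t)<\tfrac14$.

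Next I would run the dynamics starting from time $t^\ast$ with the same separating vector as a fresh initial condition; more precisely, fix the initial state $x(0)=e_\ell$ for some $\ell\in S$ and track $z(k)=\Phi(k,t^\ast)\,e_\ell$ — wait, that is not quite the separating vector. The cleaner route: consider the linear map $\Phi(k,t^\ast)$ and apply Lemma~\ref{lemma:flowmaxminS} to the trajectory generated by $A(k)$ for $k\ge t^\ast$ started at the vector $\chi_S$ (indicator of $S$). Because $\chi_S$ is a fixed vector with $d(\chi_S)=1$, the lemma (with time origin shifted to $t^\ast$, which is legitimate since it holds for any deterministic sequence) yields for all $k\ge t^\ast$
\[
\max_{i\in S}\big(\Phi(k,t^\ast)\chi_S\big)_i\le 1+\sum_{t=t^\ast}^{k}A_S(t)<\tfrac54,
\]
but this upper bound is useless. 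The useful bounds come from the \emph{other} direction: applying the lemma with the roles of $S$ and $\bar S$ reversed shows $\min_{j\in S}(\Phi(k,t^\ast)\chi_S)_j\ge 1-\sum_{t\ge t^\ast}A_S(t)>\tfrac34$ and $\max_{i\in\bar S}(\Phi(k,t^\ast)\chi_S)_i\le \sum_{t\ge t^\ast}A_S(t)<\tfrac14$. Hence for every $k>t^\ast$ and every $i\in S$, $j\in\bar S$ we get $\big(\Phi(k,t^\ast)\chi_S\big)_i-\big(\Phi(k,t^\ast)\chi_S\big)_j>\tfrac12$. Writing $\chi_S=\sum_{\ell\in S}e_\ell$ and using linearity, this separation persists in the limit, so the columns $\Phi(k,t^\ast)e_\ell$ cannot all converge to a common vector of the form $g(t^\ast)e$; summing over $\ell\in S$ the entries indexed by $S$ stay bounded away from the entries indexed by $\bar S$. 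Therefore $\lim_{k\to\infty}\Phi_{ij}(k,t^\ast)$, even if it exists, cannot be independent of $i$, which contradicts ergodicity. This proves $\sum_{k=0}^{\infty}A_S(k)=\infty$ for every nontrivial $S$.

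For the ``in particular'' clause, I would argue pointwise in $\omega$. The ergodicity event $\mathscr{E}$ is the set of $\omega$ for which the deterministic chain $\{W(k)\}(\omega)$ is ergodic; the infinite flow event $\mathscr{F}$ is the set of $\omega$ for which $\sum_{k}W_S(k)(\omega)=\infty$ for every nontrivial $S$, as recorded in~\eqref{eqn:infflow}. The first part of the theorem, applied to the realized chain $A(k)=W(k)(\omega)$, says exactly that $\omega\in\mathscr{E}$ implies $\omega\in\mathscr{F}$, i.e.\ $\mathscr{E}\subseteq\mathscr{F}$, with no independence or distributional assumptions needed.

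The main obstacle is getting the constants to work: a single application of Lemma~\ref{lemma:flowmaxminS} from time $0$ is hopeless because $d(z(0))\sum_{t=0}^{k}A_S(t)$ may exceed $1$. The key idea that makes the argument go through is to exploit the \emph{tail} of the convergent series — shift the time origin to a point $t^\ast$ beyond which the accumulated flow is smaller than $\tfrac14$ — and to use the indicator vector $\chi_S$, whose spread $d(\chi_S)=1$ is controlled, together with \emph{both} the max-over-$S$ bound and the reversed min-over-$\bar S$ bound (equivalently, swapping $S\leftrightarrow\bar S$) so that the $S$-entries and $\bar S$-entries of $\Phi(k,t^\ast)\chi_S$ are pinned near $1$ and near $0$ respectively. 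Once that separation is in hand, the failure of ergodicity is immediate. A minor point to handle carefully is that the limit $g_j(t^\ast)$ in the definition of ergodicity is required for \emph{each} column index $j$; I would phrase the contradiction in terms of the vector $\Phi(k,t^\ast)\chi_S=\sum_{\ell\in S}\Phi(k,t^\ast)e_\ell$, whose limit would have to be $\big(\sum_{\ell\in S}g_\ell(t^\ast)\big)e$, a multiple of $e$, contradicting the persistent gap between its $S$- and $\bar S$-coordinates.
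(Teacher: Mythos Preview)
Your proposal is correct and follows essentially the same route as the paper: argue by contradiction, shift the time origin to some $t^\ast$ beyond which the tail $\sum_{t\ge t^\ast}A_S(t)<\tfrac14$, start from an indicator-type vector, and apply Lemma~\ref{lemma:flowmaxminS} to pin the $S$-entries and $\bar S$-entries on opposite sides of $\tfrac12$. The only cosmetic difference is that the paper takes the indicator $\bar z$ with $0$ on $S$ and $1$ on $\bar S$ (so Lemma~\ref{lemma:flowmaxminS} applies directly with the set $S$), whereas you take $\chi_S$ with $1$ on $S$ and $0$ on $\bar S$ and then swap $S\leftrightarrow\bar S$ in the lemma; since $A_{\bar S}=A_S$, the two are equivalent.
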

\begin{proof}
To arrive at a contradiction, assume that there is a nontrivial set 
$S\subset[m]$ such that $\sum_{k=0}^{\infty}A_S(k)<\infty$. 
Since the matrices $A(k)$ are stochastic, we have $A_S(k)\ge0$ for all $k$. 
Therefore, there exists large enough $\bar t\geq 0$ such that 
$\sum_{k=\bar t}^{\infty}A_S(k)<\frac{1}{4}$.

Now, define the vector $\bar z=(\bar z_1,\ldots,\bar z_m)^T$, 
where $\bar{z}_i=0$ for $i\in S$ and $\bar{z}_i=1$ for $i\in\bar{S}$. 
Consider the dynamic system $z(k+1)=A(k)z(k)$ for $k\ge\bar t$, 
which is started at time $\bar t$ in state $z(\bar t)=\bar z$.
Note that Lemma~\ref{lemma:flowmaxminS} applies to the case 
where the time $t=\bar t$ is
taken as initial time, in which case $d(0)$ corresponds to 
$d(\bar t)=\max_i z_i(\bar t)-\min_j z_j(\bar t)$.
Also, note that $d(\bar t)=1$
by the definition of the starting state $\bar z$. Thus, by
applying Lemma~\ref{lemma:flowmaxminS}, 
we have for all $k\ge\bar t$, 
$\max_{i\in S} z_i(k+1)\le \max_{s\in S}z_s(\bar t)+
\sum_{t=\bar t}^kA_S(t)$ and 
$\min_{j\in\bar S} z_j (k+1)\ge \min_{r\in \bar{S}}z_r(\bar t)-
\sum_{t=\bar t}^k A_S(t)$. 
Since $\max_{s\in S}z_s(\bar t)=0$ and $\min_{r\in \bar{S}}z_r(\bar t)=1$,
it follows that 
$\max_{i\in S} z_i(k+1)\le \sum_{t=\bar t}^kA_S(t)$ and 
$\min_{j\in\bar S} z_j (k+1)\ge 1-
\sum_{t=\bar t}^k A_S(t)$. Using these relations and 
$\sum_{k=\bar t}^{\infty}A_S(k)<\frac{1}{4}$,
we have $\liminf_{k\rightarrow\infty} \left(z_j(k)-z_i(k)\right)> 1-
2\sum_{t=\bar t}^\infty A_S(t)=\frac{1}{2}$ 
for any $j\in\bar S$ and $i\in S$,  
thus showing that the chain $\{A(k)\}$ is not ergodic - a contradiction. 
Therefore, we must have $\sum_{t=0}^\infty A_S(t)=\infty$ for any 
nontrivial $S\subset[m]$.

From the preceding argument and the definitions of  
$\mathscr{E}$ and $\mathscr{F}$, we conclude that $\o\in \mathscr{E}$ 
implies $\o\in\mathscr{F}$ for any random model $\{W(k)\}$.
Hence, $\mathscr{E}\subseteq \mathscr{F}$ for any random model.
\end{proof}

Theorem~\ref{thm:infiniteflow} shows that an ergodic model must have an 
infinite flow property. In other words, the infinite flow property of 
any random model is necessary for the ergodicity of the model. 
Later in Theorem~\ref{thm:sufficient}, for a certain class of random models,
we will show that the infinite flow is also sufficient for the ergodicity.

Figure~\ref{fig:events_indep_model} illustrates the inclusions 
$\mathscr{E}\subseteq\mathscr{C}$ and $\mathscr{E}\subseteq\mathscr{F}$
for a general random model. The inclusion 
$\mathscr{E}\subseteq \mathscr{C}\cap \mathscr{F}$ in 
Figure~\ref{fig:events_indep_model} can be strict as seen
in the following example.

\begin{example}
Consider the $2\times 2$ chain $\{A(k)\}$ defined by 
\[A(0)=\left[\begin{array}{cc}\frac{1}{2}&\frac{1}{2}\\\frac{1}{2}&\frac{1}{2}
\end{array}\right], \ \hbox{ and } \ 
A(k)=\left[\begin{array}{cc}0&1\\1&0
\end{array}\right]\ \hbox{for $k\geq 1$}.\]
For any stochastic matrix $B$, we have $BA(0)=A(0)$ and hence, 
the model admits consensus. Furthermore, 
the model has infinite flow property. However, the chain 
$\{A(k)\}_{k\geq 1}$ does not admit consensus. 
Therefore, in this case $\mathscr{C}\cap \mathscr{F}=\Omega$ 
(the entire space of realizations) while $\mathscr{E}=\emptyset$. 
$\square$
\end{example}

\begin{figure}
\vskip -1pc
 \begin{center}
   \includegraphics[width=0.4\linewidth]{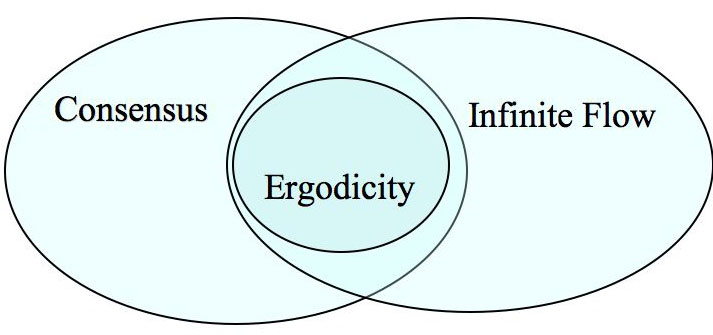}
 \end{center}\vspace{-0.4cm}
\caption{\small The relations among the ergodicity, consensus,
and infinite flow events for a general random model.}
\label{fig:events_indep_model}\vspace{-0.7cm}
\end{figure}
Next, we provide a sufficient condition
for the ergodicity and consensus events to coincide. 

\begin{lemma}\label{lemma:fulrank}
Let $\{W(k)\}$ be a (not necessarily independent) random chain such that 
each matrix $W(k)$ is invertible almost surely. 
Then, we have $\mathscr{E}=\mathscr{C}$ almost surely.
\end{lemma}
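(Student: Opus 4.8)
We already know from the discussion in Section~\ref{sec:problem} that $\mathscr{E}\subseteq\mathscr{C}$ holds for every random model, so the plan is to establish the reverse inclusion $\mathscr{C}\subseteq\mathscr{E}$ up to a null set, under the standing invertibility hypothesis. The idea is to work $\omega$ by $\omega$ on the almost sure event $\mathscr{N}$ on which every $W(k)(\omega)$ is invertible (a countable intersection of probability-one events, hence itself probability one), and show that for each such $\omega$, if the chain $\{W(k)\}(\omega)$ reaches a consensus for every initial state, then it is in fact ergodic. Thus the entire argument reduces to a deterministic statement: a deterministic chain $\{A(k)\}\subset\bS^m$ consisting of invertible matrices which admits consensus must be ergodic.

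For the deterministic claim, fix $t\ge0$; we must show $\lim_{k\to\infty}\Phi(k,t)e_\ell$ exists and is a multiple of $e$ for each $\ell\in[m]$, where $\Phi(k,t)=A(k)\cdots A(t)$. The point is to transfer the convergence known at time $t=0$ to an arbitrary later start time $t$. Since $A(0),\ldots,A(t-1)$ are invertible, the matrix $M=\Phi(t-1,0)=A(t-1)\cdots A(0)$ is invertible. For any $x\in\re^m$, set $y=M^{-1}x$; then $\Phi(k,t)x=\Phi(k,0)M M^{-1}x=\Phi(k,0)y$, which by the consensus hypothesis (using the equivalent formulation that $\Phi(k,0)y\to\gamma(0,y)e$ for all $y\in\re^m$, as noted after Definition~\ref{def:basic}) converges to $\gamma(0,M^{-1}x)\,e$. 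Hence $\lim_{k\to\infty}\Phi(k,t)x$ exists and is a scalar multiple of $e$ for every $x$ and every $t\ge0$, which is exactly the ergodicity of $\{A(k)\}$ in the equivalent vector form; specializing $x=e_\ell$ gives the scalars $g_\ell(t)$ in Definition~\ref{def:basic}. This proves $\{A(k)\}$ is ergodic, so on $\mathscr{N}$ we have $\mathscr{C}\subseteq\mathscr{E}$, and therefore $\mathscr{C}=\mathscr{E}$ a.s.

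The main obstacle, such as it is, is conceptual rather than technical: one has to notice that reaching consensus from \emph{every} initial state is much stronger than from a fixed state, and that invertibility is precisely what lets the "source'' of the chain be shifted from time $0$ to time $t$ without losing information—because $\Phi(k,0)$ ranges over the same vectors as $\Phi(k,t)$ composed with the (invertible) initial block. One should take a little care that the consensus hypothesis is being applied in its "all $x\in\re^m$'' form rather than only for the basis vectors $e_\ell$; but these are equivalent by linearity and finite dimension, as already observed in the text. No limiting or measurability subtleties arise beyond the routine observation that the a.s.\ invertibility of each $W(k)$, over countably many $k$, yields an a.s.\ event on which the deterministic argument applies verbatim.
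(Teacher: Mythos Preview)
Your proposal is correct and follows essentially the same approach as the paper: restrict to the almost sure event where every $W(k)$ is invertible, and then use the invertibility of the initial block $M=\Phi(t-1,0)$ to pull any state at time $t$ back to a state at time $0$, so that the consensus hypothesis applies. One small slip: the displayed identity should read $\Phi(k,t)x=\Phi(k,t)MM^{-1}x=\Phi(k,0)y$ (or simply $\Phi(k,0)M^{-1}x$), since $\Phi(k,0)=\Phi(k,t)M$; as written, $\Phi(k,0)MM^{-1}x$ collapses to $\Phi(k,0)x$, which is not what you mean.
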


\begin{proof}
The inclusion $\mathscr{E}\subseteq \mathscr{C}$ follows from the definition,
so it suffices to show $\mathscr{C}\subseteq\mathscr{E}$ almost surely.
In turn, to show $\mathscr{C}\subseteq\mathscr{E}$ almost surely, it 
suffices to prove that $\mathscr{C}\cap R\subseteq\mathscr{E}$ for a set $R$
such that $\prob(R)=1$.
For each $k\ge0$, let $R_k$ be the set of instances $\omega$ 
such that the matrix $W(k)(\omega)$ is invertible. 
Define $R=\cap_{k=0}^{\infty}R_k$. We have $\prob(R_k)=1$ for each $k\ge0$ 
by our assumption that each matrix $W(k)$ is invertible almost surely.
In view of this and the fact that 
the collection $\{R_k\}$ is countable, it follows that $\prob(R)=1$. 

We next show that $\mathscr{C}\cap R\subseteq\mathscr{E}$.
Let $\o\in \mathscr{C}\cap R$ so that $W(k)(\o)$ has full rank for all 
$k\geq 0$. To simplify notation, let $\tl W(k)= W(k)(\o)$.
Consider an arbitrary starting time $s\geq 0$. 
We show that the consensus is reached for the dynamic 
$z(t) = \tl W(t-1)z(t-1)$ with $t>s$, i.e., 
for any $z(s)\in\mathbb{R}^m$, we have 
$\lim_{t\rightarrow\infty}z(t) = c e$ for some $c\in \mathbb{R}$.
For a given $z(s)\in \mathbb{R}^m$, define $\tl x(0) =
[\tl W(s-1)\cdots \tl W(1) \tl W(0)]^{-1} z(s)$ and 
consider the dynamic $\tl x(k) =\tl W(k-1)\tl x(k-1)$ 
started at time $t_0=0$ with the initial vector $\tl x(0)$. 
By the definition of $\tl x(0)$, we have
$\tl x(s)=\tl W(s-1)\cdots \tl W(1) \tl W(0)\tl x(0)=z(s)$. Therefore, 
for all $t> s$,  
\begin{align*}\tl x(t)&= \tl W(t-1)\cdots \tl W(s-1)\cdots \tl W(1)\tl W(0)\tl x(0)\cr 
&=\tl W(t-1)\cdots \tl W(s)z(s)=z(t).
\end{align*}
By the definition of $\tl W(k)$, we have 
$\lim_{t\rightarrow\infty}\tl x(t) = c e$ 
for some $c\in\re$ (since $\o\in \mathscr{C}$). Therefore, it follows that 
$\lim_{t\rightarrow\infty}z(t) = c e$, thus showing that
the dynamic system $z(t)= \tl W(t-1)z(t-1)$, $t> s,$ 
reaches a consensus. Since this is true for arbitrary $s\ge0$ and  
$z(s)\in\mathbb{R}^m$, the chain $\{W(k)\}(\o)$ is ergodic, which implies 
$\o\in\mathscr{E}$.
\end{proof}

In general, there may be no further refinements of inclusion 
relations among the events $\mathscr{E},\mathscr{F}$
and $\mathscr{C}$ even when the model is independent, as indicated by
the following example. 
\begin{example}\label{ex:cons_nontriv}
Consider an independent random model where for $p\in(0,1]$, we have 
$W(0)=\frac{1}{m}ee^T$ with probability $p$, $W(0)=I$ with probability $1-p$s 
and $W(k)=I$ with probability~$1$ for all $k\ge 1.$
In this case, the consensus event $\mathscr{C}$ 
happens with probability~$p>0$.
However, the infinite flow and the ergodicity events are empty sets. $\square$
\end{example}

Example~\ref{ex:cons_nontriv} shows that we can have 
$\mathscr{C}\cap\mathscr{E}=\emptyset$
and $\mathscr{C}\cap\mathscr{F}=\emptyset$, while $\mathscr{C}\ne\emptyset$. 
Thus, even for an independent model the consensus event need not be
contained in either $\mathscr{E}$ or $\mathscr{F}$.
However, if we further restrict our attention to i.i.d.~models, 
we can show that $\mathscr{E}=\mathscr{C}$ almost surely. To establish this,
we make use of the following lemma.

\begin{lemma}\label{lemma:uniformdec}
Let $A\in\bS^m$ and $x\in\re^m$. Also, let $A$ be such that 
$\max_{i\in [m]} [Ae_\ell]_i-\min_{j\in[m]}[Ae_\ell]_j\leq \frac{1}{2m}$ 
for any $\ell\in[m]$, where $[v]_i$ denotes the $i$th 
component of a vector $v$.
Then, we have $\max_{i}[Ax]_i-\min_{j}[Ax]_j\leq \frac{1}{2}$ 
for any $x\in[0,1]^m$.
\end{lemma}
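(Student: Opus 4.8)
The plan is to reduce the statement to a column‑wise oscillation bound and then apply the triangle inequality. First I would observe that $Ae_\ell = A^\ell$ is precisely the $\ell$th column of $A$, so the hypothesis says that \emph{every} column of $A$ has small spread: for each $\ell\in[m]$,
\[
\max_{i\in[m]}A_{i\ell}-\min_{j\in[m]}A_{j\ell}\le\frac{1}{2m}.
\]
In particular, for any two indices $i,j\in[m]$ and any column index $\ell$, both $A_{i\ell}$ and $A_{j\ell}$ lie in the interval $\big[\min_{k}A_{k\ell},\,\max_{k}A_{k\ell}\big]$, whose length is at most $\frac{1}{2m}$, and hence $|A_{i\ell}-A_{j\ell}|\le\frac{1}{2m}$.

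Next, fix $x\in[0,1]^m$ and fix any pair $i,j\in[m]$. I would expand the difference of the corresponding components of $Ax$ as $[Ax]_i-[Ax]_j=\sum_{\ell=1}^m x_\ell\,(A_{i\ell}-A_{j\ell})$ and estimate it term by term: since $0\le x_\ell\le 1$, each summand satisfies $x_\ell(A_{i\ell}-A_{j\ell})\le x_\ell\,|A_{i\ell}-A_{j\ell}|\le\frac{1}{2m}$, so that summing over the $m$ columns gives $[Ax]_i-[Ax]_j\le m\cdot\frac{1}{2m}=\frac12$. Taking $i$ to be an index achieving $\max_{i}[Ax]_i$ and $j$ an index achieving $\min_{j}[Ax]_j$ yields $\max_i[Ax]_i-\min_j[Ax]_j\le\frac12$, which is the claim.

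There is no real obstacle in this argument; it is a one‑line triangle‑inequality bound once the hypothesis has been re‑read correctly. The only two points requiring a little care are: (i) translating the assumption, which is phrased in terms of the action of $A$ on the unit vectors $e_\ell$, into the equivalent bound on the oscillation of each column of $A$; and (ii) noticing that the relevant feature of $x\in[0,1]^m$ here is the bound $x_\ell\le1$ on each coordinate (rather than any normalization such as $\sum_\ell x_\ell=1$), which is why the factor $\frac{1}{2m}$ per column accumulates to exactly $\frac12$.
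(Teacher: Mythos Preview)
Your proof is correct and follows essentially the same route as the paper: expand $[Ax]_i-[Ax]_j=\sum_\ell x_\ell(A_{i\ell}-A_{j\ell})$, use $0\le x_\ell\le 1$ to bound each summand by $|A_{i\ell}-A_{j\ell}|\le\frac{1}{2m}$, and sum over the $m$ columns. The paper's argument is line-for-line the same, including the observation that $|A_{i\ell}-A_{j\ell}|$ is dominated by the column spread $\max_i A_{i\ell}-\min_j A_{j\ell}$.
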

\begin{proof}
Let $x\in\re^m$ with $x_\ell\in[0,1]$ for any $\ell\in[m]$. 
Then, we have for any $i,j\in[m]$, 
\begin{align*}
y_i-y_j&=\sum_{\ell=1}^m(A_{i\ell}-A_{j\ell})x_\ell
\leq \sum_{\ell=1}^m|A_{i\ell}-A_{j\ell}|\cr 
&=\sum_{\ell=1}^m \left| [Ae_\ell]_i-[Ae_\ell]_j \right|.
\end{align*}
By the assumption on $A$, we obtain
$\left| [Ae_\ell]_i-[Ae_\ell]_j \right|\le 
\max_{i\in [m]}[Ae_\ell]_i-\min_{j\in[m]}[Ae_\ell]_j\le \frac{1}{2m}$. Hence, 
$y_i-y_j\leq\sum_{\ell=1}^m\frac{1}{2m}=\frac{1}2$, implying 
$\max_{i}y_i-\min_{j}y_j\leq \frac{1}{2}$.
\end{proof}

We now provide our main result for i.i.d.~models, which states that
the ergodicity and the consensus events are almost surely equal.
We establish this result by using Lemma~\ref{lemma:uniformdec} and 
the Borel-Cantelli lemma (see \cite{Durrett}, page 50).

\begin{theorem}\label{thm:consensus01}
We have $\mathscr{E}=\mathscr{C}$
almost surely for any i.i.d.\ random model. 
\end{theorem}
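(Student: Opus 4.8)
The plan is to prove $\mathscr{C}\subseteq\mathscr{E}$ almost surely (the reverse inclusion is immediate from the definitions). Since the model is i.i.d., the key structural fact is that $\mathscr{E}$ and $\mathscr{C}$ are both tail events for the i.i.d.\ sequence $\{W(k)\}$, so by Kolmogorov's 0--1 law each has probability $0$ or $1$. Thus it suffices to show: if $\prob(\mathscr{C})>0$, then $\prob(\mathscr{E})>0$; combined with the 0--1 law and $\mathscr{E}\subseteq\mathscr{C}$, this forces $\prob(\mathscr{E})=\prob(\mathscr{C})$, and then a.s.\ equality of the events follows (as $\mathscr{E}\subseteq\mathscr{C}$ with equal probabilities). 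So assume $\prob(\mathscr{C})>0$, hence $\prob(\mathscr{C})=1$.

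First I would extract from the consensus event a quantitative contraction estimate holding with positive probability on finite blocks. Reaching consensus for the dynamic system means $d(x(k))=\max_i x_i(k)-\min_j x_j(k)\to 0$ for every initial state; since it suffices to track $x(0)=e_\ell$ and $d(\cdot)$ is nonincreasing along the dynamics (by stochasticity), on the event $\mathscr{C}$ we have $\Phi(k,0)e_\ell\to\gamma_\ell e$ for each $\ell$. Hence, on a probability-$1$ set, there is a (random) time $N$ with $\max_i[\Phi(N,0)e_\ell]_i-\min_j[\Phi(N,0)e_\ell]_j\le\frac{1}{2m}$ for all $\ell\in[m]$. Therefore there is a \emph{deterministic} block length $n$ and a deterministic $p_0>0$ such that
\[
p_0:=\prob\!\left(\max_{i}[\Phi(n,0)e_\ell]_i-\min_{j}[\Phi(n,0)e_\ell]_j\le\tfrac{1}{2m}\ \text{for all }\ell\in[m]\right)>0 .
\]
By Lemma~\ref{lemma:uniformdec}, on this event the block matrix $B_0:=\Phi(n,0)=W(n-1)\cdots W(0)$ satisfies $d(B_0 x)\le\frac12 d(x)$ for every $x\in[0,1]^m$, hence (by scaling and shifting, using that $d$ is invariant under adding multiples of $e$) $d(B_0 x)\le\frac12 d(x)$ for \emph{every} $x\in\re^m$.

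Next I would chain the blocks. Because the model is i.i.d., the block products $B_0=W(n-1)\cdots W(0)$, $B_1=W(2n-1)\cdots W(n)$, $B_2=W(3n-1)\cdots W(2n)$, \ldots are i.i.d.\ copies of $B_0$, and each independently lies in the ``good'' set $\{d(B\,\cdot)\le\frac12 d(\cdot)\}$ with probability at least $p_0>0$. By the second Borel--Cantelli lemma, almost surely infinitely many of the $B_r$ are good. Since $d(\cdot)$ is nonincreasing under multiplication by any stochastic matrix and is halved at every good block, for a.e.\ $\omega$ we get $d(\Phi(rn,0)x)\to 0$ as $r\to\infty$ for every $x$, and monotonicity fills in the non-multiples of $n$; thus the chain started at time $0$ is a.s.\ such that $\lim_{k\to\infty}\Phi(k,0)e_\ell$ exists and equals some $\gamma_\ell e$. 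Finally, to get ergodicity I must handle an arbitrary starting time $s$: the same argument applied to the block products beginning at time $s$ (again i.i.d., again good with probability $\ge p_0$ by stationarity) shows $\lim_{k\to\infty}\Phi(k,s)e_\ell=\gamma_\ell(s)e$ a.s.\ for each fixed $s$, and intersecting over the countably many pairs $(s,\ell)$ gives a probability-$1$ set on which $\{W(k)\}$ is ergodic. Hence $\prob(\mathscr{E})\ge$ that probability $=1$, so $\mathscr{E}=\mathscr{C}$ a.s.

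The main obstacle is the first step: converting the purely asymptotic, initial-state-uniform consensus statement into a \emph{deterministic} block length $n$ and \emph{uniform} positive probability $p_0$. This needs (i) the observation that, by finite-dimensionality, consensus for all $x(0)$ is equivalent to $\Phi(k,0)e_\ell\to\gamma_\ell e$ for the finitely many $\ell$ (already noted in the text), and (ii) a continuity-from-below argument: the events $\{d(\Phi(n,0)e_\ell)\le\frac{1}{2m}\ \forall\ell\}$ increase to a set of probability $1$ as $n\to\infty$ — they are increasing because $d$ is nonincreasing along the dynamics — so some finite $n$ already has positive probability. Everything after that is bookkeeping with $d(\cdot)$, Borel--Cantelli, and a countable intersection over $(s,\ell)$.
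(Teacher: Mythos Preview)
Your proposal is correct and follows essentially the same route as the paper: extract a deterministic block length on which the product contracts the spread $d(\cdot)$ with positive probability (via Lemma~\ref{lemma:uniformdec}), use the i.i.d.\ structure to make successive blocks independent and identically distributed, apply the second Borel--Cantelli lemma to get infinitely many contracting blocks almost surely, and finally intersect over the countably many starting times $s$ to obtain ergodicity.

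One correction to your framing: the claim that $\mathscr{C}$ is a tail event is not right. Consensus from time $0$ can be decided by $W(0)$ alone (take $W(0)=ev^T$), so $\mathscr{C}\notin\sigma(W(1),W(2),\ldots)$ in general; the paper in fact \emph{derives} the 0--1 law for $\mathscr{C}$ as a corollary of this theorem rather than assuming it. Fortunately your argument never actually needs $\prob(\mathscr{C})=1$: the events $A_n=\{d(\Phi(n,0)e_\ell)\le\frac{1}{2m}\ \forall\ell\}$ increase to a set containing $\mathscr{C}$, so $\prob(\mathscr{C})>0$ already yields $p_0=\prob(A_n)>0$ for some $n$, and from there Borel--Cantelli gives $\prob(\mathscr{E})=1$ directly. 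This is exactly how the paper proceeds, working throughout with an arbitrary $p=\prob(\mathscr{C})\in(0,1]$ and obtaining $\prob(K<N_1)\ge p/2$. So drop the tail-event claim for $\mathscr{C}$ and the reduction ``$\prob(\mathscr{C})>0\Rightarrow\prob(\mathscr{E})>0$ plus 0--1 law''; your own final paragraph already delivers $\prob(\mathscr{E})=1$ outright.
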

\begin{proof}
Since $\mathscr{E}\subseteq\mathscr{C}$, the assertion is true
when consensus occurs with probability $0$. Therefore, it suffices to show that
if the consensus occurs with a probability $p$ other than 0, 
the two events are almost surely equal.
Let $\prob(\mathscr{C})=p$ with $p\in(0,1]$. Then, 
for all $\o\in\mathscr{C}$,  
\[\lim_{k\to\infty} d(x(k))(\o)=0\quad\hbox{where \ }
d(x)=\max_i x_i-\min_j x_j,\]
and $\{x(k)\}(\omega)$ is the sequence
generated by the dynamic system~\eqref{eqn:dynsys} with any 
$x(0)\in\mathbb{R}^m$. 

For every $\ell\in[m]$, let $\{x^\ell(k)\}$ be the sequence generated by 
the dynamic system in~\eqref{eqn:dynsys} with $x(0)=e_\ell$.
Then, for any $\o\in\mathscr{C}$,
there is the smallest integer $K^\ell(\o)\ge0$ such that 
\[d(x^\ell(k))(\o)\le \frac{1}{2m}
\qquad\hbox{for all }k\ge K^\ell(\o).\]
Note that $d(x^\ell(k))(\o)$
is a nonincreasing sequence (of $k$) for each $\ell\in[m]$. Hence, by letting 
$K(\o)=\max_{\ell\in[m]}K^\ell(\o)$ we obtain 
$d(x^\ell(k))(\o)\le \frac{1}{2m}$
for all $\ell\in[m]$ and $k\ge K(\o)$. Thus, by applying 
Lemma~\ref{lemma:uniformdec}, we have for 
almost all $\o\in\mathscr{C}$, 
\begin{equation}\label{eqn:dkd0}
d(x(k))(\o)\le \frac{1}{2},
\end{equation}
for all $k\ge K(\o)$ and 
$x(0)\in [0,1]^m$. By the definition of consensus, we have
$\lim_{N\to\infty} \prob(K\le N)\ge\prob(\mathscr{C})=p$.
Thus, by the continuity of the measure, there exists an integer $N_1$
such that $\prob(K< N_1)\ge \frac{p}{2}.$

Now, let time $T\geq 0$ be arbitrary, and 
let $l_k^T$ denote the $N_1$-tuple 
of the matrices $W(s)$ driving the system~\eqref{eqn:dynsys}
for $s=T+N_1k,\ldots, T+N_1(k+1)-1$ and $k\ge0$,
i.e.,
\begin{align*}
l^T_k&=\Bigl( W(T+N_1 k), W(T+N_1k +1),\cr 
&\qquad \ldots,W(T+N_1(k+1)-1)\Bigr)
\quad \hbox{for all $k\ge0$}.
\end{align*}
Let $L_{N}$ denote the collection of all $N$-tuples
$(A_1,\ldots,A_N)$ of matrices $A_i\in\mathbb{S}^m$, $i\in[N]$ 
such that for $x(N)=A_NA_{N-1}\cdots A_1x(0)$ with $x(0)\in[0,1]^m$, 
we have $d(x(N))\le\frac{1}{2}.$
By the definitions of $l^T_k$ and $L_N$, 
relation~\eqref{eqn:dkd0} and relation
$\prob(K< N_1)\ge \frac{p}{2}$ state that
$\prob(\{l^T_0\in L_{N_1}\})\ge\frac{p}{2}.$ 
By the i.i.d.~property of the model,
the events $\{l^T_k\in L_{N_1}\}$, $k\ge0$, are i.i.d.\ and the probability of 
their occurrence is equal to $\prob(\{l^T_0\in L_{N_1}\})$, implying that 
$\prob(\{l^T_k\in L_{N_1}\})\ge\frac{p}{2}$ for all $k\ge0$. 
Consequently, $\sum_{k=0}^\infty \prob(\{l^T_k\in L_{N_1}\})=\infty.$
Since the events $\{l^T_k\in L_{N_1}\}$ are~i.i.d.,
by Borel-Cantelli lemma 
$\prob(\{\o\in\Omega\mid \o\in \{l^T_k\in L_{N_1}\}\  i.o.\})=1,$
where $i.o.$~stands for infinitely often. 
Observing that the event 
$\{\o\in\Omega\mid \o\in \{l^T_k\in L_{N_1}\}\  i.o.\}$ is contained in
the consensus event for the chain $\{W(T+k)\}_{k\geq 0}$,
we see that the consensus event for the chain $\{W(T+k)\}_{k\geq 0}$
occurs almost surely. Since this is true for arbitrary $T\ge0$ 
it follows that the chain $\{W(k)\}$ is ergodic {\it a.s.},
implying $\mathscr{C}\subseteq\mathscr{E}$ {\it a.s.}
This and the inclusion $\mathscr{E}\subseteq\mathscr{C}$ yield
$\mathscr{C}=\mathscr{E}$ {\it a.s.} 
\end{proof}

Theorem~\ref{thm:consensus01} extends the 
equivalence result between the consensus and ergodicity for i.i.d.~models 
given in Theorem~3.a and Theorem~3.b of \cite{Tahbaz-Salehi08} 
(and hence Corollary~4 in \cite{Tahbaz-Salehi08}), which are established there
assuming that the  matrices have positive diagonal entries almost surely. 
The relations among $\mathscr{C}$, $\mathscr{E}$, and $\mathscr{F}$
for i.i.d.\ case are illustrated in Figure~\ref{fig:events_01law}.

\subsection{0-1 Laws}\label{sec:0-1laws}
In this section, we discuss 0-1 laws for the events 
$\mathscr{E}$, $\mathscr{F}$ and $\mathscr{C}$ for independent random models.
The 0-1 laws specify the trivial (or 0-1) events, which are 
the events occurring with  either probability~0 or~1. 
The ergodicity event is a 0-1 event, 
as shown\footnote{Even though 
the result there was stated assuming a more restrictive random model, 
the  proof itself relies only on the independence property of the model.}
in~\cite{Tahbaz-Salehi08}, Lemma~1. 
Since the ergodicity event is always contained in the
consensus event, the ergodicity event $\mathscr{E}$ occurs with probability 0
whenever the consensus event $\mathscr{C}$ occurs with a probability 
$p\in(0,1)$. In other words, we may have $\prob(\mathscr{E})=1$ 
only if $\prob(\mathscr{C})=1$.

We next show that the infinite flow is also a 0-1 event. 
\begin{lemma}\label{lemma:flow_trivial}
For an independent random model, the infinite flow event $\mathscr{F}$ 
is a 0-1 event. 
\end{lemma}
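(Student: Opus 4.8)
The plan is to express $\mathscr{F}$ as a countable intersection of tail events of the independent sequence $\{W(k)\}$ and then invoke the Kolmogorov 0-1 law. Recall from~\eqref{eqn:infflow} that $\mathscr{F}=\bigcap_{S\subset[m]}\{\sum_{k=0}^\infty W_S(k)=\infty\}$, where the intersection runs over the finitely many nontrivial subsets $S\subset[m]$. Since a finite intersection of 0-1 events is again a 0-1 event, it suffices to show that for each fixed nontrivial $S$, the event $\mathscr{F}_S:=\{\sum_{k=0}^\infty W_S(k)=\infty\}$ is trivial.

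The key observation is that $\mathscr{F}_S$ is a tail event. For any fixed nontrivial $S\subset[m]$, the quantity $W_S(k)=\sum_{i\in S,\,j\in\bar S}(W_{ij}(k)+W_{ji}(k))$ is a (bounded, nonnegative) measurable function of $W(k)$ alone, so $W_S(k)$ is measurable with respect to the sigma-algebra $\sigma(W(k))$. For any $N\ge0$, convergence or divergence of the series $\sum_{k=0}^\infty W_S(k)$ is unaffected by the first $N$ terms, since each term is finite; hence
\[
\mathscr{F}_S=\left\{\sum_{k=N}^\infty W_S(k)=\infty\right\}\in\sigma\bigl(W(N),W(N+1),\ldots\bigr)
\]
for every $N$. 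Therefore $\mathscr{F}_S$ lies in the tail sigma-algebra $\bigcap_{N\ge0}\sigma(W(N),W(N+1),\ldots)$. Since the model is independent, the sigma-algebras $\sigma(W(k))$ are independent, and Kolmogorov's 0-1 law (see e.g.~\cite{Durrett}) applies: $\prob(\mathscr{F}_S)\in\{0,1\}$. Intersecting over the finitely many nontrivial $S$ gives $\prob(\mathscr{F})\in\{0,1\}$.

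I do not anticipate a serious obstacle here; the only points requiring care are the measurability of $W_S(k)$ with respect to $\sigma(W(k))$ (immediate, since $W_S(k)$ is a continuous function of the entries of $W(k)$) and the justification that truncating finitely many finite terms does not change whether the partial sums diverge (also immediate). The one mild subtlety is that the intersection in~\eqref{eqn:infflow} is over subsets $S$, which one should note is a \emph{finite} index set — there are $2^m-2$ nontrivial subsets — so no issue of an uncountable intersection arises, and the finite-intersection-of-trivial-events step is legitimate.
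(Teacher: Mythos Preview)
Your proposal is correct and follows essentially the same approach as the paper: identify each $\{\sum_{k=0}^\infty W_S(k)=\infty\}$ as a tail event of the independent sequence, apply Kolmogorov's 0-1 law, and then intersect over the finitely many nontrivial $S\subset[m]$. Your version is slightly more detailed in justifying the tail-event claim and the finiteness of the intersection, but the argument is the same.
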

\begin{proof}
For a nontrivial $S\subset[m]$, the sequence $\{W_S(k)\}$ of 
undirected flows across the cut $(S,\bar S)$ (see Eq.~(\ref{eqn:flowWS})) 
is a sequence of independent (finitely valued) random variables. 
The event $\left\{\sum_{k=0}^{\infty}W_S(k)=\infty\right\}$ is a tale 
event and, by Kolmogorov's 0-1 law (\cite{Durrett}, page 61), 
this event is a 0-1 event. 
Since there are finitely many nontrivial sets $S\subset [m]$, the event 
$\mathscr{F}=\bigcap_{S\subset[m]}\left\{\sum_{k=0}^{\infty}W_S(k)
=\infty\right\}$ is also a 0-1 event. 
\end{proof}

While both events $\mathscr{E}$ and $\mathscr{F}$ are trivial for an 
independent model, 
the situation is not the same for the consensus event~$\mathscr{C}$. 
In particular, by Example~\ref{ex:cons_nontriv} where $p\in(0,1)$, we see that 
the consensus event need not assume 0-1 law since it can occur with a 
probability $p\in(0,1)$. 

However, the situation is very different for i.i.d.\ models.
In particular,  in this case the consensus event is also a trivial event, 
as seen in the following lemma.

\begin{lemma}
For an i.i.d.\ random model, the consensus event $\mathscr{C}$ is a 0-1 event. 
\end{lemma}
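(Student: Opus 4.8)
The plan is to combine Theorem~\ref{thm:consensus01} with the already-established 0-1 laws. By Theorem~\ref{thm:consensus01}, for an i.i.d.\ model we have $\mathscr{C}=\mathscr{E}$ almost surely. The ergodicity event $\mathscr{E}$ is a 0-1 event for any independent model (this was recorded above, citing \cite{Tahbaz-Salehi08}, Lemma~1, and an i.i.d.\ model is in particular independent). Hence $\prob(\mathscr{E})\in\{0,1\}$, and since $\prob(\mathscr{C})=\prob(\mathscr{E})$ by the almost-sure equality, we conclude $\prob(\mathscr{C})\in\{0,1\}$, i.e.\ $\mathscr{C}$ is a 0-1 event. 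That is essentially the whole argument.

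More explicitly, first I would invoke Theorem~\ref{thm:consensus01} to obtain a set $R$ with $\prob(R)=1$ such that $\mathscr{C}\cap R=\mathscr{E}\cap R$. This gives $\prob(\mathscr{C})=\prob(\mathscr{C}\cap R)=\prob(\mathscr{E}\cap R)=\prob(\mathscr{E})$. Next I would quote the fact (stated in the paragraph preceding Lemma~\ref{lemma:flow_trivial}) that $\mathscr{E}$ is a 0-1 event for independent models; since an i.i.d.\ model is independent, $\prob(\mathscr{E})$ is either $0$ or $1$. Combining the two, $\prob(\mathscr{C})\in\{0,1\}$.

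There is essentially no obstacle here: the theorem is an immediate corollary of results already proved in the excerpt, and the only thing to be careful about is not re-proving the 0-1 law for $\mathscr{E}$ from scratch but simply citing it, and making sure the ``almost surely'' in $\mathscr{C}=\mathscr{E}$ a.s.\ is correctly translated into an equality of probabilities. If one wanted a self-contained argument avoiding the citation for $\mathscr{E}$, one could alternatively derive the 0-1 law for $\mathscr{C}$ directly in the i.i.d.\ setting using a tail-triviality/exchangeability argument: the consensus event for the shifted chain $\{W(T+k)\}_{k\ge0}$ has the same probability as that for $\{W(k)\}_{k\ge0}$ by the i.i.d.\ property, and the consensus event is contained (up to a null set) in every such tail event, which forces its probability to $0$ or $1$ by Kolmogorov's 0-1 law. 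But the short route via Theorem~\ref{thm:consensus01} and the known triviality of $\mathscr{E}$ is cleaner, so I would present that.
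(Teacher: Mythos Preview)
Your proposal is correct and takes essentially the same approach as the paper: the paper's proof is a one-line appeal to Theorem~\ref{thm:consensus01} ($\mathscr{E}=\mathscr{C}$ a.s.\ for i.i.d.\ models) together with the already-stated fact that $\mathscr{E}$ is a trivial event. Your additional care in unpacking the a.s.\ equality into $\prob(\mathscr{C})=\prob(\mathscr{E})$ is fine but more detail than the paper gives.
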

\begin{proof}
The result follows from the fact that  $\mathscr{E}$ is a trivial
event and Theorem~\ref{thm:consensus01}, which states that 
$\mathscr{E}=\mathscr{C}$ almost surely for i.i.d.\ models. 
\end{proof}

Figure~\ref{fig:events_01law} illustrates the 0-1 laws of
$\mathscr{E},$ $\mathscr{C}$ 
and $\mathscr{F}$ for an i.i.d.\ model.
Our next example demonstrates that the inclusion relation in 
Figure~\ref{fig:events_01law} can be strict. 

\begin{figure}
 \begin{center}
  \includegraphics[width=0.4\columnwidth]{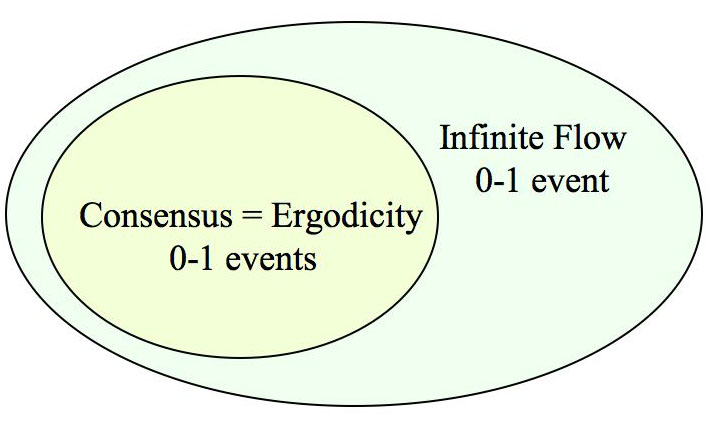}
 \end{center}\vspace{-0.4cm}
\caption{\small The ergodicity and consensus coincide {\it a.s.},
and all three events assume 0-1 law for an i.i.d\ model.}
\label{fig:events_01law}\vspace{-0.7cm}
\end{figure}

\begin{example}\label{ex:cons_empty}
Consider the independent identical random model where 
each $W(k)$ is equally likely to be any of the $m\times m$ 
permutation matrices. Then, in view of the uniform distribution, 
we have $\EXP{W(k)}=\frac{1}{m}\,ee^T$ for all $k$. 
Hence, by Theorem~\ref{thm:expectedflow}, 
it follows that the infinite flow event $\mathscr{F}$ 
is happening almost surely. 
But, since the chain $\{W(k)\}$ is a sequence of permutation matrices, 
the consensus event $\mathscr{C}$ never happens. $\square$
\end{example}

\subsection{Random Model and Its Expected Model}\label{sec:expected}
Here, we investigate the properties of an independent random model and 
its corresponding expected model. We establish two results
in forthcoming Theorems~\ref{thm:expectedflow} and~\ref{thm:equivalency}
that later on play an important role in the establishment the Infinite
Flow Theorem in Section~\ref{sec:infiniteflow}.
The first result shows the equivalence
of the infinite flow property for a random chain $\{W(k)\}$ and its
expected chain $\{\EXP{W(k)}\}$, as given in the following theorem.
 
\begin{theorem}\label{thm:expectedflow}
Let $\{W(k)\}$ be an independent random model.
Then, the model has infinite flow property
if and only if the expected model has infinite flow  property.
\end{theorem}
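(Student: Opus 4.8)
The plan is to prove the two directions separately, exploiting the fact that for an independent model the infinite flow event is a $0$-$1$ event (Lemma~\ref{lemma:flow_trivial}), so it suffices to compare the probabilities $\prob\!\left(\sum_{k}W_S(k)=\infty\right)$ with the corresponding deterministic sum $\sum_k \EXP{W_S(k)}$ for each fixed nontrivial $S\subset[m]$. Since the random variables $W_S(k)$ are nonnegative, bounded (each lies in $[0,2m]$, say, because the entries of a stochastic matrix are in $[0,1]$), and independent, this is exactly the situation where the divergence of $\sum_k W_S(k)$ a.s.\ is governed by the divergence of $\sum_k\EXP{W_S(k)}$.

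For the ``expected model has infinite flow $\Rightarrow$ model has infinite flow'' direction, fix a nontrivial $S$ and suppose $\sum_{k=0}^\infty \EXP{W_S(k)}=\infty$. I would invoke a second Borel--Cantelli-type argument: because the $W_S(k)$ are independent and uniformly bounded, a standard lemma (a consequence of the Borel--Cantelli lemmas, cf.~\cite{Durrett}) gives that $\sum_k W_S(k)<\infty$ a.s.\ would force $\sum_k \EXP{\min\{W_S(k),1\}}<\infty$, hence $\sum_k \EXP{W_S(k)}<\infty$ after accounting for the bounded range --- a contradiction. Concretely, one can split according to whether $\sum_k \prob(W_S(k)>\tfrac{1}{2m})=\infty$ or $\sum_k \prob(W_S(k)\le \tfrac{1}{2m})\cdot\EXP{W_S(k)\mid W_S(k)\le\tfrac{1}{2m}}$ carries the divergence; in the first case Borel--Cantelli (independent form) makes $W_S(k)>\tfrac1{2m}$ infinitely often a.s., so the sum diverges; in the second case one bounds $\EXP{W_S(k)\1_{\{W_S(k)\le 1/(2m)\}}}$ below by a constant times $\prob(W_S(k)>0)$-type quantities, or more simply uses Kolmogorov's three-series theorem to conclude $\sum_k W_S(k)=\infty$ a.s. Intersecting over the finitely many $S$ preserves the a.s.\ statement, so the model has infinite flow.

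For the converse, ``model has infinite flow $\Rightarrow$ expected model has infinite flow,'' I argue contrapositively: if the expected model fails infinite flow, there is a nontrivial $S$ with $\sum_{k=0}^\infty \EXP{W_S(k)}<\infty$. Then by the monotone convergence theorem $\EXP{\sum_{k=0}^\infty W_S(k)}=\sum_k\EXP{W_S(k)}<\infty$, so $\sum_k W_S(k)<\infty$ a.s., and in particular the infinite flow event (which requires $\sum_k W_S(k)=\infty$ for \emph{every} nontrivial $S$) has probability zero. Hence the model does not have infinite flow. This direction is the soft one and needs no independence.

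The main obstacle is the first direction, where I must make the ``divergence of expectations forces a.s.\ divergence'' step airtight despite $W_S(k)$ not being indicator variables --- the clean Borel--Cantelli converse applies to events, not to bounded nonnegative variables. The cleanest route is Kolmogorov's three-series theorem applied to the truncated variables $W_S(k)\1_{\{W_S(k)\le c\}}$: if $\sum_k W_S(k)$ converged a.s., all three series (of truncated means, truncated variances, and tail probabilities) would converge, and combining the convergence of $\sum_k \prob(W_S(k)>c)$ with the convergence of $\sum_k\EXP{W_S(k)\1_{\{W_S(k)\le c\}}}$ and the boundedness $W_S(k)\le 2m$ yields $\sum_k\EXP{W_S(k)}<\infty$, the desired contradiction. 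I would present this as a short self-contained lemma (independent, uniformly bounded, nonnegative $X_k$ with $\sum_k\EXP{X_k}=\infty$ implies $\sum_k X_k=\infty$ a.s.) and then apply it coordinatewise in $S$.
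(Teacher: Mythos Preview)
Your proposal is correct and follows essentially the same approach as the paper: the paper also uses the monotone convergence theorem for the ``model $\Rightarrow$ expected model'' direction (equivalently, your contrapositive), and Kolmogorov's three-series theorem together with the $0$-$1$ law (Lemma~\ref{lemma:flow_trivial}) for the reverse direction. The paper's argument is terser---it does not detour through the Borel--Cantelli splitting you sketch before settling on three-series---and it uses the slightly sharper bound $W_S(k)\le m$ rather than $2m$, but the substance is identical.
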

\begin{proof}
Let $S\subset[m]$ be nontrivial. 
Since the model is independent, the random variables $W_S(k)$ are independent.
By the definition of $W_S(k)$ and the stochasticity of $W(k)$,
we have $0\leq W_S(k)\leq \sum_{i=1}^m\sum_{j=1}^mW_{ij}(k)=m$ for all $k\ge0$.
Thus, by monotone convergence theorem (\cite{Durrett}, page 225), 
the infinite flow property of the model implies the infinite flow property of 
the expected model. On the other hand,
since the model is independent and $0\leq W_S(k)\leq m$, 
by Kolmogorov's three-series theorem (\cite{Durrett}, page 64) it follows that:
if $\sum_{k=0}^\infty \EXP{W_S(k)}=\infty$, then 
$\prob\left(\sum_{k=0}^{\infty}W_S(k)=\infty\right)>0$.
Since $\sum_{k=0}^{\infty}W_S(k)$ is a trivial event, we have
$\prob\left(\sum_{k=0}^{\infty}W_S(k)=\infty\right)=1$. 
Thus, since $S\subset[m]$ is arbitrary, 
the model has infinite flow property. 
\end{proof}

There is no analog of Theorem~\ref{thm:expectedflow} for the ergodicity 
or consensus event, unless additional assumptions are imposed.
However, a weaker result holds as seen in the following. 

\begin{lemma}\label{lemma:experg}
Let $\{W(k)\}$ be an independent model and assume
that the model admits consensus (is ergodic).
Then, the expected model $\{\EXP{W(k)}\}$ reaches a consensus (is ergodic). 
\end{lemma}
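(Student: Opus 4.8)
The plan is to work with the deterministic characterization of consensus and ergodicity: reaching a consensus for $z(k+1)=G(k)z(k)$ is equivalent to $\lim_{k\to\infty}\Phi(k,t)e_\ell = \gamma_\ell(t)\,e$ for every $t\ge 0$ and $\ell\in[m]$. So it suffices to show that if the random model admits consensus (is ergodic), then the deterministic chain $\{\EXP{W(k)}\}$ satisfies this limit condition. First I would fix $t\ge 0$ and $\ell\in[m]$, let $\{x(k)\}$ be the random sequence generated by $x(k+1)=W(k)x(k)$ with $x(t)=e_\ell$, and observe that by linearity $x(k) = \Phi_W(k,t)e_\ell$ where $\Phi_W(k,t)=W(k)\cdots W(t)$. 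The key point is that, since the model is independent, $\EXP{x(k)} = \EXP{W(k)}\cdots\EXP{W(t)}e_\ell = \Phi_{\EXP{W}}(k,t)e_\ell$; that is, the expected state trajectory is exactly the trajectory driven by the expected chain.

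Next I would use that each $x(k)\in[0,1]^m$ almost surely (entrywise, by stochasticity of the $W(k)$'s, since $x(t)=e_\ell\in[0,1]^m$ and stochastic matrices map $[0,1]^m$ into itself), so $\{x(k)\}$ is a uniformly bounded sequence of random vectors. If the model admits consensus, then on an event of probability one there is a scalar $c(\omega)$ with $x(k)(\omega)\to c(\omega)\,e$; moreover $c(\omega)\in[0,1]$, so $c$ is an integrable random variable. By the bounded (dominated) convergence theorem, $\EXP{x(k)}\to \EXP{c}\,e$, i.e. $\Phi_{\EXP{W}}(k,t)e_\ell \to \EXP{c}\,e$. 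Setting $\gamma_\ell(t):=\EXP{c}$ and letting $t,\ell$ vary gives exactly the condition that the expected chain reaches a consensus.

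For the ergodicity statement the argument is the same with one extra bookkeeping step. If the model is ergodic, then for each fixed $t\ge 0$ and $j\in[m]$ there is, almost surely, a scalar $g_j(t)(\omega)$ with $\Phi_{W,ij}(k,t)(\omega)\to g_j(t)(\omega)$ for all $i$; equivalently the trajectory from $x(t)=e_j$ converges to $g_j(t)(\omega)\,e$. Applying the bounded-convergence argument above with $\ell=j$ shows $\Phi_{\EXP{W},ij}(k,t)\to \EXP{g_j(t)}$ for every $i\in[m]$, which is precisely ergodicity of $\{\EXP{W(k)}\}$ with limiting values $g_j(t):=\EXP{g_j(t)(\omega)}$.

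The only genuine subtlety — the ``hard part'' — is making sure the interchange of limit and expectation is legitimate and that the exceptional null sets do not accumulate: one needs the convergence $x(k)(\omega)\to c(\omega)e$ to hold on a single probability-one event, which is exactly what ``admits consensus'' (resp. ``is ergodic'') provides, and then boundedness in $[0,1]^m$ supplies the dominating function for each of the finitely many choices of $(t,\ell)$. Everything else — that $\EXP{W(k)}$ is stochastic, that the expectation factors over the independent $W(k)$'s, and that it suffices to check the limit on the basis vectors $e_\ell$ — is routine given the tools already set up in the excerpt.
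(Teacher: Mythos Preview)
Your argument is correct and is the standard one: factor $\EXP{W(k-1)\cdots W(t)}=\EXP{W(k-1)}\cdots\EXP{W(t)}$ by independence, then pass the almost-sure limit $x(k)\to c(\omega)e$ through the expectation via bounded convergence using $x(k)\in[0,1]^m$. The paper does not actually give its own proof of this lemma---it simply cites \cite{Tahbaz-Salehi08,Zampieri}, remarking that the proofs there rely only on independence---and your write-up is precisely that argument.

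One small slip in your framing: your opening sentence says consensus is equivalent to $\lim_{k\to\infty}\Phi(k,t)e_\ell=\gamma_\ell(t)e$ for \emph{every} $t\ge0$, but that is the characterization of ergodicity; consensus only requires $t=0$. You handle the two cases correctly afterward, so this is a wording issue rather than a gap. Also, with the paper's convention $\Phi(k,t)=W(k)\cdots W(t)$ and the dynamics $x(k+1)=W(k)x(k)$, one has $x(k)=\Phi_W(k-1,t)e_\ell$ rather than $\Phi_W(k,t)e_\ell$; this off-by-one is harmless for the limit argument.
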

The proof\footnote{Assuming more restrictive assumptions on the model, 
the result of the lemma was stated in~\cite{Tahbaz-Salehi08} (Theorem~3, 
$(a)\Rightarrow(b)$) and~\cite{Zampieri} (Remark~3.3). 
However, the proofs there rely only on the 
independence of the model.} of Lemma~\ref{lemma:experg} can be found in 
\cite{Tahbaz-Salehi08,Zampieri}.

The following theorem states another important result for later use. 
As a consequence of Lemma~\ref{lemma:experg}, and 
Theorems~\ref{thm:infiniteflow} and~\ref{thm:expectedflow},
the result provides an equivalent deterministic 
characterization of the ergodicity for a class of independent 
models. 

\begin{theorem}\label{thm:equivalency}
Let $\{W(k)\}$ be an independent random model 
such that $\mathscr{E}=\mathscr{F}$ almost surely. 
Then, the model is ergodic 
if and only if the expected model is ergodic.
\end{theorem}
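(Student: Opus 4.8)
The plan is to prove the two directions of the equivalence separately, using the deterministic facts already assembled. Write $\bar W(k)=\EXP{W(k)}$ for the expected chain, and note that each $\bar W(k)\in\bS^m$ since $\bS^m$ is convex. For the forward direction, suppose the model is ergodic, i.e.\ $\prob(\mathscr{E})=1$. By Lemma~\ref{lemma:experg} applied directly, the expected chain $\{\bar W(k)\}$ is ergodic, and we are done. So the forward implication is essentially immediate and requires only invoking Lemma~\ref{lemma:experg}.

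The substantive direction is the converse: assume the expected model $\{\bar W(k)\}$ is ergodic; we must show $\prob(\mathscr{E})=1$. First I would apply Theorem~\ref{thm:infiniteflow} to the deterministic ergodic chain $\{\bar W(k)\}$ to conclude that $\{\bar W(k)\}$ has the infinite flow property; that is, $\sum_{k=0}^\infty \bar W_S(k)=\infty$ for every nontrivial $S\subset[m]$. Equivalently, the model has expected infinite flow. Next, since $W_S(k)$ is an affine (indeed linear) functional of the entries of $W(k)$, we have $\EXP{W_S(k)}=\bar W_S(k)$, so $\sum_{k=0}^\infty\EXP{W_S(k)}=\infty$ for every nontrivial $S$. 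Now invoke Theorem~\ref{thm:expectedflow}: for an independent model, the expected model having infinite flow is equivalent to the model itself having the infinite flow property almost surely, i.e.\ $\prob(\mathscr{F})=1$. Finally, the hypothesis of the theorem is precisely that $\mathscr{E}=\mathscr{F}$ almost surely, so $\prob(\mathscr{E})=\prob(\mathscr{F})=1$, which says the model is ergodic. Chaining these four invocations closes the converse.

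Assembling the two directions then gives the stated equivalence. The main point to be careful about is the logical structure rather than any hard estimate: all three ingredients (Lemma~\ref{lemma:experg}, Theorem~\ref{thm:infiniteflow}, Theorem~\ref{thm:expectedflow}) are already available, and the only genuinely new observation needed is the trivial identity $\EXP{W_S(k)}=\bar W_S(k)$ together with the recognition that ``the expected chain is ergodic'' feeds into Theorem~\ref{thm:infiniteflow} (which is stated for deterministic ergodic chains) to yield expected infinite flow. The one subtlety worth a sentence in the write-up is that Theorem~\ref{thm:infiniteflow}'s deterministic conclusion $\sum_k \bar W_S(k)=\infty$ is exactly the definition of expected infinite flow, so no further argument is needed to pass from ergodicity of $\{\bar W(k)\}$ to the hypothesis of Theorem~\ref{thm:expectedflow}. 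I do not anticipate any real obstacle; the proof is a short deduction from the previously established results, and the bulk of the work was already done in Theorems~\ref{thm:infiniteflow} and~\ref{thm:expectedflow}.
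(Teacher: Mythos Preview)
Your proposal is correct and follows essentially the same route as the paper's own proof: the forward direction via Lemma~\ref{lemma:experg}, and the converse by chaining Theorem~\ref{thm:infiniteflow} (applied to the deterministic expected chain), Theorem~\ref{thm:expectedflow}, and the hypothesis $\mathscr{E}=\mathscr{F}$ a.s. The only difference is that you spell out the identity $\EXP{W_S(k)}=\bar W_S(k)$ explicitly, whereas the paper leaves it implicit.
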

\begin{proof}
If the ergodicity event is almost sure,
then by Lemma~\ref{lemma:experg}, the expected model is ergodic.  
For the converse statement, let the chain $\{\EXP{W(k)}\}$ be
ergodic. Then, by Theorem~\ref{thm:infiniteflow} 
the chain $\{\EXP{W(k)}\}$ has infinite flow. Therefore, 
by Theorem~\ref{thm:expectedflow} the infinite flow event $\mathscr{F}$ 
is almost sure, and since $\mathscr{E}=\mathscr{F}$ {\it a.s.}, 
the ergodicity event is almost sure.
\end{proof}

\section{Model with Feedback Property and Steady State in Expectation} 
\label{sec:geometry}
In this section, we discuss two  properties of a random model 
that are important in the development of our main results in 
Section~\ref{sec:infiniteflow}. In particular, we introduce and 
study a model with feedback properties and a model with a common steady state 
in expectation. Stronger forms of these properties have always been used 
when establishing consensus both for deterministic and random models.
Here, we provide some new fundamental insights into these properties. 

\subsection{Feedback Properties}\label{sec:feedback}
We define several types of feedback property. 
Recall that $W^i$ denotes the $i$th column vector of a matrix $W$.
\begin{definition}
A random model $\{W(k)\}$ has 
\textit{strong feedback property} if there exists 
$\gamma>0$ such that 
\[W_{ii}(k)\geq \gamma\qquad
\hbox{a.s. for all $k\geq 0$ and all $i\in [m]$}.\]
The model has \textit{feedback property} 
if there exists $\gamma>0$ such that 
\[\EXP{W_{ii}(k)W_{ij}(k)}\geq \gamma\, \EXP{W_{ij}(k)},\]
for all $k\geq 0$, and all $i,j\in [m]$ with $i\not =j$. 
The model has \textit{weak feedback property} 
if there exists $\gamma>0$ such that 
\[\EXP{\left( W^i(k)\right)^TW^j(k)}\geq \gamma 
\left(\EXP{W_{ij}(k)}+\EXP{W_{ji}(k)}\right)\]
for all $k\geq 0$, and all $i,j\in [m]$ with $i\not =j$.
 The scalar $\gamma$ is referred to as a {\it feedback constant}.
\end{definition}

While the difference between feedback and strong feedback property
is apparent, the difference between weak feedback and feedback property
may not be so obvious. The following example illustrates the difference
between these concepts.

\begin{example}\label{exam:feed}
Consider the static deterministic chain $\{A(k)\}$:
\[A(k) = A =\left[\begin{array}{ccc}
0&\frac{1}{2}&\frac{1}{2}\\
\frac{1}{2}&0&\frac{1}{2}\\
\frac{1}{2}&\frac{1}{2}&0
\end{array}\right]\qquad\mbox{for $k\geq 0$}.\]
Since $A_{ii}A_{ij}=0$ and $A_{ij}\ne0$
for all $i\ne j$, the model does not have feedback property.
At the same time, since $A_{ij}+A_{ji}=1$ for $i\ne j$, it follows
$(A^i)^T A^j=\frac{1}{4}=\frac{1}{4}(A_{ij}+A_{ji})$. Thus, 
$\{A(k)\}$ has weak feedback property with $\gamma=\frac{1}{4}$.  $\square$
\end{example}

It can be seen that strong feedback property implies feedback property, 
which in turn implies weak feedback property.
The deterministic consensus and averaging models 
in \cite{Tsitsiklis84,Tsitsiklis86,Olshevsky09,Olshevsky09a}
require that the matrices have non-zero diagonal entries and 
uniformly bounded non-zero entries, which is
more restrictive than the strong feedback property.

We next show that the feedback property of a random model implies 
the strong feedback property of its expected model. 

\begin{lemma}\label{lemma:expstrong}
Let a random model $\{W(k)\}$ have feedback property with 
constant $\gamma$. Then, its expected model $\{\EXP{W(k)}\}$ has strong 
feedback property with $\frac{\gamma}{m}.$
\end{lemma}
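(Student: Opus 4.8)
The plan is to unpack the definitions and show that $\EXP{W(k)}$ has all diagonal entries bounded below by $\gamma/m$. Fix $k\ge0$ and $i\in[m]$; write $\ol W = \EXP{W(k)}$, so that $\ol W_{ii}=\EXP{W_{ii}(k)}$. The feedback property gives, for every $j\ne i$, the inequality $\EXP{W_{ii}(k)W_{ij}(k)}\ge \gamma\,\EXP{W_{ij}(k)}$. Since $W_{ii}(k)\le 1$ almost surely (as $W(k)$ is stochastic), we also have $\EXP{W_{ii}(k)W_{ij}(k)}\le \EXP{W_{ij}(k)}$; but more usefully, applying the feedback inequality with $j=i$ is not allowed, so I will instead sum the off-diagonal inequalities and combine with the row-stochasticity of $W(k)$.

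The key computation is as follows. First I would bound $\gamma\,(1-\ol W_{ii}) = \gamma\sum_{j\ne i}\EXP{W_{ij}(k)} = \sum_{j\ne i}\gamma\,\EXP{W_{ij}(k)} \le \sum_{j\ne i}\EXP{W_{ii}(k)W_{ij}(k)} = \EXP{W_{ii}(k)\sum_{j\ne i}W_{ij}(k)} = \EXP{W_{ii}(k)(1-W_{ii}(k))}$, where the last equality uses $\sum_{j}W_{ij}(k)=1$ a.s. Since $W_{ii}(k)\in[0,1]$ a.s., we have $W_{ii}(k)(1-W_{ii}(k))\le W_{ii}(k)$, hence $\EXP{W_{ii}(k)(1-W_{ii}(k))}\le \EXP{W_{ii}(k)}=\ol W_{ii}$. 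Combining, $\gamma(1-\ol W_{ii})\le \ol W_{ii}$, which rearranges to $\ol W_{ii}\ge \gamma/(1+\gamma)$.

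This already gives a bound, and since $\gamma/(1+\gamma)\ge \gamma/2 \ge \gamma/m$ for $m\ge 2$ (and the $m=1$ case is trivial since the only stochastic matrix is $I$), we are done — in fact with the sharper constant $\gamma/(1+\gamma)$. Alternatively, one can use $W_{ii}(k)(1-W_{ii}(k))\le \tfrac14$ to get $\ol W_{ii}\ge \gamma(1-1/(1+\ldots))$, but the route above is cleanest. Since $k$ and $i$ were arbitrary, $\{\EXP{W(k)}\}$ has strong feedback property with constant $\gamma/(1+\gamma)\ge\gamma/m$.

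The main obstacle, such as it is, is purely bookkeeping: one must be careful that the feedback inequality is only assumed for $i\ne j$, so the diagonal term must be recovered from row-stochasticity ($\sum_{j\ne i}W_{ij}(k)=1-W_{ii}(k)$) rather than from a direct assumption, and one must use $W_{ii}(k)\le 1$ a.s.\ to pass from $\EXP{W_{ii}(k)(1-W_{ii}(k))}$ to $\EXP{W_{ii}(k)}$. There is no analytic difficulty. If one wants exactly the constant $\gamma/m$ as stated (rather than the better $\gamma/(1+\gamma)$), it suffices to note $1+\gamma \le 1 + 1 \le m$ whenever $m\ge 2$, using $\gamma\le 1$ which itself follows from the feedback inequality at any fixed $i\ne j$ with $\EXP{W_{ij}(k)}>0$ together with $\EXP{W_{ii}(k)W_{ij}(k)}\le\EXP{W_{ij}(k)}$.
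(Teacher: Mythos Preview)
Your proof is correct and actually yields a sharper constant than the paper's argument. The paper proceeds by a pigeonhole step: since the row sums of $\EXP{W(k)}$ equal $1$, some entry $\EXP{W_{ij^*}(k)}$ is at least $1/m$; if $j^*\ne i$, the inequality $\EXP{W_{ii}(k)}\ge\EXP{W_{ii}(k)W_{ij^*}(k)}\ge\gamma\,\EXP{W_{ij^*}(k)}\ge\gamma/m$ (using $W_{ij^*}(k)\le1$) delivers the bound. You instead sum the feedback inequality over all $j\ne i$ and exploit row-stochasticity inside the expectation to obtain $\gamma(1-\ol W_{ii})\le\EXP{W_{ii}(k)(1-W_{ii}(k))}\le\ol W_{ii}$, hence $\ol W_{ii}\ge\gamma/(1+\gamma)$. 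Your route avoids the pigeonhole loss of a factor $m$ and gives a dimension-free constant; the paper's route is slightly more direct in that it uses the feedback inequality at a single index rather than summing. One small remark: your reduction from $\gamma/(1+\gamma)$ to $\gamma/m$ via $\gamma\le1$ tacitly assumes some $\EXP{W_{ij}(k)}>0$; in the degenerate case where every off-diagonal expectation vanishes one has $\ol W_{ii}=1$ directly, so the conclusion still holds (or fails only when the lemma's stated constant $\gamma/m$ exceeds $1$, which is a defect of the statement rather than of your argument).
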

\begin{proof}
Let the model have feedback property with a constant $\g$. 
Then, by the definition of the feedback property,  
we have $\EXP{W_{ii}(k)W_{ij}(k)}\geq \gamma\, \EXP{W_{ij}(k)}$
for any $k\geq 0$ and $i,j\in [m]$ with $i\not=j$. 
Since $W_{ij}(k)\leq 1$, it follows that
\[\EXP{W_{ii}(k)}\geq \EXP{W_{ii}(k)W_{ij}(k)}\geq \gamma\, \EXP{W_{ij}(k)}\] 
for all $i,j\in[m], i\ne j$. 
The matrices $W(k)$ are stochastic, so that we have 
$\sum_{j=1}^m\EXP{W_{ij}(k)}=1$. Hence, for every $k\ge0$ and $i\in[m]$,
there exists an index $j^*$ (the dependence on $k$ and $i$ is suppressed) 
such that $\EXP{W_{ij^*}(k)}\geq \frac{1}{m}$. If $j^*=i$, then we are done;
otherwise we have $\EXP{W_{ii}(k)}\geq \gamma\ \EXP{W_{ij^*}(k)}\geq 
\frac{\gamma}{m}$
for all $i\in[m]$. Hence, the expected chain has the strong feedback property 
with constant $\frac{\gamma}m$. 
\end{proof}

We now focus on an independent model. We have the following result. 

\begin{lemma}\label{lemma:feedback}
Consider an independent model $\{W(k)\}$. 
Suppose that the model is such that there is an $\eta>0$ 
with the following property: for all $k\geq 0$ and $i,j\in [m]$ with $i\ne j$,
\[\EXP{W_{ij}(k)}>0\quad\implies\quad
\EXP{W_{ii}(k)W_{ij}(k)}\geq \eta,\]
or the following property:\\ for all $k\geq 0$ and $i,j\in [m]$ with $i\ne j$,
\[\EXP{W_{ij}(k)}>0\quad\implies\quad
\EXP{\left(W^i(k)\right)^TW^j(k)}\geq \eta.\] 
Then, respectively, the model has feedback property with constant $\eta$ 
or weak feedback property with constant $\eta/2$. 
\end{lemma}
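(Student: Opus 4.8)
The plan is to bound the expectations appearing in the definitions of feedback and weak feedback property from below, using the hypothesis together with the fact that there are only finitely many time-independent ``patterns'' of which expected entries are zero. The key observation is that, because the matrices $W(k)$ live in $\bS^m$ (finite dimension), the quantities $\EXP{W_{ij}(k)}$ are bounded, and so the stated implication controls the relevant second moments whenever $\EXP{W_{ij}(k)}$ is positive, while the inequality is vacuous (both sides handled separately) when $\EXP{W_{ij}(k)}=0$.

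First I would treat the feedback case. Fix $k\ge0$ and $i\ne j$. If $\EXP{W_{ij}(k)}=0$, then since $W_{ij}(k)\ge0$ a.s.\ we have $W_{ij}(k)=0$ a.s., hence $W_{ii}(k)W_{ij}(k)=0$ a.s.\ as well, so $\EXP{W_{ii}(k)W_{ij}(k)}=0=\gamma\,\EXP{W_{ij}(k)}$ holds for any $\gamma$. If instead $\EXP{W_{ij}(k)}>0$, the hypothesis gives $\EXP{W_{ii}(k)W_{ij}(k)}\ge\eta$. On the other hand $W_{ij}(k)\le1$ a.s., so $\EXP{W_{ij}(k)}\le1$, and therefore $\EXP{W_{ii}(k)W_{ij}(k)}\ge\eta\ge\eta\,\EXP{W_{ij}(k)}$. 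Combining the two cases, the model has feedback property with constant $\eta$.

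The weak feedback case is analogous but requires handling the symmetric pair of entries. Fix $k\ge0$ and $i\ne j$. If $\EXP{W_{ij}(k)}=\EXP{W_{ji}(k)}=0$, then $W_{ij}(k)=W_{ji}(k)=0$ a.s., and since $(W^i(k))^TW^j(k)=\sum_{\ell=1}^m W_{\ell i}(k)W_{\ell j}(k)\ge0$, the desired inequality holds trivially. Otherwise at least one of $\EXP{W_{ij}(k)}$, $\EXP{W_{ji}(k)}$ is positive; the second property in the hypothesis (which is symmetric in $i,j$ since $(W^i)^TW^j=(W^j)^TW^i$) then yields $\EXP{(W^i(k))^TW^j(k)}\ge\eta$. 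Since $W_{ij}(k),W_{ji}(k)\le1$ a.s.\ gives $\EXP{W_{ij}(k)}+\EXP{W_{ji}(k)}\le2$, we conclude $\EXP{(W^i(k))^TW^j(k)}\ge\eta\ge\frac{\eta}{2}\bigl(\EXP{W_{ij}(k)}+\EXP{W_{ji}(k)}\bigr)$, i.e.\ the model has weak feedback property with constant $\eta/2$.

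I do not expect a genuine obstacle here; the only point requiring a little care is that the hypothesis only triggers when $\EXP{W_{ij}(k)}>0$, so the degenerate case $\EXP{W_{ij}(k)}=0$ must be disposed of separately — and this is immediate from nonnegativity of the entries. Note also that independence of the model is not actually used in the argument; it is stated as a standing assumption but the bounds above are entrywise and hold for any random chain in $\bS^m$. If one wished, one could remark that the same conclusion therefore holds without the independence hypothesis, but I would keep the statement as given for consistency with the surrounding development.
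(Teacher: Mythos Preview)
Your proof is correct and follows essentially the same approach as the paper: split into the cases $\EXP{W_{ij}(k)}=0$ (trivial) and $\EXP{W_{ij}(k)}>0$, then use $\EXP{W_{ij}(k)}\le 1$ to convert the absolute lower bound $\eta$ into the multiplicative bound required by the feedback definitions. The paper only writes out the feedback case and says the weak feedback case ``uses the same line of argument''; you spell that case out explicitly (including the symmetry $(W^i)^TW^j=(W^j)^TW^i$ and the bound $\EXP{W_{ij}}+\EXP{W_{ji}}\le 2$ that produces the factor $1/2$), and you correctly note that independence is never invoked.
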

\begin{proof}
We prove only the case of feedback property, since  
the other case uses the same line of argument. 
If $\EXP{W_{ij}(k)}=0$ for some $k$ and $i,j$ with $i\ne j$, then 
the relation $\EXP{W_{ii}(k)W_{ij}(k)}\geq \eta\, \EXP{W_{ij}(k)}$
is satisfied trivially (with both sides equal to zero).
If $\EXP{W_{ij}(k)}>0$, then by the assumption of the lemma, we have
$\EXP{W_{ij}(k)W_{ii}(k)}\geq \eta$.
Furthermore, since $1\ge W_{ij}(k)$ for all $i,j$ and $k$, it follows
that
$\EXP{W_{ij}(k)W_{ii}(k)}\geq \eta\geq \eta\,\EXP{W_{ij}(k)},$
thus showing that 
the model has feedback property with constant $\eta$. 
\end{proof}

The i.i.d.\ models $\{W(k)\}$ with almost surely positive diagonal
entries $W_{ii}(k)$ have been studied in 
\cite{Zampieri,Tahbaz-Salehi08,Tahbaz-Salehi09}. Such models
have feedback property as seen in the following  corollary.

\begin{corollary}\label{cor:TahbazFeedback} If $\{W(k)\}$ is an 
i.i.d.\ model with almost surely positive diagonal entries,
then the model has feedback property 
with constant\\
\centerline{ $\gamma=
\min_{\{i\ne j\mid \EXP{W_{ij}(k)}>0\}}\ \EXP{W_{ii}(k)W_{ij}(k)}$.}
\end{corollary}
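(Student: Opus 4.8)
The plan is to apply Lemma~\ref{lemma:feedback} directly, so the work is reduced to verifying that an i.i.d.\ model with almost surely positive diagonal entries satisfies the hypothesis of that lemma with the stated constant $\gamma$. First I would note that by the i.i.d.\ assumption the distribution of $W(k)$ does not depend on $k$, so every expectation $\EXP{W_{ij}(k)}$ and $\EXP{W_{ii}(k)W_{ij}(k)}$ is independent of $k$; this is what makes the minimum $\gamma=\min_{\{i\ne j\mid \EXP{W_{ij}(k)}>0\}}\EXP{W_{ii}(k)W_{ij}(k)}$ well-defined (it is over a finite, nonempty index set, provided at least one off-diagonal expectation is positive; if all off-diagonal expectations vanish the feedback condition is vacuous and any $\gamma>0$ works).

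Next I would check positivity of $\gamma$. Fix $i\ne j$ with $\EXP{W_{ij}(k)}>0$. Since $W_{ij}(k)\ge0$ and $\EXP{W_{ij}(k)}>0$, the event $\{W_{ij}(k)>0\}$ has positive probability. On that event $W_{ii}(k)>0$ almost surely by the positive-diagonal hypothesis, hence $W_{ii}(k)W_{ij}(k)>0$ on a set of positive probability, and since $W_{ii}(k)W_{ij}(k)\ge0$ always, we get $\EXP{W_{ii}(k)W_{ij}(k)}>0$. As this holds for each of the finitely many admissible pairs $(i,j)$, the minimum $\gamma$ is strictly positive.

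Finally I would invoke Lemma~\ref{lemma:feedback}: with $\eta=\gamma$, the implication ``$\EXP{W_{ij}(k)}>0\implies\EXP{W_{ii}(k)W_{ij}(k)}\ge\eta$'' holds for all $k\ge0$ and all $i\ne j$ by the very definition of $\gamma$ as the minimum over such pairs. The lemma then yields that the model has feedback property with constant $\eta=\gamma$, which is exactly the claim. I do not anticipate any real obstacle here: the only points requiring a word of care are the well-definedness of the finite minimum (handled by i.i.d.) and its strict positivity (handled by the positive-diagonal hypothesis together with nonnegativity of the entries), both of which are routine.
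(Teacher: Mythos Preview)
Your proposal is correct and follows essentially the same route as the paper: verify that $\EXP{W_{ii}(k)W_{ij}(k)}>0$ whenever $\EXP{W_{ij}(k)}>0$, use the i.i.d.\ assumption to make the minimum time-independent, and then invoke Lemma~\ref{lemma:feedback} with $\eta=\gamma$. If anything, your argument is slightly more careful than the paper's, since you spell out the vacuous case and the reason positivity of $\EXP{W_{ii}(k)W_{ij}(k)}$ requires $\prob(W_{ij}(k)>0)>0$ and not just $W_{ii}(k)>0$ a.s.
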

\begin{proof}
Let $\EXP{W_{ij}(k)}>0$ for some $i,j\in[m]$. Since $W_{ii}(k)>0$ {\it a.s.} 
and $W_{ij}(k)\geq 0$, we have $\EXP{W_{ii}(k)W_{ij}(k)}>0$. 
Define $\eta=\min_{\{i\ne j\mid \EXP{W_{ij}(k)}>0\}}\ 
\EXP{W_{ii}(k)W_{ij}(k)}$. 
Since the model is i.i.d., the constant $\eta$ 
is independent of time. Hence, by Lemma~\ref{lemma:feedback} 
it follows that the model has feedback property with constant $\eta$.
\end{proof}

\subsection{Steady State in Expectation}\label{sec:steadystate}
Here, we consider a model with another special property.
Specifically, we discuss a random model $\{W(k)\}$ such that 
its expected chain $\{\EXP{W(k)}\}$ has a common steady state.
\begin{definition} 
A random model has a {\it common steady state in expectation} 
if there is a stochastic vector 
$\pi\in\mathbb{R}^m$ such that $\pi^TE[W(k)]=\pi^T$ for~all~$k$.
\end{definition}
For example, the $m\times m$ matrices that are doubly stochastic
in expectation satisfy the preceding definition with $\pi=\frac{1}{m}\, e$,
such as the matrices arising in a randomized broadcast or gossip over a 
connected (static) network \cite{Aysal09,Boyd06}. 

Consider the function given by
\begin{equation}\label{eqn:lyapunov}
V(x)=\sum_{i=1}^{m}\pi_i\left(x_i-\pi^Tx\right)^2 
\qquad\hbox{for $x\in\re^m$}.
\end{equation}
The function $V(x)$ measures the weighted spread of  
the vector $x$ entries with respect to the weighted average value $\pi^Tx$. 

We at first study the behavior of the weighted averages $\pi^Tx(k)$ 
along the sequence $\{x(k)\}$. 
The main observation is that the random scalar sequence $\{\pi^Tx(k)\}$ is 
a bounded martingale, which leads us to the following result. 
 
\begin{lemma}\label{lemma:martingale}
Let $\{W(k)\}$ be an independent random model with a common steady state 
$\pi$ in expectation. 
Then, the sequence $\{\pi^Tx(k)\}$ converges almost surely for any 
\hbox{$x(0)\in \mathbb{R}^m$}. 
\end{lemma}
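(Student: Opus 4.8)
The plan is to show that $\{\pi^Tx(k)\}$ is a martingale with respect to the natural filtration $\mathscr{F}_k = \sigma(W(0),\ldots,W(k-1))$, and then invoke the martingale convergence theorem. First I would observe that by the recursion $x(k+1)=W(k)x(k)$ we have $\pi^Tx(k+1)=\pi^TW(k)x(k)$. Since the model is independent, $W(k)$ is independent of $\mathscr{F}_k$ while $x(k)$ is $\mathscr{F}_k$-measurable; therefore
\[
\EXP{\pi^Tx(k+1)\mid \mathscr{F}_k}=\EXP{\pi^TW(k)\mid\mathscr{F}_k}x(k)=\pi^T\EXP{W(k)}x(k)=\pi^Tx(k),
\]
where the last equality uses the common steady state in expectation, $\pi^T\EXP{W(k)}=\pi^T$. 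This establishes the martingale property.

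Next I would verify boundedness so that the convergence theorem applies. Since each $W(k)$ is stochastic, every entry of $x(k)$ stays in the interval $[\min_i x_i(0),\max_i x_i(0)]$ for all $k$ (a convex-combination argument, as already used in the proof of Lemma~\ref{lemma:flowmaxminS}). Because $\pi$ is a stochastic vector, $\pi^Tx(k)$ is itself a convex combination of the entries of $x(k)$ and hence also lies in $[\min_i x_i(0),\max_i x_i(0)]$. Thus $\{\pi^Tx(k)\}$ is a uniformly bounded martingale.

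Finally, by the martingale convergence theorem (see \cite{Durrett}), a bounded martingale converges almost surely, which gives that $\{\pi^Tx(k)\}$ converges almost surely for any $x(0)\in\mathbb{R}^m$. The only mild subtlety — hardly an obstacle — is making sure the conditional expectation $\EXP{\pi^TW(k)\mid\mathscr{F}_k}$ is handled correctly entrywise: one writes $\pi^Tx(k+1)=\sum_{i,j}\pi_i W_{ij}(k) x_j(k)$, pulls the $\mathscr{F}_k$-measurable factors $x_j(k)$ out of the conditional expectation, uses independence to replace $\EXP{W_{ij}(k)\mid\mathscr{F}_k}$ by $\EXP{W_{ij}(k)}$, and then recognizes $\sum_i \pi_i\EXP{W_{ij}(k)}=\pi_j$. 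Everything else is routine.
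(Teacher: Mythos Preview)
Your proposal is correct and follows essentially the same approach as the paper: establish that $\{\pi^Tx(k)\}$ is a bounded martingale with respect to the natural filtration (using independence and $\pi^T\EXP{W(k)}=\pi^T$), then apply the martingale convergence theorem. The paper's proof is more terse, but the argument is identical.
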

\begin{proof}
By the model independency and  $\pi^T\EXP{W(k)}=\pi^T$, 
it  follows that the process $\{\pi^Tx(k)\}$ is a martingale 
with respect to the natural filtration of the process 
for any initial $x(0)\in \mathbb{R}^m$.
Since the matrices $W(k)$ are stochastic, the sequence $\{x(k)\}$ is bounded. 
Thus, $\{\pi^Tx(k)\}$ is a bounded martingale. 
By the martingale convergence theorem 
(see \cite{Billingsley1995}, Theorem~35.5), 
the sequence $\{\pi^Tx(k)\}$ converges a.s. 
\end{proof}

We next characterize the limit of the martingale $\{\pi^Tx(k)\}$. 
Let $\lambda_{i,\pi}\in\mathbb{R}$ be the limit of the martingale
$\{\pi^Tx(k)\}$ for the initial state $x(0)=e_i$, and 
let $\lambda_\pi$ be the vector defined by 
\begin{align}\label{eqn:martingalelimit}
\l_\pi&=(\lambda_{1,\pi},\ldots,\lambda_{m,\pi})\\\nonumber 
&\hbox{with }
\lambda_{i,\pi}=\lim_{k\to\infty}\pi^T W(k)\cdots W(0)e_i 
\ \ \hbox{for $i\in[m]$}.
\end{align}

In the following lemma, we provide some properties of the 
random vector $\lambda_\pi$.
\begin{lemma}\label{lemma:hyperplanes}
Let $\{W(k)\}$ be an independent model with a common steady state $\pi$
in expectation.
Then, the random~vector $\lambda_\pi$ has the following properties:
\begin{enumerate}
\item[(a)] $\lim_{k\to\infty}\pi^Tx(k)=\lambda^T_\pi x(0)$ a.s.\ 
for any $x(0)\in\R^m$. 
\item [(b)] $\lambda_\pi$ is a stochastic vector.
\item [(c)] $\EXP{\lambda_\pi}=\pi$.
\item [(d)] For every $x(0)\in\mathbb{R}^m$, 
the limit points of the sequence $\{x(k)\}$ lie in the random hyperplane 
$\mathcal{H}_{\pi,\lambda_\pi^Tx(0)}
=\{x\in\mathbb{R}^m \mid \pi^Tx=\lambda_\pi^Tx(0)\}$ almost surely.
\end{enumerate}
\end{lemma}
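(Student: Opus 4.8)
The plan is to derive everything from Lemma~\ref{lemma:martingale} and the definition~\eqref{eqn:martingalelimit} of $\lambda_\pi$, exploiting linearity of the dynamics in the initial condition. For part (a), observe that for fixed $\omega$ (outside a null set) the map $x(0)\mapsto \pi^Tx(k)(\omega)=\pi^T W(k-1)\cdots W(0)x(0)$ is linear in $x(0)$, so writing $x(0)=\sum_{i=1}^m x_i(0)\,e_i$ gives $\pi^Tx(k)=\sum_{i=1}^m x_i(0)\,\bigl(\pi^TW(k-1)\cdots W(0)e_i\bigr)$. By Lemma~\ref{lemma:martingale} each of the finitely many scalar sequences $\{\pi^TW(k-1)\cdots W(0)e_i\}$ converges a.s.; intersecting the $m$ almost-sure events, all of them converge simultaneously a.s., and their limits are precisely the $\lambda_{i,\pi}$ by~\eqref{eqn:martingalelimit}. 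Taking $k\to\infty$ in the finite linear combination yields $\lim_k \pi^Tx(k)=\sum_i x_i(0)\lambda_{i,\pi}=\lambda_\pi^Tx(0)$ a.s.

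For part (b), I would apply part (a) with the specific choices $x(0)=e$ and $x(0)=e_i$. Since every $W(k)$ is stochastic, the consensus vector $e$ is fixed: $W(k-1)\cdots W(0)e=e$, hence $\pi^Tx(k)=\pi^Te=1$ for all $k$, so $\lambda_\pi^Te=\sum_i\lambda_{i,\pi}=1$ a.s. For nonnegativity, with $x(0)=e_i$ the sequence $\{x(k)\}$ stays in $[0,1]^m$ by stochasticity of the $W(k)$'s (as already noted in the proof of Lemma~\ref{lemma:flowmaxminS}), so $\pi^Tx(k)\in[0,1]$ for all $k$, and passing to the limit gives $\lambda_{i,\pi}=\lim_k\pi^Tx(k)\ge 0$ a.s. Thus $\lambda_\pi$ is stochastic a.s. For part (c), I take expectations in the defining relation: for each $i$, $\pi^TW(k-1)\cdots W(0)e_i$ is the $k$-th term of the martingale $\{\pi^Tx(k)\}$ started at $e_i$, so its expectation is constant in $k$ and equals $\pi^TW(-1)\cdots$—more precisely, the martingale property gives $\EXP{\pi^Tx(k)}=\pi^Tx(0)=\pi^Te_i=\pi_i$ for all $k$. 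Since the martingale is bounded (values in $[0,1]$), it converges in $L^1$ as well as a.s., so $\EXP{\lambda_{i,\pi}}=\lim_k\EXP{\pi^Tx(k)}=\pi_i$, i.e.\ $\EXP{\lambda_\pi}=\pi$.

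Part (d) is then immediate from part (a): fix $x(0)\in\R^m$ and work on the almost-sure event where $\pi^Tx(k)\to\lambda_\pi^Tx(0)$. If $\hat x$ is any limit point of $\{x(k)\}$, say $x(k_n)\to\hat x$ along a subsequence, then by continuity of $y\mapsto\pi^Ty$ we get $\pi^T\hat x=\lim_n\pi^Tx(k_n)=\lambda_\pi^Tx(0)$, so $\hat x\in\mathcal{H}_{\pi,\lambda_\pi^Tx(0)}$. I do not expect a serious obstacle here; the one point requiring a little care is the bookkeeping of null sets in (a)—making sure the single exceptional set works simultaneously for all coordinate initial conditions $e_i$ (hence, by linearity, for all $x(0)$), which is handled simply because a finite union of null sets is null. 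Upgrading a.s.\ convergence to $L^1$ convergence in (c) uses only uniform boundedness of the martingale, which is automatic from stochasticity.
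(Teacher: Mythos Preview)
Your proposal is correct and follows essentially the same route as the paper's proof: linearity in the initial condition for (a), the choice $x(0)=e$ together with nonnegativity for (b), the constant-expectation martingale property plus boundedness for (c), and continuity of $y\mapsto\pi^Ty$ on subsequential limits for (d). The only cosmetic difference is that the paper argues $\lambda_{i,\pi}\ge0$ directly from the nonnegativity of $\pi^TW(k-1)\cdots W(0)$, whereas you pass through $\pi^Tx(k)\in[0,1]$ when $x(0)=e_i$; these are equivalent observations.
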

\begin{proof}
By Lemma~\ref{lemma:martingale} and 
the definition of $\lambda_{i\pi}$, we have
$\lim_{k\to\infty}\pi^Tx(k)= \lambda_{i\pi}$ almost surely for initial state
$e_i$.
Using the linearity of the system in~\eqref{eqn:dynsys}, 
we obtain for any $x(0)\in\re^m$,
\begin{align}\nonumber
\lim_{k\to\infty}\pi^Tx(k)&=\sum_{i=1}^m
x_i(0) \left(\lim_{k\to\infty}\pi^T W(k-1)\cdots W(0)e_i \right)\cr 
&=\sum_{i=1}^m x_i(0) \lambda_{i,\pi}= \lambda^T_\pi x(0),
\end{align}
thus showing part (a).

Since $x(k)=W(k-1)\cdots W(0)x(0)$, by part (a) it follows 
\hbox{$\lim_{k\to\infty}\pi^TW(k-1)\cdots W(0)=\lambda^T_\pi$.}
The matrices $W(k)$ and the vector $\pi$ have nonnegative entries
implying that the vector $\lambda_\pi$ also has nonnegative entries. 
By letting $x(0)=e$ and using the stochasticity of $W(k)$, 
we have $x(k)=e$ for all $k$, implying $1=\pi^Tx(k)$ for all $k$. 
Thus, we have
$1=\lim_{k\to\infty}\pi^Tx(k)=\lambda_{\pi}^Tx(0)=\lambda_{\pi}^Te,$
where the second equality holds by part (a).
Hence, $\lambda_{\pi}$ is a stochastic vector. 

To show part (c), we note that by the martingale property of 
the process $\{\pi^Tx(k)\}$, we have $\EXP{\pi^Tx(k)}=\pi^Tx(0)$ 
for all $k\ge0$ and $x(0)\in\mathbb{R}^m$.
By the boundedness of the martingale, we have 
$\lim_{k\to\infty}\EXP{\pi^Tx(k)}=\EXP{\lambda_\pi^T}x(0)$ 
for any $x(0)\in\mathbb{R}^m$. The preceding two relations imply 
$\EXP{\lambda_\pi}=\pi$.  

For part (d), we note that the sequence $\{x(k)\}$ is bounded for every 
$x(0)\in\mathbb{R}^m$ by the stochasticity of $W(k)$;
thus, it 
has accumulation points.
By part (a), each accumulation point $x^*$ of the sequence
satisfies $\pi^T x^* =\lambda_\pi^Tx(0)$ a.s.
\end{proof}

We now focus on the sequence $\{V(x(k))\}$. We show that it
is a convergent supermartingale, which indicates that 
$V(x)$ is a stochastic Lyapunov function for the random system
in~\eqref{eqn:dynsys}.

\begin{theorem}\label{thm:supermartingale}
Let the random model $\{W(k)\}$ be independent with a common steady 
state~$\pi$ in expectation. Then, we almost surely have for all $k\ge0$, 
\begin{align}\label{eqn:supermartingale}
&\EXP{V(x(k+1))\mid x(k)}\\\nonumber
&\qquad\le V(x(k))- \sum_{i<j}H_{ij}(k)\left(x_i(k)-x_j(k)\right)^2,
\end{align}
where $H(k)=\EXP{W^T(k)DW(k)}$.
Furthermore, $\{V(x(k))\}$ converges 
almost surely.
\end{theorem}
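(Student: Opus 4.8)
The plan is to express $V$ as a difference of two elementary quadratic forms and then read off the drift inequality from the ``conditional variance'' structure that row-stochasticity forces on each $W(k)$. First I would rewrite $V$: since $\pi$ is a stochastic vector, expanding the square in~\eqref{eqn:lyapunov} gives
\[
V(x)=\sum_{i=1}^m\pi_i x_i^2-(\pi^Tx)^2=x^TDx-(\pi^Tx)^2,\qquad D=diag(\pi).
\]
Hence along the trajectory of~\eqref{eqn:dynsys} we have $V(x(k+1))=x(k)^TW^T(k)DW(k)x(k)-(\pi^TW(k)x(k))^2$. Because the model is independent, $W(k)$ is independent of $x(k)$, which is $\sigma(W(0),\dots,W(k-1))$-measurable; so in $\EXP{\,\cdot\mid x(k)}$ the vector $x(k)$ may be treated as frozen while we average over $W(k)$.

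For the first term I would use the key algebraic identity. For each row index $\ell$ the entries $W_{\ell 1}(k),\dots,W_{\ell m}(k)$ form a probability distribution, so the elementary identity $\mathrm{E}[Y^2]-(\mathrm{E}[Y])^2=\sum_{i<j}p_ip_j(x_i-x_j)^2$ for a random variable $Y$ taking value $x_j$ with probability $p_j=W_{\ell j}(k)$ yields
\[
\Bigl(\textstyle\sum_j W_{\ell j}(k)x_j(k)\Bigr)^2=\sum_j W_{\ell j}(k)x_j(k)^2-\sum_{i<j}W_{\ell i}(k)W_{\ell j}(k)\bigl(x_i(k)-x_j(k)\bigr)^2 .
\]
Multiplying by $\pi_\ell$, summing over $\ell$, taking $\EXP{\,\cdot\mid x(k)}$, and using $\pi^T\EXP{W(k)}=\pi^T$ together with $H(k)=\EXP{W^T(k)DW(k)}$, I obtain
\[
\EXP{x(k)^TW^T(k)DW(k)x(k)\mid x(k)}=x(k)^TDx(k)-\sum_{i<j}H_{ij}(k)\bigl(x_i(k)-x_j(k)\bigr)^2 .
\]
It is precisely here that the steady-state-in-expectation hypothesis enters; note that $\pi^TW(k)=\pi^T$ holds only after taking the expectation, not pathwise. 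For the subtracted term, $\pi^TW(k)x(k)$ is a scalar whose conditional mean is $\EXP{\pi^TW(k)\mid x(k)}x(k)=\pi^Tx(k)$, again by the steady-state property, so Jensen's inequality gives $\EXP{(\pi^TW(k)x(k))^2\mid x(k)}\ge(\pi^Tx(k))^2$. Combining the two estimates,
\[
\EXP{V(x(k+1))\mid x(k)}\le x(k)^TDx(k)-(\pi^Tx(k))^2-\sum_{i<j}H_{ij}(k)\bigl(x_i(k)-x_j(k)\bigr)^2=V(x(k))-\sum_{i<j}H_{ij}(k)\bigl(x_i(k)-x_j(k)\bigr)^2,
\]
which is~\eqref{eqn:supermartingale}.

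Finally, for convergence I would observe that $V\ge0$ since $\pi\ge0$, and $H_{ij}(k)=\EXP{\sum_\ell\pi_\ell W_{\ell i}(k)W_{\ell j}(k)}\ge0$, so the drift bound gives $\EXP{V(x(k+1))\mid x(k)}\le V(x(k))$; moreover $\{x(k)\}$ is bounded by stochasticity of the $W(k)$, hence each $V(x(k))$ is bounded and integrable. Thus $\{V(x(k))\}$ is a nonnegative supermartingale with respect to the natural filtration of the process, and the supermartingale convergence theorem gives almost sure convergence. I expect the only real work to lie in the bookkeeping for the algebraic identity in the second step and in the careful use of $\pi^T\EXP{W(k)}=\pi^T$; there should be no analytic subtlety beyond that.
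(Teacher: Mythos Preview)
Your proof is correct and follows the same overall architecture as the paper: write $V(x)=x^TDx-(\pi^Tx)^2$, handle the $(\pi^Tx)^2$ part via Jensen and the steady-state hypothesis, and invoke nonnegative supermartingale convergence at the end. The one genuine difference is in how you obtain the identity
\[
\EXP{x(k)^TW^T(k)DW(k)x(k)\mid x(k)}=x(k)^TDx(k)-\sum_{i<j}H_{ij}(k)\bigl(x_i(k)-x_j(k)\bigr)^2.
\]
You apply the variance identity $(\sum_jp_jx_j)^2=\sum_jp_jx_j^2-\sum_{i<j}p_ip_j(x_i-x_j)^2$ \emph{pathwise} to each row of $W(k)$, then multiply by $\pi_\ell$, sum, and only afterward take the conditional expectation and use $\pi^T\EXP{W(k)}=\pi^T$. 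The paper instead takes the conditional expectation first to produce the matrix $H(k)$, then proves the algebraic identity $x^T(H(k)-D)x=-\sum_{i<j}H_{ij}(k)(x_i-x_j)^2$ by deriving the row-sum relation $H_{ii}(k)=\pi_i-\sum_{j\ne i}H_{ij}(k)$ from stochasticity of $W(k)$ together with $\pi^T\EXP{W(k)}=\pi^T$. The two computations are equivalent, but yours is cleaner: it makes transparent that the $-\sum_{i<j}H_{ij}(k)(x_i-x_j)^2$ term is nothing but the $\pi$-weighted expected conditional variance of the row averages, whereas the paper's matrix manipulation somewhat obscures this interpretation.
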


\begin{proof} 
By using $D=diag(\pi)$, from the definition of the function $V(x)$
in~\eqref{eqn:lyapunov} we have
\[V(x)=x^T(I-\pi e^T)D(I-e\pi^T)x=x^T(D-\pi\pi^T)x,\]
where the second equality is obtained by using 
$e^TD=\pi^T$, $De=\pi$, and $\pi^Te=1$.
In view of $x(k+1)=W(k)x(k)$, it follows that for all $k\ge0$,
\begin{align}\label{eqn:vxk}
V(x(k+1))&=x(k+1)^T(D-\pi\pi^T)x(k+1) 
\cr &= x(k)W(k)^T(D-\pi\pi^T)W(k)x(k).
\end{align}
Since the model is independent, by taking the expectation 
conditioned on $x(k)$, we obtain
\[\EXP{V(x(k+1)) \mid x(k)}= x(k)^T\EXP{W(k)^T(D-\pi\pi^T)W(k)}x(k),\] 
almost surely for all $k\ge0$.
Since $H(k)=\EXP{W^T(k)DW(k)}$, we further have
\begin{align*}
&\EXP{V(x(k+1))\mid x(k)}\cr 
&=x(k)^TH(k)x(k) 
-\EXP{\left(\pi^TW(k) x(k)\right)^2\mid x(k)}\cr
& \le x(k)^TH(k)x(k) 
-\left(\EXP{\pi^TW(k)x(k)\mid x(k)}\right)^2,
\end{align*}
where the inequality follows by Jensen's inequality 
(see \cite{Durrett}, page 225)
and the convexity of the function $s\mapsto s^2$.
The expected matrices $\EXP{W(k)}$ have the same steady state $\pi$, implying 
that almost surely for all $k\geq 0$,
\[\EXP{\pi^TW(k)x(k)\mid x(k)}=\pi^T\EXP{W(k)}x(k)
=\pi^T x(k).\]
By combining the preceding two relations, we see that 
almost surely for all $k\ge0$,
\[\EXP{V(x(k+1)) \mid x(k)} \le x(k)^T\left( H(k)-\pi\pi^T \right) x(k).\]
By adding and subtracting $x(k)^TDx(k)$ to the right hand side of the preceding
relation and using~(\ref{eqn:vxk}), we obtain almost surely
\begin{align}\label{eqn:supermart}
&\EXP{V(x(k+1))\mid x(k)}\\\nonumber  
& \le x(k)^T\left( H(k)-D\right) x(k)
 + V(x(k))\ \ \hbox{ for all $k\ge0$}.
\end{align}

Now, we show that $x(k)^T\left( H(k)-D\right) x(k) 
=\sum_{i<j}H_{ij}(k) (x_i(k)-x_j(k))^2$.
By the definition of $H(k)$ we have
$H_{ij}(k)=\sum_{\ell=1}^{m}\EXP{\pi_\ell W_{\ell i}(k) W_{\ell j}(k)}$,
so that \hbox{for $i\in[m]$},
\begin{align*}
\sum_{j=1\atop j\not=i}^m H_{ij}(k)
&=\EXP{\sum_{j=1\atop j\not=i}^m
\sum_{\ell=1}^{m}\pi_\ell W_{\ell i}(k) W_{\ell j}(k)}
\cr 
&=\EXP{\sum_{\ell=1}^{m}\pi_\ell W_{\ell i}(k)
\sum_{j=1\atop j\not=i}^m W_{\ell j}(k)}.
\end{align*}
Since $W(k)$ is stochastic, we have
$\sum_{j=1, j\not=i}^m W_{\ell j}(k)=1-W_{\ell i}(k)$,
implying that
\begin{align*}
\sum_{j=1\atop j\not=i}^m H_{ij}(k)
&=\EXP{\sum_{\ell=1}^{m}\pi_\ell W_{\ell i}(k)}
-\EXP{\sum_{\ell=1}^{m}\pi_\ell W^2_{\ell i}(k)}\cr 
&=\pi_i-H_{ii}(k),
\end{align*}
where the last equality follows from $\EXP{\pi^TW(k)}=\pi^T$. 
Since $D=diag(\pi)$, the preceding relation yields
\begin{equation}\label{eqn:uno}
H_{ii}(k)=D_{ii}-\sum_{j=1\atop j\not=i}^m H_{ij}(k).
\end{equation} 
Therefore, for any $x\in\mathbb{R}^m$, we have
$x^T H(k)x=\sum_{i=1}^m x_i\sum_{j=1}^m H_{ij}(k)x_j$,
which can be further written as
\begin{align*}
x^T H(k)x 
&=\sum_{i=1}^m x_i \sum_{j=1\atop j\ne i}^m H_{ij}(k)x_j +
\sum_{i=1}^m x_i H_{ii}(k) x_i\cr 
&=\sum_{i=1}^m x_i \sum_{j=1\atop j\ne i}^m H_{ij}(k)(x_j -x_i)
+\sum_{i=1}^m x_i D_{ii} x_i,
\end{align*}
where the last equality follows from relation (\ref{eqn:uno}).
The matrix $H(k)=\EXP{W(k)^TDW(k)}$ is symmetric, so that
\[\sum_{i=1}^m x_i \sum_{j=1\atop j\ne i}^m H_{ij}(k)(x_j -x_i)
=-\sum_{i<j}H_{ij}(k)(x_i -x_j)^2,\]
implying that $x^T H(k)x =-\sum_{i<j} H_{ij}(k)(x_i -x_j)^2+x^TD x.$
Therefore, we have
\[x(k)^T \left(H(k)-D\right)x(k)=
-\sum_{i<j}H_{ij}(k)\left(x_i(k)-x_j(k)\right)^2.\]
By combining the preceding relation with~\eqref{eqn:supermart},
we conclude that relation~\eqref{eqn:supermartingale} holds a.s.\ 
for all $k\geq 0$. 

Since each $W(k)$ is a stochastic matrix and $\pi$ is stochastic vector, 
the matrix $H(k)$ has nonnegative entries for all $k$.
Hence, from the preceding relation it follows that $\{V(x(k))\}$ is a
supermartingale. The convergence of $\{V(x(k))\}$ 
follows straightforwardly from the nonnegative supermartingale convergence
(see \cite{Durrett}, (2.11) Corollary, page 236.)
\end{proof}

We conclude this section with another result 
for the weighted distance function $V(x)$. This result plays
crucial role in establishing our result in Section~\ref{sec:sufficient}.
\begin{lemma}\label{lemma:fundament}
Let $\pi\in\mathbb{R}^m$ be a stochastic vector, and let $x\in\mathbb{R}^m$
be such that $x_1\leq\cdots\leq x_m$. Then, 
we have
\begin{align*}
\frac{1}{(m-1)^2}\,V(x)\le \frac{1}{x_m-x_1}\,\sum_{i=1}^{m-1}(x_{i+1}-x_i)^3. 
\end{align*}
\end{lemma}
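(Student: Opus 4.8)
The plan is to bound $V(x)$ from above in terms of the consecutive gaps $x_{i+1}-x_i$, and to compare that bound against the sum of cubes of the same gaps, renormalized by the total spread $x_m-x_1$. First I would recall the identity established in the proof of Theorem~\ref{thm:supermartingale}, namely $V(x)=x^T(D-\pi\pi^T)x$ with $D=diag(\pi)$. Expanding this using $\sum_i\pi_i=1$ gives the convenient symmetric form $V(x)=\sum_{i<j}\pi_i\pi_j\,(x_i-x_j)^2$, which I would verify directly from $\sum_i\pi_i x_i^2-(\sum_i\pi_i x_i)^2$. Since each $\pi_i\ge0$ and $\sum_i\pi_i=1$, Cauchy--Schwarz (or the trivial bound $\pi_i\pi_j\le\frac14$, though the sharper route is $\sum_{i<j}\pi_i\pi_j\le\frac12$) lets me control $V(x)$ by $\max_{i<j}(x_i-x_j)^2$ times a constant. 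But the crux is to get the $(m-1)^2$ denominator and the cubes, so a cruder bound is not enough: I need to exploit the ordering $x_1\le\cdots\le x_m$ and write each difference $x_j-x_i$ as a telescoping sum $\sum_{\ell=i}^{j-1}(x_{\ell+1}-x_\ell)$ of nonnegative gaps $g_\ell:=x_{\ell+1}-x_\ell$.

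The key step is therefore: for any $i<j$, $(x_j-x_i)^2=\bigl(\sum_{\ell=i}^{j-1}g_\ell\bigr)^2\le (j-i)\sum_{\ell=i}^{j-1}g_\ell^2\le (m-1)\sum_{\ell=1}^{m-1}g_\ell^2$ by Cauchy--Schwarz and $j-i\le m-1$. Plugging this into $V(x)=\sum_{i<j}\pi_i\pi_j(x_i-x_j)^2$ and using $\sum_{i<j}\pi_i\pi_j\le\frac12\bigl(\sum_i\pi_i\bigr)^2=\frac12$ would give $V(x)\le\frac{m-1}{2}\sum_\ell g_\ell^2$, which is already off by roughly a factor $(m-1)$ from the target and, worse, has squares not cubes. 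To fix both issues simultaneously I would be more careful: bound $V(x)=\sum_{i<j}\pi_i\pi_j(x_i-x_j)^2$ and note that $(x_i-x_j)^2\le (x_m-x_1)\,(x_j-x_i)$ trivially since $x_j-x_i\le x_m-x_1$; hence $V(x)\le (x_m-x_1)\sum_{i<j}\pi_i\pi_j(x_j-x_i)=(x_m-x_1)\sum_{i<j}\pi_i\pi_j\sum_{\ell=i}^{j-1}g_\ell$. Swapping the order of summation, the coefficient of $g_\ell$ is $\sum_{i\le\ell<j}\pi_i\pi_j=\bigl(\sum_{i\le\ell}\pi_i\bigr)\bigl(\sum_{j>\ell}\pi_j\bigr)$, a product of a "mass to the left" and "mass to the right" that is at most $\frac14$. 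This still yields squares after recombining, so the genuinely delicate point — and the step I expect to be the main obstacle — is squeezing out the cube: one must show $(x_m-x_1)\cdot\bigl(\text{left mass at }\ell\bigr)\bigl(\text{right mass at }\ell\bigr)\cdot g_\ell$ is dominated, after dividing by $(x_m-x_1)^2$ and multiplying through, by $\frac{1}{(m-1)^2}$ times $\frac{1}{x_m-x_1}\sum_\ell g_\ell^3$, i.e. one needs $g_\ell^2/(x_m-x_1)^2\le$ something summable — which fails pointwise and must be handled via a global convexity/power-mean argument on the gap vector.

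Concretely, after the reductions above the inequality to prove reduces to a statement purely about the nonnegative numbers $g_1,\dots,g_{m-1}$ with $\sum_\ell g_\ell=x_m-x_1=:s$: namely that $\sum_{i<j}\pi_i\pi_j(x_i-x_j)^2\le\frac{s}{(m-1)^2}\sum_\ell g_\ell^3$. Since $(x_i-x_j)^2\le s^2$ and $\sum_{i<j}\pi_i\pi_j\le\frac12$, it suffices to show $\frac{s^2}{2}\le\frac{s}{(m-1)^2}\sum_\ell g_\ell^3$, i.e. $\sum_\ell g_\ell^3\ge\frac{(m-1)^2}{2}s\cdot s=\frac{(m-1)^2 s^2}{2}$ — but that is FALSE when the $g_\ell$ are spread out, so the factor $\sum_{i<j}\pi_i\pi_j\le\frac12$ is far too lossy and I must instead keep the $\pi$-weights and the left/right-mass structure and argue that whenever some gap $g_\ell$ is large relative to $s$, the weight $(\sum_{i\le\ell}\pi_i)(\sum_{j>\ell}\pi_j)$ multiplying the relevant differences is correspondingly controlled; the cube then emerges from pairing each unit of $g_\ell$ against the at most $g_\ell$ worth of difference it contributes and the at most $s$ total span, with the $(m-1)^2$ coming from there being at most $m-1$ gaps each appearing in at most $m-1$ pairs. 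I would carry this out by the telescoping/summation-exchange identity above, then apply the elementary inequality $ab\le\frac{1}{(m-1)^2}\cdot(\text{cubic majorant})$ gap-by-gap, summing at the end; the bookkeeping of which mass multiplies which gap is the only real work, and I expect the author's proof to do exactly this telescoping-plus-rearrangement computation.
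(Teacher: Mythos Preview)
Your proposal never reaches a proof, and the reason is a transcription error in the target. You write the goal as
\[
\sum_{i<j}\pi_i\pi_j(x_i-x_j)^2\le\frac{s}{(m-1)^2}\sum_\ell g_\ell^3,
\qquad s:=x_m-x_1,\ g_\ell:=x_{\ell+1}-x_\ell,
\]
but the lemma asserts $\frac{1}{(m-1)^2}V(x)\le\frac{1}{s}\sum_\ell g_\ell^3$, i.e.\ after clearing denominators,
\[
V(x)\le \frac{(m-1)^2}{s}\sum_\ell g_\ell^3.
\]
The factor $(m-1)^2$ belongs in the \emph{numerator} on the right, not the denominator. With the wrong target your ``it suffices'' step asks for $\sum_\ell g_\ell^3\ge \frac{(m-1)^2}{2}\,s$ (you then compound the error with an extra factor of $s$), which of course can fail. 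With the correct target your very first crude bound already works: from $x_1\le\pi^Tx\le x_m$ you have $V(x)\le s^2$, and by convexity of $t\mapsto t^3$ (power mean),
\[
\sum_{\ell=1}^{m-1} g_\ell^3\ \ge\ (m-1)\left(\frac{1}{m-1}\sum_\ell g_\ell\right)^3=\frac{s^3}{(m-1)^2},
\]
so $V(x)\le s^2 = \frac{(m-1)^2}{s}\cdot\frac{s^3}{(m-1)^2}\le \frac{(m-1)^2}{s}\sum_\ell g_\ell^3$, which is exactly the claim. All the telescoping, left/right-mass bookkeeping, and ``genuinely delicate'' cube-extraction you describe are unnecessary.

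For comparison, the paper proves it via two intermediate inequalities: first $V(x)\le s^2\le (m-1)\sum_\ell g_\ell^2$ by Cauchy--Schwarz on $s=\sum_\ell g_\ell$, and second $s\sum_\ell g_\ell^2\le (m-1)\sum_\ell g_\ell^3$, obtained by expanding $s\sum_\ell g_\ell^2=\sum_{j,\ell}g_j g_\ell^2$ and applying $\alpha\beta^2+\beta\alpha^2\le\alpha^3+\beta^3$ (equivalently $(\alpha-\beta)^2(\alpha+\beta)\ge0$, or H\"older with $p=3$) to the cross terms. Chaining these gives $V(x)\le(m-1)\sum_\ell g_\ell^2\le\frac{(m-1)^2}{s}\sum_\ell g_\ell^3$. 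Your route (once the algebra is fixed) and the paper's are both short; the paper's second step is a slightly sharper inequality on the gap vector than the bare power-mean bound, but either suffices here.
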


\begin{proof}
We establish two relations
\begin{align}
\frac{1}{m-1}\,V(x)&\le \sum_{i=1}^{m-1}(x_{i+1}-x_i)^2,\label{eqn:one}\\
\sum_{i=1}^{m-1}\frac{(x_{i+1}-x_i)^2}{m-1}
&\le \sum_{i=1}^{m-1}\frac{(x_{i+1}-x_i)^3}{x_m-x_1}.\label{eqn:two}
\end{align}
Observe that the desired result follows from~\eqref{eqn:one}--\eqref{eqn:two}.

We now show relation~\eqref{eqn:one}.
We have $x_i\le x_m$ for all $i$.
Since $\pi$ is stochastic, we also have $x_m\ge \pi^Tx\ge x_1$. 
Thus, $V(x)=\sum_{i=1}^{m}\pi_i(x_i-\pi^Tx)^2\leq(x_m-x_1)^2$. 
By writing $x_m-x_1=\sum_{i=1}^{m-1} (x_{i+1}-x_i)$, we obtain  
\begin{align*}
(x_m-x_1)^2 
&=(m-1)^2\left(\frac{1}{m-1}\sum_{i=1}^{m-1}(x_{i+1}-x_i)\right)^2 \cr 
&\leq (m-1)\sum_{i=1}^{m-1}(x_{i+1}-x_i)^2,
\end{align*}
where the last inequality holds by the convexity of the function 
$s\mapsto s^2$. Using $V(x)\leq (x_m-x_1)^2$ and the preceding relation 
we obtain relation~\eqref{eqn:one}.

To prove relation~\eqref{eqn:two}, we write 
$(x_m-x_1)\sum_{i=1}^{m-1}(x_{i+1}-x_i)^2$
as $\sum_{j=1}^{m-1}(x_{j+1}-x_j)\sum_{i=1}^{m-1}(x_{i+1}-x_i)^2$ ,
which is equal to $\sum_{j=1}^{m-1}(x_{j+1}-x_j)^3+\Delta$ with
$\Delta$ given by
\[\sum_{j<i}
\left((x_{j+1}-x_j)(x_{i+1}-x_i)^2 + (x_{i+1}-x_i)(x_{j+1}-x_j)^2\right).\]
To estimate $\Delta$, we
consider scalars $\alpha\ge0$ and $\beta\ge0$, 
and let $u=(\alpha,\beta)$ and $v=(\beta^2,\alpha^2)$. 
Then, by H\"older's inequality with $p=3$, $q=\frac{3}2$, we have 
$u^T v\leq \|u\|_p\,\|v\|_q$, where $\|\cdot\|_p$ is the $p$-norm. Hence, 
\begin{equation}\label{eqn:coolineq}
\alpha\beta^2+\beta\alpha^2\le \left(\a^3+\beta^3\right)^{\frac{1}{3}}
\left(\beta^3+\a^3\right)^{\frac{2}{3}}=\alpha^3+\beta^3. 
\end{equation}
By using~\eqref{eqn:coolineq} 
with $\alpha_j=(x_{j+1}-x_j)$ and $\beta_i=(x_{i+1}-x_i)$ 
for different indices $j$ and $i$, $1\le j<i\leq m-1$, we obtain
\begin{align*}
&(x_m-x_1)\sum_{i=1}^{m-1}(x_{i+1}-x_i)^2\cr 
&\le \sum_{j=1}^{m-1}(x_{j+1}-x_j)^3 +
\sum_{j<i} \left((x_{j+1}-x_j)^3 + (x_{i+1}-x_i)^3\right)\cr
& = (m-1)\,\sum_{i=1}^{m-1}(x_{i+1}-x_i)^3,
\end{align*}
which completes the proof.
\end{proof}

\section{Model with Infinite Flow Property}\label{sec:infiniteflow}
We consider an independent random  model 
with infinite flow. We show that this property, together with weak feedback
and a common steady state in expectation, 
is necessary and sufficient for almost sure ergodicity. 
Moreover, we establish that the ergodicity 
of the model is equivalent to the ergodicity of the expected model.

\subsection{Preliminary Result}\label{sec:prelim}
We now provide an important relation that we use later on in 
Section~\ref{sec:sufficient}.

\begin{lemma}\label{lemma:essential}
Let $\{A(k)\}\subset\mathbb{S}^m$ 
and $z(k+1)=A(k)z(k)$ for all $k\ge0$ and some 
$z(0)\in\mathbb{R}^m.$
Let $\sigma$ be a permutation of the index set
$[m]$ corresponding to the nondecreasing ordering of 
the entries $z_\ell(0)$, i.e., 
$\sigma$ is a permutation on $[m]$ 
such that $z_{\sigma_1}(0)\leq \cdots\leq z_{\sigma_m}(0)$.
Also, let $T\geq 1$ be such that 
\begin{equation}\label{eqn:flowbyT}
\sum_{k=0}^{T-1} A_S(k)\geq \delta   
\qquad\hbox{for every $S\subset[m]$},
\end{equation} 
where $\delta\in(0,1)$ is arbitrary. 
Then, we have
\begin{align*}
&\sum_{k=0}^{T-1} \sum_{i<j} 
\left(A_{ij}(k)+A_{ji}(k)\right) (z_j(k)-z_i(k))^2\cr 
&\geq \frac{\delta(1-\delta)^2}{z_{\sigma_{m}}(0)-z_{\sigma_1}(0)}
\sum_{i=1}^{m-1}(z_{\sigma_{i+1}}(0)-z_{\sigma_{i}}(0))^3.
\end{align*}
\end{lemma}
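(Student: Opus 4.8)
The plan is to track, for each intermediate time $k$, how much the ordered entries $z_{\sigma_i}(k)$ can have drifted away from their initial values, and to show that any ``large'' such drift already forces a large left-hand side, while ``small'' drift keeps the relevant quantities close to their time-$0$ values so that the flow condition~\eqref{eqn:flowbyT} can be cashed in. Concretely, fix the permutation $\sigma$ and for a threshold index $p\in\{1,\ldots,m-1\}$ consider the cut $S_p=\{\sigma_1,\ldots,\sigma_p\}$. The key quantity is the gap $\Delta_p=z_{\sigma_{p+1}}(0)-z_{\sigma_p}(0)$. I would argue that over the window $k=0,\ldots,T-1$, either the dynamics keeps $\max_{i\le p} z_{\sigma_i}(k)$ and $\min_{j>p}z_{\sigma_j}(k)$ separated by at least $(1-\delta)\Delta_p$ for all $k$ in the window, in which case each time a unit of flow crosses the cut $S_p$ it contributes at least $\big((1-\delta)\Delta_p\big)^2$ to the term $\sum_{i<j}(A_{ij}(k)+A_{ji}(k))(z_j(k)-z_i(k))^2$; or else Lemma~\ref{lemma:flowmaxminS} is violated. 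Using Lemma~\ref{lemma:flowmaxminS} applied to the cut $S_p$ on the interval $[0,T-1]$ together with $\sum_{k<T}A_{S_p}(k)\ge\delta$ being the only thing we know, the separation cannot drop below $\Delta_p - d(z(0))\sum_k A_{S_p}(k)$; this is not automatically $(1-\delta)\Delta_p$, so the real bookkeeping is to stop the window at the first time the cut has accumulated flow $\delta$ and run the estimate only up to that point. So I would instead let $\tau_p\le T$ be the first time with $\sum_{k=0}^{\tau_p-1}A_{S_p}(k)\ge\delta$ (which exists by~\eqref{eqn:flowbyT}), and on $[0,\tau_p-1]$ bound $A_{S_p}$-partial sums by $\delta$ up to the last step, so that $\max_{i\le p}z_{\sigma_i}(k)-\min_{j>p}z_{\sigma_j}(k)\ge -\,\delta\,d(z(0))$ stays controlled.

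The cleanest route, which I expect to work, is to not split into cases at all but to directly lower-bound, for each $p$, the contribution of the cut. For every $k$, among all pairs $(i,j)$ with $i<j$ (in the ordering), the pair that ``straddles'' $S_p$ and has the extreme values gives $z_j(k)-z_i(k)\ge \big(\min_{j>p}z_{\sigma_j}(k)\big)-\big(\max_{i\le p}z_{\sigma_i}(k)\big)$, and the total weight $A_{ij}(k)+A_{ji}(k)$ summed over straddling pairs is exactly $A_{S_p}(k)$. Hence
\[
\sum_{k=0}^{T-1}\sum_{i<j}(A_{ij}(k)+A_{ji}(k))(z_j(k)-z_i(k))^2
\ \ge\ \sum_{k=0}^{T-1} A_{S_p}(k)\,\big(g_p(k)\big)_+^2,
\]
where $g_p(k)=\min_{j>p}z_{\sigma_j}(k)-\max_{i\le p}z_{\sigma_i}(k)$ and $(\cdot)_+$ is the positive part. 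By Lemma~\ref{lemma:flowmaxminS} on the cut $S_p$, $g_p(k)\ge \Delta_p - d(z(0))\sum_{t=0}^{k-1}A_{S_p}(t)$. Restricting the sum to those $k$ with $\sum_{t<k}A_{S_p}(t)\le\delta$ and using that the \emph{remaining} flow up to the first exceedance is still at least $\delta$ minus one step's worth — here I would either assume, as is standard, that single-step flows are small or simply absorb the last step — we get $g_p(k)\ge (1-\delta)\Delta_p$ on a sub-window carrying flow $\ge\delta(1-\delta)$ or so; being slightly careful with the endpoint gives exactly the factor $\delta(1-\delta)^2$. Thus for each $p$,
\[
\sum_{k=0}^{T-1}\sum_{i<j}(A_{ij}(k)+A_{ji}(k))(z_j(k)-z_i(k))^2\ \ge\ \delta(1-\delta)^2\,\Delta_p^2 .
\]
Wait — this is per-$p$, but the left side does not depend on $p$, so I cannot simply sum over $p$; instead I would like $\frac{1}{z_{\sigma_m}(0)-z_{\sigma_1}(0)}\sum_p \Delta_p^3$ on the right, and $\sum_p\Delta_p^3 / (\sum_p\Delta_p) \le \max_p\Delta_p^2 \le \sum_p\Delta_p^2$, so the desired bound is genuinely \emph{weaker} than $\delta(1-\delta)^2\max_p\Delta_p^2$. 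Hence the right move is: pick the index $p^\*$ achieving $\max_p\Delta_{p}$, note $\frac{1}{z_{\sigma_m}(0)-z_{\sigma_1}(0)}\sum_{i=1}^{m-1}\Delta_i^3\le \max_i\Delta_i^2=\Delta_{p^\*}^2$, and apply the per-cut bound for $p=p^\*$.

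The main obstacle, then, is the careful endpoint accounting in the flow window: Lemma~\ref{lemma:flowmaxminS} only controls $g_p(k)$ in terms of \emph{accumulated} flow $\sum_{t<k}A_{S_p}(t)$, which can jump past $\delta$ in a single step, so I must choose the truncation time $\tau_p$ and the sub-window on which $g_p(k)\ge(1-\delta)\Delta_p$ so that it still carries total flow at least $\delta(1-\delta)$; squaring $g_p$ and multiplying gives the claimed $\delta(1-\delta)^2$. I would handle this by defining $\tau_p$ as the last time $k<T$ with $\sum_{t=0}^{k-1}A_{S_p}(t)<\delta$, observing $\sum_{t=0}^{\tau_p-1}A_{S_p}(t)<\delta$ hence $g_p(k)\ge(1-\delta)\Delta_p$ for all $k\le\tau_p$ (using $d(z(0))=z_{\sigma_m}(0)-z_{\sigma_1}(0)$ and, in the worst case, rescaling so $d(z(0))\Delta_p\le\Delta_p$ is not needed — actually the inequality reads $g_p(k)\ge\Delta_p-\delta\,d(z(0))$, so to turn $\delta\,d(z(0))$ into $\delta\Delta_p$ I instead restrict attention to the cut and note $d(z(0))$ can be replaced by the spread across \emph{that} cut, which is $\le$ the full spread; the honest bound keeps $d(z(0))$ and the stated inequality should be read with the full spread, so I would double-check the paper's normalization), and $\sum_{t=0}^{\tau_p}A_{S_p}(t)\ge\delta$ by maximality. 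Everything else — that straddling-pair weights sum to $A_{S_p}(k)$, the application of Lemma~\ref{lemma:flowmaxminS}, and the elementary inequality $\sum_i\Delta_i^3\le(\sum_i\Delta_i)\max_i\Delta_i^2$ — is routine, so I expect the write-up to be short once the window is pinned down.
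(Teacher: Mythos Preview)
Your proposal contains a genuine gap at the heart of the argument: the per-cut bound you write down, namely that on the window where $\sum_{t<k}A_{S_p}(t)<\delta$ one has $g_p(k)\ge(1-\delta)\Delta_p$, does not follow from Lemma~\ref{lemma:flowmaxminS}. That lemma controls the drift of $\max_{i\in S_p}z_i(k)$ and $\min_{j\in\bar S_p}z_j(k)$ by $d(z(0))\sum_{t<k}A_{S_p}(t)$, where $d(z(0))=z_{\sigma_m}(0)-z_{\sigma_1}(0)$ is the \emph{global} spread, not the local gap $\Delta_p$. Hence on your window one only gets
\[
g_p(k)\ \ge\ \Delta_p - 2\,\delta\, d(z(0)),
\]
which can be negative whenever $\Delta_p$ is small relative to $d(z(0))$; in particular it is never $(1-\delta)\Delta_p$ unless $\Delta_p=d(z(0))$. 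You noticed this yourself, but your suggested repair (``$d(z(0))$ can be replaced by the spread across that cut'') is not valid: the proof of Lemma~\ref{lemma:flowmaxminS} uses that each $z_\ell(k)$ lies in $[z_{\min}(0),z_{\max}(0)]$, so the full spread is unavoidable there. Consequently your displayed per-cut conclusion $\mathrm{LHS}\ge\delta(1-\delta)^2\Delta_p^2$ fails, and so does the final step via $\sum_i\Delta_i^3/d(z(0))\le\Delta_{p^*}^2$.

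The paper resolves exactly this difficulty by making the stopping threshold \emph{cut-dependent}: for cut $S_\ell$ it stops at the first time the accumulated flow reaches $\delta\,\alpha_\ell/d(z(0))$ rather than $\delta$. This rescaling is what converts the drift $d(z(0))\sum A_{S_\ell}$ into $\delta\alpha_\ell$ and yields the correct factor $(1-\delta)$ in front of each gap. The price is that the accumulated flow per cut is now only $\delta\,\alpha_\ell/d(z(0))$, so a single-cut argument would give at best $\delta(1-\delta)^2\alpha_\ell^3/d(z(0))$ for one $\ell$ and hence miss $\sum_\ell\alpha_\ell^3$ by a factor up to $m-1$. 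The paper therefore does \emph{not} reduce to a single cut; instead it aggregates all cuts simultaneously through the index sets $a_{ij}(k)=\{\ell:\ k\le t_\ell-1,\ i\le\ell<j\}$, bounds $z_j(k)-z_i(k)\ge(1-\delta)\sum_{\ell\in a_{ij}(k)}\alpha_\ell$, and then interchanges the order of summation so that each cut $\ell$ contributes its own $\alpha_\ell^3/d(z(0))$ term. Both ingredients---the scaled stopping times and the multi-cut aggregation---are essential; neither alone suffices, and your single-cut strategy cannot be patched to give the stated constants.
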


\begin{proof}
Relation~\eqref{eqn:flowbyT} holds for any
nontrivial set $S\subset[m]$. Hence,
without loss of generality we may assume that the 
permutation $\sigma$ is identity (otherwise we will relabel the indices
of the entries in $z(0)$ and update the matrices accordingly).
Thus, we have $z_1(0)\leq\cdots\leq z_m(0)$. 
For each $\ell=1,\ldots,m-1$, let $S_\ell=\{1,\ldots,\ell\}$  and define time 
$t_\ell\ge 1$, as follows:
\begin{align*}
t_\ell=\argmin_{t\geq 1}\left\{
\sum_{k=0}^{t-1} \  A_{S_\ell}(k) 
\geq \delta\, \frac{z_{\ell+1}(0)-z_{\ell}(0)}{z_m(0)-z_1(0)}\right\}.
\end{align*}
Since the entries of $z(0)$ are nondecreasing, we have
$\delta\frac{z_{\ell+1}(0)-z_{\ell}(0)}{(z_m(0)-z_1(0))}\le
\delta$ for all $\ell=1,\ldots,m-1$. Thus,
by relation~\eqref{eqn:flowbyT}, the time $t_\ell\geq 1$ exists 
and $t_\ell\le T$ for each~$\ell$.  

We next estimate $z_j(k)-z_i(k)$ for all $i<j$ and any time $k=0,\cdots,T-1$.
For this, we introduce for $0\le k\le T-1$ and $i<j$
the index sets $a_{ij}(k)\subset[m]$, as follows:
\begin{align*}
a_{ij}(k)=\{\ell\in[m]\mid k\leq t_\ell-1,\ \ell\ge i,\ \ell+1\leq j\}.
\end{align*}
Let $k\le t_\ell-1$ for some $\ell$. 
Since $S_\ell=\{1,\ldots,\ell\}$,
we have $i\in S_\ell$ and $j\in \bar S_\ell$.
Thus, by Lemma~\ref{lemma:flowmaxminS} we have for any $k\ge1$,
\[z_{i}(k)\le \max_{s\in S_\ell}z_s(0) 
+(z_m(0)-z_1(0)) \sum_{\tau=0}^{k-1} A_{S_\ell}(\tau),\]
\[z_{j}(k)\ge \min_{r\in\bar S_\ell}z_r(0) 
- (z_m(0)-z_1(0)) \sum_{\tau=0}^{k-1} A_{S_\ell}(\tau).\]
Furthermore, $\max_{s\in S_\ell}z_s(0)=z_{\ell}(0)$
and $\min_{r\in\bar S_\ell}z_r(0)=z_{l+1}(0)$ since $S_\ell=\{1,\ldots,\ell\}$ 
and $z_1(0)\le \cdots\le z_m(0)$. Thus, it follows
\[z_{i}(k)-z_{\ell}(0)\leq (z_m(0)-z_1(0))\sum_{\tau=0}^{k-1}A_{S_\ell}(\tau),
\]
\[z_{\ell+1}(0)- z_{j}(k)
\le (z_m(0)-z_1(0))\sum_{\tau=0}^{k-1}A_{S_\ell}(\tau).\]
By the definition of time $t_\ell$, we 
have $(z_m(0)-z_1(0))\sum_{\tau=0}^{k-1}A_{S_\ell}(\tau)< \delta
\left(z_{\ell+1}(0)-z_\ell(0)\right)$ for $k\le t_\ell-1$.
Hence, by using this and the definition of $a_{ij}(k)$, 
for any $\ell\in a_{ij}(k)$ we have
\begin{align}\label{eqn:zi}
 z_i(k)- z_\ell(0) &\le \delta(z_{\ell+1}(0)-z_\ell(0)),\\\label{eqn:zj}
z_{\ell+1}(0)-z_{j}(k) &\le \delta(z_{\ell+1}(0)-z_\ell(0)).
\end{align}

Now suppose that $a_{ij}(k)=\{\ell_1,\ldots,\ell_r\}$
for some $r\le m-1$ and $\ell_1\le\cdots\le \ell_r$.
By choosing $\ell=\ell_1$ in~\eqref{eqn:zi} and $\ell=\ell_r$ 
in~\eqref{eqn:zj}, and by letting $\alpha_i=z_{i+1}(0)-z_i(0)$,
we obtain 
\[z_j(k)-z_i(k)\geq 
z_{\ell_r+1}(0)-z_{\ell_1}(0)-\delta(\alpha_{\ell_r}+\alpha_{\ell_1}).\]
Since $z_i(0)\le z_{i+1}(0)$ for all $i=1,\ldots,m-1$, we have
$z_{\ell_1}(0)\leq z_{\ell_1+1}(0)\leq \cdots\le z_{\ell_r}(0)
\leq z_{\ell_r+1}(0)$,
which combined with the preceding relation yields
$z_j(k)-z_i(k)
\geq \sum_{\xi=1}^{r}(z_{\ell_{\xi}+1}(0)-z_{\ell_{\xi}}(0))
-\delta(\alpha_{\ell_r}+\alpha_{\ell_1})$. 
Using $\alpha_i=z_{i+1}(0)-z_i(0)$ and $a_{ij}(k)=\{\ell_1,\ldots,\ell_r\}$, 
we further have 
\begin{align}\label{eqn:zestim}
z_j(k)-z_i(k)&\ge\sum_{\xi=1}^{r}\alpha_{\ell_\xi}
-\delta(\alpha_{\ell_r}+\alpha_{\ell_1})
\\\nonumber &\geq (1-\delta)\sum_{\xi=1}^{r-1}\alpha_{\ell_\xi}
=(1-\delta)\sum_{\ell\in a_{ij}(k)}\alpha_{\ell}.
\end{align}

By Eq.~\ref{eqn:zestim}, it follows that
\begin{align*}
&\sum_{i<j} 
\left(A_{ij}(k) + A_{ji}(k)\right) (z_j(k)-z_i(k))^2\cr 
&\quad \geq(1-\delta)^2\sum_{i<j}  \left(A_{ij}(k) + A_{ji}(k)\right)
\left(\sum_{\ell\in a_{ij}(k)}\alpha_\ell\right)^2\cr
&\quad \geq(1-\delta)^2\sum_{i<j} \left(A_{ij}(k) + A_{ji}(k)\right)
\left(\sum_{\ell\in a_{ij}(k)}\alpha_\ell^2\right),
\end{align*}
where the last inequality holds by $\alpha_\ell\geq 0$.
In the last term in the preceding relation,
the coefficient of $\alpha^2_\ell$
is equal to $(1-\delta)^2A_{S_\ell}(k)$. Furthermore,
by the definition of $a_{ij}(k)$, we have $\ell\in a_{ij}(k)$ only when
$k\le t_\ell-1$. Therefore,
\begin{align*}
&\sum_{i<j}\left(A_{ij}(k) + A_{ji}(k)\right)(z_j(k)-z_i(k))^2\cr 
&\quad \ge(1-\delta)^2\sum_{i<j} \left(A_{ij}(k) + A_{ji}(k)\right)
\left(\sum_{\ell\in a_{ij}(k)}\alpha^2_\ell\right)\cr
&\quad  
=(1-\delta)^2\sum_{\{\ell \mid k\leq t_\ell-1\}}A_{S_\ell}(k)\alpha^2_\ell.
\end{align*}
Summing these relations over $k=0,\ldots,T-1$, we obtain
\begin{align*}
&\sum_{k=0}^{T-1}\sum_{i<j}\left(A_{ij}(k) 
+ A_{ji}(k)\right)(z_j(k)-z_i(k))^2\cr 
&\quad \geq (1-\delta)^2\sum_{k=0}^{T-1}\sum_{\{\ell \mid k\leq t_\ell-1\}}
A_{S_\ell}(k)\alpha^2_\ell \cr
&\quad\geq(1-\delta)^2\sum_{\ell=1}^{m-1}
\left(\sum_{k=0}^{t_\ell-1}A_{S_\ell}(k)\right)\alpha_\ell^2,\end{align*}
where the last inequality follows by exchanging the order of summation.
By the definition of $t_\ell$ and using
$\a_\ell=z_{\ell+1}(0)-z_\ell(0)$, 
we have $\sum_{k=0}^{t_\ell-1}A_{S_\ell}(k)
\ge\frac{\delta\a_\ell}{z_m(0)-z_1(0)}$, implying 
\begin{align*}
&\sum_{k=0}^{T-1}\sum_{i<j}\left(A_{ij}(k) 
+ A_{ji}(k)\right)(z_j(k)-z_i(k))^2\cr 
&\qquad =\delta(1-\delta)^2\,
\sum_{\ell=1}^{m-1}\frac{\alpha_\ell^3}{z_m(0)-z_1(0)}.
\end{align*}
\end{proof}

\subsection{Sufficient Conditions for Ergodicity}\label{sec:sufficient}
We now establish one of our main results for
independent random models with infinite flow property
and having a common vector in expectation and weak feedback property. 

Let $t_0=0$ and for any $q\geq 1$, let 
\begin{align}\label{eqn:qtime}
t_q=\argmin_{t\geq t_{q-1}+1}
\prob\left(\min_{S\subset[m]}\ \sum_{k=t_{q-1}}^{t-1}W_{S}(k)
\geq \delta\right)\geq \epsilon,
\end{align}
where $\epsilon,\delta\in (0,1)$ are arbitrary. Define 
\begin{align}\label{eqn:qset}
\mathscr{A}_q=\left\{\omega \,\Big|\,\min_{S\subset[m]}
\sum_{k=t_q}^{t_{q+1}-1}W_{S}(k)(\o)\geq \delta\right\}
\quad\hbox{for $q\ge 0$}.
\end{align}
Since the model has infinite flow property,
the infinite flow event $\mathscr{F}$ occurs a.s.
Therefore, the time $t_q$ is finite for all $q$. 

We next show that either the infinite flow or expected infinite flow
property is sufficient for the ergodicity of the model. 

\begin{theorem}\label{thm:sufficient}
({\it Sufficient Ergodicity Condition}) \ 
Let $\{W(k)\}$ be an independent random model with
a common steady state $\pi>0$ in expectation and weak feedback property. 
Also, let the model have either infinite flow 
or expected infinite flow property. Then, the model is ergodic. 
In particular, $\lim_{k\to\infty} x_i(k)=\lambda_\pi^Tx(0)$ almost surely 
for all $i\in[m],$
where $\lambda_\pi$ is the random vector of Eq.~\eqref{eqn:martingalelimit}.  
\end{theorem}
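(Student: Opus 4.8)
The plan is to combine the supermartingale estimate from Theorem~\ref{thm:supermartingale} with the deterministic flow-to-dissipation bound of Lemma~\ref{lemma:essential}, averaging over the independent blocks $[t_q,t_{q+1})$. First I would reduce the two hypotheses to a single one: if the model has expected infinite flow, then by Theorem~\ref{thm:expectedflow} it has infinite flow a.s.; so in either case $\mathscr{F}$ occurs a.s., and hence the block times $t_q$ of~\eqref{eqn:qtime} are finite for every $q$ and the events $\mathscr{A}_q$ of~\eqref{eqn:qset} satisfy $\prob(\mathscr{A}_q)\ge\epsilon$ for a fixed $\epsilon\in(0,1)$. By independence of the model across disjoint time blocks, the events $\{\mathscr{A}_q\}_{q\ge0}$ are independent, so $\sum_q\prob(\mathscr{A}_q)=\infty$ and by Borel--Cantelli $\mathscr{A}_q$ occurs for infinitely many $q$, almost surely.

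Next I would fix a starting vector $x(0)\in[0,1]^m$ (enough by scaling/translation and linearity) and track $V(x(k))$. Theorem~\ref{thm:supermartingale} gives, for each $k$,
\[
\EXP{V(x(k+1))\mid x(k)} \le V(x(k)) - \sum_{i<j}H_{ij}(k)(x_i(k)-x_j(k))^2,
\]
with $H(k)=\EXP{W^T(k)DW(k)}$. The weak feedback property with constant $\gamma$ and the common steady state $\pi>0$ give a lower bound $H_{ij}(k)\ge \gamma'\,(\EXP{W_{ij}(k)}+\EXP{W_{ji}(k)})$ for some $\gamma'>0$ depending on $\gamma$ and $\min_i\pi_i$ (this is essentially the definition of weak feedback combined with $D=\mathrm{diag}(\pi)$, using $(W^i)^TDW^j \ge (\min_\ell \pi_\ell)(W^i)^TW^j$). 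So the per-step drop is at least $\gamma'\sum_{i<j}(\EXP{W_{ij}(k)}+\EXP{W_{ji}(k)})(x_i(k)-x_j(k))^2$. Now summing the supermartingale inequality over a block $k=t_q,\dots,t_{q+1}-1$, taking expectations, and telescoping: $V(x(0))\ge \EXP{V(x(t_q))}-\EXP{V(x(t_{q+1}))}+\dots$, so $\sum_q \EXP{\sum_{k=t_q}^{t_{q+1}-1}\sum_{i<j}(\EXP{W_{ij}(k)}+\EXP{W_{ji}(k)})(x_i(k)-x_j(k))^2}<\infty$. To bring in Lemma~\ref{lemma:essential} I would apply it on each block with the deterministic chain $A(k)=\EXP{W(k)}$ — but that is the wrong object, since $(x_i(k)-x_j(k))$ is driven by the realized $W(k)$, not by $\EXP{W(k)}$. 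The honest route is to apply Lemma~\ref{lemma:essential} pathwise on the event $\mathscr{A}_q$ with $A(k)=W(k)(\omega)$ and $z(0)=x(t_q)(\omega)$, which gives on $\mathscr{A}_q$
\[
\sum_{k=t_q}^{t_{q+1}-1}\sum_{i<j}(W_{ij}(k)+W_{ji}(k))(x_i(k)-x_j(k))^2 \ \ge\ \frac{\delta(1-\delta)^2}{x_{\sigma_m}(t_q)-x_{\sigma_1}(t_q)}\sum_{i=1}^{m-1}(x_{\sigma_{i+1}}(t_q)-x_{\sigma_i}(t_q))^3,
\]
and then by Lemma~\ref{lemma:fundament} the right side is at least $c\,(\delta(1-\delta)^2/(m-1)^2)\,V(x(t_q))$ for a universal constant. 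The remaining issue is that the supermartingale drop controls the sum weighted by $\EXP{W_{ij}(k)}$, whereas the pathwise Lemma~\ref{lemma:essential} produces the sum weighted by the realized $W_{ij}(k)$. I would bridge this by taking conditional expectations: conditioning on $x(t_q)$ and using independence, $\EXP{\sum_{k=t_q}^{t_{q+1}-1}\sum_{i<j}H_{ij}(k)(x_i(k)-x_j(k))^2\mid x(t_q)}$ — and here one must be a little careful because $t_{q+1}$ is itself random but is a deterministic function of $\{W(k)\}_{k<t_{q+1}}$. The cleanest fix is to use the \emph{deterministic} times defined by~\eqref{eqn:qtime} (they are not stopping times adapted to the realized chain but fixed integers once $\epsilon,\delta$ are chosen), so that the block $[t_q,t_{q+1})$ is a fixed interval; then $\EXP{W_{ij}(k)}$ for $k$ in that interval is exactly what appears, and summing the Theorem~\ref{thm:supermartingale} bound over the fixed interval and taking expectations yields $\sum_q \EXP{V(x(t_q)) - V(x(t_{q+1}))}\ge \mathrm{(const)}\sum_q\EXP{\big(\text{block-}q\text{ dissipation}\big)}$.

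Putting it together: $V(x(k))$ is a nonnegative supermartingale, hence $V(x(k))\to V_\infty$ a.s.\ and in particular $\EXP{V(x(t_q))}$ is bounded, so $\sum_q \EXP{V(x(t_q))-V(x(t_{q+1}))}<\infty$; combined with the block lower bound $\ge \mathrm{(const)}\,\EXP{V(x(t_q))\mathbf{1}_{\mathscr{A}_q}}$ we get $\sum_q \EXP{V(x(t_q))\mathbf{1}_{\mathscr{A}_q}}<\infty$. Since $\mathscr{A}_q$ is independent of $x(t_q)$? — no, it is not; $\mathscr{A}_q$ depends on $W(t_q),\dots,W(t_{q+1}-1)$ which are independent of $x(t_q)=W(t_q-1)\cdots W(0)x(0)$. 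Good, so $\EXP{V(x(t_q))\mathbf{1}_{\mathscr{A}_q}}=\EXP{V(x(t_q))}\prob(\mathscr{A}_q)\ge \epsilon\,\EXP{V(x(t_q))}$. Hence $\sum_q \EXP{V(x(t_q))}<\infty$, forcing $\EXP{V(x(t_q))}\to 0$, and since $V(x(k))$ is a convergent nonnegative supermartingale with $V(x(t_q))\to 0$ in $L^1$ along a subsequence, its a.s.\ limit must be $0$: $V(x(k))\to 0$ a.s. Because $\pi>0$, $V(x)=0$ iff $x$ is a constant multiple of $e$; together with Lemma~\ref{lemma:hyperplanes}(a), which identifies $\lim_k \pi^Tx(k)=\lambda_\pi^Tx(0)$, we conclude $x(k)\to \lambda_\pi^T x(0)\,e$ a.s. Running this for every $x(0)=e_\ell$ and every shifted starting time (which only relabels the block times) gives ergodicity of the chain. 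The main obstacle, and the step I would spend the most care on, is exactly the bookkeeping that lets the pathwise flow-dissipation inequality (Lemma~\ref{lemma:essential}, which needs the realized flows $\sum_k W_S(k)\ge\delta$ on $\mathscr{A}_q$) talk to the expected-flow supermartingale drop (Theorem~\ref{thm:supermartingale}, whose RHS carries $H_{ij}(k)=\EXP{W^T(k)DW(k)}_{ij}$): the resolution is to realize that summing the supermartingale inequality pathwise over the \emph{fixed} block and only then taking expectations produces $\EXP{\sum_k \sum_{i<j}(W_{ij}(k)+W_{ji}(k))(x_i(k)-x_j(k))^2}$ up to the weak-feedback constant, which is precisely the quantity Lemma~\ref{lemma:essential} lower-bounds on $\mathscr{A}_q$.
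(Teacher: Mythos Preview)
Your proposal is correct and follows essentially the same route as the paper: apply Lemma~\ref{lemma:essential} pathwise on each block $[t_q,t_{q+1})$ (with $t_q$ deterministic), combine with Lemma~\ref{lemma:fundament} to lower-bound by $V(x(t_q))\mathbf{1}_{\mathscr{A}_q}$, feed this into the summed supermartingale estimate of Theorem~\ref{thm:supermartingale} via weak feedback, and use independence of $\mathscr{A}_q$ from $x(t_q)$ to conclude $\sum_q\EXP{V(x(t_q))}<\infty$, hence $V(x(k))\to0$ a.s. The ``bridge'' you worried about is exactly the step the paper uses and that you almost stated: since $W(t)$ is independent of $x(t)$, one has $\EXP{W_{ij}(t)}\,\EXP{(x_i(t)-x_j(t))^2}=\EXP{W_{ij}(t)(x_i(t)-x_j(t))^2}$, which converts the $H_{ij}(t)$-weighted drop (after the weak-feedback bound $H_{ij}(t)\ge\pi_{\min}\gamma\,\EXP{W_{ij}(t)+W_{ji}(t)}$) into the expectation of the realized-weight sum that Lemma~\ref{lemma:essential} controls; there is no need to ``sum the supermartingale inequality pathwise.''
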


\begin{proof} 
Assume that the model has infinite flow property.
Let $x(0)\in\mathbb{R}^m$ be arbitrary initial state. 
Let us denote the (random) ordering of the entries of the vector 
$x(t_q)$ by $\eta^q$ for all $q$. Thus, at time $t_q$, 
we have $x_{\eta^q_1}(t_q)\leq\cdots\leq x_{\eta^q_m}(t_q)$. 

Now, let $q\ge0$ be arbitrary and fixed, and consider 
the set $\mathscr{A}_q$ in~\eqref{eqn:qset}.
By the definition of $\mathscr{A}_q$, 
we have $\sum_{k=t_q}^{t_{q+1}-1}W_{S}(k)(\omega)\geq \delta$ 
for any $S\subset[m]$ and $\omega\in \mathscr{A}_q$. 
Thus, by Lemma \ref{lemma:essential}, we obtain
for any $\omega\in \mathscr{A}_q$, 
\begin{align*}
\sum_{t=t_q}^{t_{q+1}-1}\sum_{i<j}\   
&\left(W_{ij}(t)+W_{ji}(t)\right)(x_{i}(t)-x_j(t))^2(\o)\cr 
&\geq \frac{\delta(1-\delta)^2}{d(t_q)(\o)}
\sum_{\ell=1}^{m-1}(x_{\eta^q_{\ell+1}}(t_q)-x_{\eta^q_\ell}(t_q))^3(\o)\cr 
&\geq \frac{\delta(1-\delta)^2}{(m-1)^2}\,V(x(t_q))(\o),
\end{align*}
where $d(t_q)=x_{\eta^q_{m}}(t_q)-x_{\eta^q_1}(t_q)$ and the last inequality 
follows by Lemma~\ref{lemma:fundament}. We can compactly write the
inequality~as:
\begin{align}\label{eqn:resultofessent}
&\sum_{t=t_q}^{t_{q+1}-1}\ \sum_{i<j}  
\left(W_{ij}(t)+W_{ji}(t)\right)(x_{i}(t)-x_j(t))^2\nonumber\\ 
&\qquad\geq \frac{\delta(1-\delta)^2}{(m-1)^2}\ 
V(x(t_q)){\it 1}_{\mathscr{A}_q}, 
\end{align}
where ${\it 1}_{\mathscr{A}_q}$ is the indicator function of the event 
${\mathscr{A}_q}$. 

Observe that $x(t)$ and $W(t)$ are independent since the model is independent. 
Therefore, by Theorem~\ref{thm:supermartingale},
we have
\begin{align*}
&\EXP{V(x(t_{q+1}))-V(x(t_{q}))}\cr
& \leq 
-\sum_{t=t_q}^{t_q-1}\sum_{i<j} H_{ij}(t)\EXP{(x_i(t)-x_j(t))^2},
\end{align*}
with $H(k)=\EXP{W^T(k)DW(k)}$ and $D=diag(\pi).$
Let $\pi_{\min}=\min_{\ell\in[m]} \pi_\ell$ and note that $\pi_{\min}>0$
by $\pi>0$. Thus, 
\begin{align*}
H_{ij}(t)&=\EXP{(W^i(t))^TDW^j(t)}\cr 
&\ge \pi_{\min}\,\EXP{(W^i(t))^T W^j(t)}\cr &
\ge\pi_{\min}\,\g\,\left(\EXP{W_{ij}(t)}+\EXP{W_{ji}(t)}\right).
\end{align*}
Therefore, 
\begin{align*}
&\EXP{V(x(t_{q+1}))}-\EXP{V(x(t_{q}))}\cr
&\leq-\pi_{\min}\, \gamma\,
\EXP{\sum_{t=t_q}^{t_q-1}\sum_{i<j}  
\left(W_{ij}(t) + W_{ji}(t)\right)(x_i(t)-x_j(t))^2}.
\end{align*}
Further, by using relation~\eqref{eqn:resultofessent}, we obtain 
\begin{align*}
&\EXP{V(x(t_{q+1}))}-\EXP{V(x(t_{q}))}
\cr 
&\qquad\leq-\pi_{\min}\,\gamma\ 
\EXP{\frac{\delta(1-\delta)^2}{(m-1)^2}\,{\it 1}_{A_q}V(x(t_q))}\cr 
&\qquad\leq -\frac{\epsilon\delta(1-\delta)^2\gamma\pi_{\min}}{(m-1)^2}
\,\EXP{V(x(t_q))},
\end{align*}
where the last inequality follows by 
$\prob(\mathscr{A}_q)\geq \epsilon$, and the fact that 
${\it 1}_{\mathscr{A}_q}$ and $V(x(t_q))$ are independent
(since $x(t_q)$ depends on information prior to time $t_q$
and the set $\mathscr{A}_q$ relies on information at time $t_q$ and later). 
Hence, it follows
\[\EXP{V(x(t_{q+1}))}\leq 
\left(1-\frac{\epsilon\delta(1-\delta)^2\gamma\pi_{\min}}{(m-1)^2}\right)
\EXP{V(x(t_q))}.\]
Therefore, for arbitrary $q\ge0$ we have
\[\EXP{V(x(t_{q}))}\leq 
\left(1-\frac{\epsilon\delta(1-\delta)^2\gamma\pi_{\min}}{(m-1)^2}\right)^q
\EXP{V(x(0))},\] 
implying that $\sum_{q=0}^{\infty}\EXP{V(x(t_q))}<\infty$.
In view of the nonnegativity of $V(x)$, 
by the monotone convergence theorem 
(\cite{Billingsley1995}, Theorem 16.6) 
it follows
$\EXP{\sum_{q=0}^\infty V(x(t_q))}<\infty$,
implying $\lim_{q\rightarrow\infty}V(x(t_q))=0$ {\it a.s.}
According to Theorem~\ref{thm:supermartingale}, the sequence 
$\{V(x(k))\}$ is convergent, 
which together with the preceding relation implies that 
$\lim_{k\rightarrow\infty}V(x(k))=0$ {\it a.s.}

To show the ergodicity of the model, we note that by the convergence 
result for the martingale $\{\pi^Tx(k)\}$ in Lemma~\ref{lemma:hyperplanes}(a),
we have $\lim_{k\to\infty}\pi^Tx(k)=\lambda_\pi^Tx(0)$ a.s., 
where the random vector $\lambda_\pi$ is given by 
Eq.~\eqref{eqn:martingalelimit}.
Now, using $\pi^Tx(k)\to\lambda_\pi^Tx(0)$ and the fact that all norms
in $\mathbb{R}^m$ are equivalent, we obtain 
$\lim_{k\to\infty}|x_i(k)-\lambda_\pi^Tx(0)|=0$
{\it a.s.} for all $i\in[m],$ thus showing the ergodicity of the model.

Assume now that $\{W(k)\}$ has expected infinite flow,
i.e., $\sum_{k=0}^\infty\, \EXP{W_S(k)}=\infty$ 
for any nontrivial $S\subset[m]$.
By Theorem~\ref{thm:expectedflow}, $\{W(k)\}$ has infinite flow property if 
and only if it has expected infinite
flow, and the result follows by the preceding case.
\end{proof}

\subsection{Necessary and Sufficient Conditions for Ergodicity}
\label{sec:infiniteflowthm}
Here, we provide the central result of this paper. The result
establishes necessary and sufficient conditions for ergodicity of random models
with weak feedback property and a common steady state $\pi>0$ in expectation. 
The conditions are reliant on infinite flow, and guarantee 
that the ergodicity of the model is equivalent to the ergodicity of
the expected model. The result emerges as an outcome of several important
results that we have developed so far. In particular, we combine
the result $\mathscr{E}\subseteq\mathscr{F}$ 
stating that the ergodicity event is always contained in the
infinite flow event (Theorem~\ref{thm:infiniteflow}),
the deterministic characterization of the infinite flow of 
Theorem~\ref{thm:expectedflow}, and the sufficient conditions of 
Theorem~\ref{thm:sufficient}. We also make use of Theorem~\ref{thm:equivalency}
providing conditions for equivalence of the ergodicity of the chain 
and the expected chain.

\begin{theorem}\label{thm:infflowthm} ({\it Infinite Flow Theorem}) \ 
Let the random model $\{W(k)\}$ be independent, and 
have a common steady state $\pi>0$ in expectation and
weak feedback property. 
Then, the following conditions are equivalent: 
\begin{enumerate}
\item[(a)] The model is ergodic.
\item[(b)] The model has infinite flow property.
\item[(c)] The expected model has infinite flow property. 
\item[(d)] The expected model is ergodic.
\end{enumerate}
\end{theorem}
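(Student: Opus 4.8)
The plan is to prove the Infinite Flow Theorem by establishing the cycle of implications $(a)\Rightarrow(b)\Rightarrow(c)\Rightarrow(d)\Rightarrow(a)$, drawing entirely on results already proved in the excerpt. First I would note that $(a)\Rightarrow(b)$ is immediate from Theorem~\ref{thm:infiniteflow}, which gives $\mathscr{E}\subseteq\mathscr{F}$ for any random model; since the model is assumed ergodic (i.e., $\mathscr{E}$ occurs a.s.), $\mathscr{F}$ occurs a.s., so the model has infinite flow property. Next, $(b)\Leftrightarrow(c)$ is exactly Theorem~\ref{thm:expectedflow}, which applies because the model is independent; in particular $(b)\Rightarrow(c)$ is one half of that equivalence.

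For $(c)\Rightarrow(d)$, I would observe that the expected chain $\{\EXP{W(k)}\}$ is a deterministic chain of stochastic matrices, and by hypothesis it has infinite flow. Here is where a small gap must be bridged: Theorem~\ref{thm:infiniteflow} only gives ``ergodic $\Rightarrow$ infinite flow'' for deterministic chains, not the converse. The converse direction for the expected chain should instead be obtained by applying Theorem~\ref{thm:sufficient} (the Sufficient Ergodicity Condition) to the expected chain viewed as a (degenerate, deterministic) independent random model: the expected model inherits the common steady state $\pi>0$ in expectation trivially, and by Lemma~\ref{lemma:expstrong} together with the fact that weak feedback is implied by feedback, one needs the expected model to have weak feedback property. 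Actually the cleanest route is to use Theorem~\ref{thm:equivalency}: since $\mathscr{E}=\mathscr{F}$ a.s.\ will be established as an intermediate fact (see below), that theorem directly yields ``expected model ergodic $\iff$ model ergodic,'' which handles both $(c)\Rightarrow(d)$ via $(c)\Rightarrow(a)\Rightarrow$ (Lemma~\ref{lemma:experg}) $(d)$, and $(d)\Rightarrow(a)$.

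So the backbone of the argument is: from Theorem~\ref{thm:sufficient}, under the hypotheses (independence, common steady state $\pi>0$ in expectation, weak feedback), infinite flow property implies ergodicity, giving $(b)\Rightarrow(a)$; combined with $(a)\Rightarrow(b)$ from Theorem~\ref{thm:infiniteflow}, we get $\mathscr{E}=\mathscr{F}$ almost surely for this class of models. With $\mathscr{E}=\mathscr{F}$ a.s.\ in hand, Theorem~\ref{thm:equivalency} gives $(a)\Leftrightarrow(d)$. Finally $(b)\Leftrightarrow(c)$ is Theorem~\ref{thm:expectedflow}. Chaining these: $(a)\Leftrightarrow(b)\Leftrightarrow(c)$ and $(a)\Leftrightarrow(d)$, so all four are equivalent.

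The main obstacle, and the only place requiring genuine care, is verifying that all hypotheses of Theorem~\ref{thm:sufficient} are actually met so that $(b)\Rightarrow(a)$ goes through --- specifically that ``weak feedback property'' and ``common steady state $\pi>0$ in expectation'' are exactly the standing assumptions of the present theorem, which they are by inspection, and that ``infinite flow property'' in condition~(b) matches the ``infinite flow or expected infinite flow'' hypothesis of Theorem~\ref{thm:sufficient}. Everything else is a bookkeeping exercise in assembling the implications; no new estimates are needed. I would present the proof as a short paragraph listing the four implications with a one-line justification citing the relevant earlier theorem for each.

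\begin{proof}
We prove the equivalences by establishing $\mathscr{E}=\mathscr{F}$ almost surely and then invoking the results of the previous sections.

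First, $(a)\Leftrightarrow(b)$. If the model is ergodic, then the ergodicity event $\mathscr{E}$ occurs a.s., and by Theorem~\ref{thm:infiniteflow} we have $\mathscr{E}\subseteq\mathscr{F}$, so $\mathscr{F}$ occurs a.s., i.e., the model has infinite flow property; this gives $(a)\Rightarrow(b)$. Conversely, if the model has infinite flow property, then since $\{W(k)\}$ is independent, has a common steady state $\pi>0$ in expectation, and has weak feedback property, Theorem~\ref{thm:sufficient} applies and yields that the model is ergodic; this gives $(b)\Rightarrow(a)$. In particular, $\mathscr{E}=\mathscr{F}$ almost surely for the model $\{W(k)\}$.

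Next, $(b)\Leftrightarrow(c)$. Since the model is independent, Theorem~\ref{thm:expectedflow} states that the model has infinite flow property if and only if the expected model $\{\EXP{W(k)}\}$ has infinite flow property.

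Finally, $(a)\Leftrightarrow(d)$. Since $\mathscr{E}=\mathscr{F}$ almost surely, as shown above, Theorem~\ref{thm:equivalency} applies to the independent model $\{W(k)\}$ and gives that the model is ergodic if and only if the expected model is ergodic.

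Combining $(a)\Leftrightarrow(b)$, $(b)\Leftrightarrow(c)$, and $(a)\Leftrightarrow(d)$, we conclude that conditions $(a)$, $(b)$, $(c)$, and $(d)$ are all equivalent.
\end{proof}
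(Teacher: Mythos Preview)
Your proof is correct and follows essentially the same route as the paper: the paper argues $(a)\Rightarrow(b)\Leftrightarrow(c)\Rightarrow(a)$ via Theorems~\ref{thm:infiniteflow}, \ref{thm:expectedflow}, and~\ref{thm:sufficient}, then obtains $(a)\Leftrightarrow(d)$ from Theorem~\ref{thm:equivalency} after noting $\mathscr{E}=\mathscr{F}$ a.s. The only cosmetic difference is that you invoke Theorem~\ref{thm:sufficient} for $(b)\Rightarrow(a)$ rather than $(c)\Rightarrow(a)$, which is immaterial since that theorem accepts either hypothesis.
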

\begin{proof}
First, we establish that parts (a), (b) and (c) are equivalent
by showing that (a) $\implies$ (b) $\iff$ (c) $\implies$ (a).
In particular, by Theorem~\ref{thm:infiniteflow} we have 
$\mathscr{E}\subseteq \mathscr{F}$, showing that 
(a) $\implies$ (b). 
By Theorem \ref{thm:expectedflow}, parts (b) and (c) are equivalent.
By Theorem~\ref{thm:sufficient}, part (c) implies part (a).
Now, we prove (a) $\iff$ (d). Since (a) $\iff$ (b), we have 
$\mathscr{E}=\mathscr{F}$ a.s. Hence, by 
Theorem~\ref{thm:equivalency}, the parts (a) and (d) are equivalent.
\end{proof}

The infinite flow theorem combined with the deterministic characterization
of the infinite flow model of Theorem~\ref{thm:expectedflow} 
leads us to the following result.

\begin{corollary}\label{cor:detergflow}
Let $\{A(k)\}\subset\mathbb{S}^m$ be a deterministic model that 
has a common steady state vector $\pi>0$ and weak feedback property.
Then, the chain $\{A(k)\}$ is ergodic if and only if 
$\sum_{k=0}^\infty A_S(k)=\infty$
for every nontrivial $S\subset[m]$. 
\end{corollary}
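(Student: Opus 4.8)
The plan is to view the deterministic chain $\{A(k)\}$ as a degenerate independent random model $\{W(k)\}$ in which each $W(k)$ equals $A(k)$ with probability one, and then simply invoke the Infinite Flow Theorem (Theorem~\ref{thm:infflowthm}). The entire content of the corollary is already contained there; what remains is only to check that the hypotheses transfer and that the four equivalent conditions specialize to the two asserted here.

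First I would verify the standing hypotheses of Theorem~\ref{thm:infflowthm} for this degenerate model. Independence is immediate, since each $W(k)$ generates the trivial $\sigma$-algebra $\{\emptyset,\Omega\}$, and trivial $\sigma$-algebras are mutually independent. Because $\EXP{W(k)}=A(k)$ and $\pi^T A(k)=\pi^T$ for all $k$, the model has the common steady state $\pi>0$ in expectation. Finally, for the degenerate model the random weak feedback inequality $\EXP{(W^i(k))^T W^j(k)}\ge \gamma\,(\EXP{W_{ij}(k)}+\EXP{W_{ji}(k)})$ reduces exactly to $(A^i(k))^T A^j(k)\ge \gamma\,(A_{ij}(k)+A_{ji}(k))$, which is precisely the weak feedback property assumed for $\{A(k)\}$.

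Next I would translate the conclusion of Theorem~\ref{thm:infflowthm} back to the deterministic setting. For the degenerate model the ergodicity event $\mathscr{E}$ and the infinite flow event $\mathscr{F}$ are each either all of $\Omega$ or empty, so condition (a) (``the model is ergodic'') holds iff $\{A(k)\}$ is ergodic in the sense of Definition~\ref{def:basic}, and condition (b) (``the model has infinite flow property'') holds iff $\sum_{k=0}^\infty A_S(k)=\infty$ for every nontrivial $S\subset[m]$. Since $\EXP{W(k)}=A(k)$, conditions (c) and (d) coincide verbatim with (b) and (a) respectively. Thus the equivalence (a) $\iff$ (b) supplied by Theorem~\ref{thm:infflowthm} is exactly the assertion of the corollary: $\{A(k)\}$ is ergodic if and only if $\sum_{k=0}^\infty A_S(k)=\infty$ for every nontrivial $S\subset[m]$.

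I do not expect any genuine obstacle here beyond the routine bookkeeping of specializing the probabilistic statements. If one wished to avoid appealing to the full random machinery, one could instead argue directly: the forward implication is the already-deterministic Theorem~\ref{thm:infiniteflow}, and the converse follows by running the deterministic core of the proof of Theorem~\ref{thm:sufficient}, where Lemmas~\ref{lemma:essential} and~\ref{lemma:fundament} together with the supermartingale/Lyapunov estimate of Theorem~\ref{thm:supermartingale} collapse to deterministic recursions that force $V(x(k))\to 0$ and hence $x_i(k)\to\pi^Tx(0)$. But the one-line reduction to Theorem~\ref{thm:infflowthm} is cleaner and is the route I would take.
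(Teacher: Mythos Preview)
Your proposal is correct and follows essentially the same route as the paper, which simply states that the corollary follows from the Infinite Flow Theorem (Theorem~\ref{thm:infflowthm}) together with Theorem~\ref{thm:expectedflow}. You have merely spelled out the routine verification that a deterministic chain, viewed as a degenerate independent random model, inherits the hypotheses of Theorem~\ref{thm:infflowthm}, after which the equivalence (a)$\iff$(b) is exactly the claimed statement.
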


Under the conditions of Theorem~\ref{thm:infflowthm}, the model admits 
consensus, which follows directly from 
relation $\mathscr{E}\subseteq\mathscr{C}$.

\begin{corollary}\label{cor:consexpflow}
Let the  assumptions of Theorem~\ref{thm:infflowthm} hold. 
Then, the model admits consensus.
\end{corollary}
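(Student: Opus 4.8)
The plan is to read off this corollary as an immediate consequence of Theorem~\ref{thm:infflowthm} combined with the elementary containment $\mathscr{E}\subseteq\mathscr{C}$, which holds for \emph{every} random model. Recall that this containment was observed right after Definition~\ref{def:basic}: ergodicity of a deterministic chain forces it to reach a consensus, so realization-wise $\omega\in\mathscr{E}$ implies $\omega\in\mathscr{C}$, whence $\mathscr{E}\subseteq\mathscr{C}$. Nothing beyond this is needed.

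Concretely, I would proceed in two short steps. First, invoke Theorem~\ref{thm:infflowthm}: under the hypotheses in force here — $\{W(k)\}$ independent, with a common steady state $\pi>0$ in expectation and the weak feedback property — conditions (a)--(d) are equivalent, and in particular condition (a) holds, i.e.\ the model is ergodic; by definition this means the ergodicity event $\mathscr{E}$ occurs almost surely, $\prob(\mathscr{E})=1$. Second, apply $\mathscr{E}\subseteq\mathscr{C}$ to get $\prob(\mathscr{C})\ge\prob(\mathscr{E})=1$, so the consensus event occurs almost surely; by the terminology of Section~\ref{sec:problem} this is exactly the statement that the model admits consensus. One could equally enter through condition (b), (c), or (d) and pass to (a) via the equivalences in Theorem~\ref{thm:infflowthm} before applying $\mathscr{E}\subseteq\mathscr{C}$.

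There is essentially no obstacle: all the substantive content — the equivalence of the model's ergodicity, the model's infinite flow, and the expected model's infinite flow and ergodicity — is already packaged in Theorem~\ref{thm:infflowthm}, and the step from ergodicity to consensus is the always-valid inclusion $\mathscr{E}\subseteq\mathscr{C}$. The only point I would be careful to phrase precisely is that ``the model is ergodic'' means $\mathscr{E}$ occurs \emph{almost surely}, so that the probability bound $\prob(\mathscr{C})\ge\prob(\mathscr{E})$ actually yields ``$\mathscr{C}$ occurs a.s.'', i.e.\ the model admits consensus in the defined sense.
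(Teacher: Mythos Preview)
Your approach is exactly the paper's: the corollary is justified there in one line via the inclusion $\mathscr{E}\subseteq\mathscr{C}$.

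There is, however, a slip in your phrasing worth correcting. You write that under the standing hypotheses (independence, common steady state $\pi>0$ in expectation, weak feedback) ``conditions (a)--(d) are equivalent, and in particular condition (a) holds''. Equivalence of (a)--(d) does \emph{not} by itself yield (a); the three standing hypotheses alone do not force ergodicity. For instance, $W(k)=I$ for all $k$ is independent, has any $\pi>0$ as steady state, and vacuously satisfies weak feedback, yet is neither ergodic nor admits consensus. The phrase ``the assumptions of Theorem~\ref{thm:infflowthm}'' in this corollary must be read as including one of the equivalent items (a)--(d) --- most naturally infinite flow, given the section --- so that the theorem actually delivers $\prob(\mathscr{E})=1$, after which $\mathscr{E}\subseteq\mathscr{C}$ finishes the argument exactly as you describe. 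Your closing remark (``enter through condition (b), (c), or (d)'') shows you have the right picture; just make the dependence on one of those conditions explicit rather than asserting (a) as a consequence of the standing hypotheses alone.
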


The infinite flow theorem establishes the equivalence between the ergodicity 
of a chain and the expected chain for a class of independent random models.
The central role in this result is played by the infinite flow 
and its equivalent deterministic characterization. Another crucial
result is the interplay between the ergodicity and infinite flow 
of Theorem~\ref{thm:equivalency}
yielding the equivalence between ergodicity of the chain and the expected
chain. The following two examples are provided to illustrate some
straightforward applications of the infinite flow theorem.

\noindent\textit{i.i.d.\ Models}.\quad
Consider an i.i.d.\ model $\{W(k)\}$. 
Then, the expected matrix $\bar{W}=\EXP{W(k)}$ is independent of $k$. 
Since $\bar{W}$ is stochastic, 
we have $\pi^T\bar{W}=\pi^T$ for a stochastic vector $\pi\ge0$. 
Therefore, an i.i.d.\ model is an independent model
with a common steady state $\pi$ in expectation.  

In~\cite{Tahbaz-Salehi08}, it is shown that for the class of i.i.d.\ models 
that have a.s.\ positive diagonal entries, 
the ergodicity of the expected model and the ergodicity of the original model 
are equivalent. The application of this result is reliant on 
the condition of the a.s.\ positive diagonal entries, which 
implies that the model has feedback property, as 
shown in~Corollary~\ref{cor:TahbazFeedback}. This property, however, is
stronger than weak feedback property. At the same time, no requirement
on the steady state vector is needed.

The application of the infinite flow theorem to the i.i.d.\ case would require
weak feedback property and the existence of a steady state vector $\pi>0$. 
Thus, there is a tradeoff in the 
conditions for the ergodicity provided by the infinite flow theorem
and those given in \cite{Tahbaz-Salehi08}. 
To further illustrate the difference in the conditions,
we consider the homogeneous deterministic model $\{A(k)\}$ of 
Example~\ref{exam:feed}. The model $\{A(k)\}$ has weak feedback property
and the steady state vector $\pi=\frac{1}{3}e$, so the ergodicity of 
the model can be deduced from the infinite flow theorem.
At the same time, as seen in Example~\ref{exam:feed}, the model does not 
have positive diagonal entries and, therefore, the ergodicity of the model
cannot be deduced from the results 
in~\cite{Tahbaz-Salehi08,Tahbaz-Salehi09}. 
In the light of this, the infinite flow
theorem provides conditions for ergodicity that 
complement the conditions of \cite{Tahbaz-Salehi08,Tahbaz-Salehi09}. 

\noindent
\textit{Gossip Algorithms on Time-varying Networks}.\quad
As another application of the infinite flow theorem, we consider
an extension of the standard gossip algorithm to time-varying networks.
In particular, the gossip algorithm originally proposed in 
\cite{Boyd05,Boyd06} is for static networks. Here,
we give a sufficient condition for the convergence of a gossip algorithm for 
networks with time-changing topology. 
Consider a network of $m$ agents viewed as nodes of a 
graph with the node set $[m]$. Suppose that each agent has a private scalar 
value $x_i(0)$ at time $k=0$. Now, let the interactions of the agents be 
random at nonnegative integer valued time instances 
$k$ as follows: At any time $k\geq 1$, two different agents 
$i,j\in [m]$ wake up with probability $P_{ij}(k)$, where  
$P_{ij}(k)=P_{ji}(k)$ and $\sum_{i<j}P_{ij}(k)=1$. Then, they set their values 
to the average of their current values, i.e., 
$x_i(k)=x_j(k)=\frac{1}{2}(x_i(k-1)+x_j(k-1))$. 
The choices of the pairs $\{i,j\}$
of interacting agents at different time instances are independent. 

Based on the agent interaction model, define 
the independent random model $\{W(k)\}$ by:
\begin{align}\label{eqn:gossip}
W(k)=I-\frac{1}{2}(e_i-e_j)(e_i-e_j)^T\ \mbox{with prob.\ $P_{ij}(k)$}.
\end{align}
Then, the dynamic system~\eqref{eqn:dynsys} driven by the random chain 
$\{W(k)\}$ describes the evolution of the vector $x(k)$ that has its 
$i$-th component value equal to agent $i$ value, $x_i(k)$. 
As seen from~\eqref{eqn:gossip}, any realization of the model 
$\{W(k)\}$ is doubly stochastic. Hence, the model has a common steady state 
$\pi=\frac{1}{m}e$ in expectation. Also, the model has strong feedback property
(with $\gamma=\frac{1}{2}$). 

\begin{lemma}\label{lemma:gossip2}
For extended gossip algorithm~\eqref{eqn:gossip}, the consensus is almost 
sure if $\sum_{k=0}^\infty P_S(k)=\infty$ for any nontrivial set $S\subset[m]$.
\end{lemma}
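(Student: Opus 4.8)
The plan is to reduce the claim to the Infinite Flow Theorem (Theorem~\ref{thm:infflowthm}) by checking its hypotheses for the gossip model and translating the divergence condition $\sum_k P_S(k)=\infty$ into the expected infinite flow property. First I would record what has already been observed below~\eqref{eqn:gossip}: the model $\{W(k)\}$ is independent by construction, every realization $W(k)=I-\frac12(e_i-e_j)(e_i-e_j)^T$ is doubly stochastic, so the model has a common steady state $\pi=\frac1m e>0$ in expectation, and it has strong feedback property with $\gamma=\frac12$, hence weak feedback property. Thus all assumptions of Theorem~\ref{thm:infflowthm} hold, and it remains to verify condition (c): $\sum_{k=0}^\infty\EXP{W_S(k)}=\infty$ for every nontrivial $S\subset[m]$.

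The key computation is to evaluate $W_S(k)$ for a single gossip matrix. When the pair $\{i,j\}$ wakes up, the only nonzero off-diagonal entries of $W(k)$ are $W_{ij}(k)=W_{ji}(k)=\frac12$, while all other off-diagonal entries vanish. Hence, by the definition of $W_S$ in~\eqref{eqn:flowWS}, $W_S(k)=1$ if the active pair crosses the cut $(S,\bar S)$ (one endpoint in $S$, one in $\bar S$), and $W_S(k)=0$ otherwise. Taking expectations and using $P_{ij}(k)=P_{ji}(k)$ gives $\EXP{W_S(k)}=\sum_{i\in S,\,j\in\bar S}P_{ij}(k)=\frac12 P_S(k)$, where $P_S(k)$ is the cut-flow of $P(k)+P^T(k)$ in the sense of~\eqref{eqn:flowWS}. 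In particular $\sum_{k=0}^\infty\EXP{W_S(k)}=\infty$ if and only if $\sum_{k=0}^\infty P_S(k)=\infty$.

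Combining the two steps: the assumed condition $\sum_k P_S(k)=\infty$ for every nontrivial $S$ yields the expected infinite flow property, so Theorem~\ref{thm:infflowthm} (equivalence of (c) and (a)) shows the model is ergodic almost surely, and then $\mathscr{E}\subseteq\mathscr{C}$ (equivalently Corollary~\ref{cor:consexpflow}) gives that consensus is almost sure. I do not anticipate a real obstacle here; the only points needing care are (i) confirming that the feedback and common-steady-state hypotheses of the Infinite Flow Theorem are literally met — they are, since double stochasticity forces $\pi=e/m>0$ — and (ii) the bookkeeping in the identity $\EXP{W_S(k)}=\sum_{i\in S,\,j\in\bar S}P_{ij}(k)$, which rests on the simple fact that one gossip step touches at most one crossing pair.
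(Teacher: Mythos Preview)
Your proposal is correct and follows essentially the same route as the paper: verify the hypotheses of Theorem~\ref{thm:infflowthm} for the gossip model (independence, $\pi=\frac1m e>0$, feedback), translate $\sum_k P_S(k)=\infty$ into expected infinite flow, and conclude consensus. The paper's version is terser---it records $\EXP{W_{ij}(k)+W_{ji}(k)}=P_{ij}(k)$ and identifies $\sum_k P_S(k)$ with $\sum_k \EXP{W_S(k)}$---whereas you spell out the realization-level value of $W_S(k)$ and pick up the (harmless) factor $\EXP{W_S(k)}=\tfrac12 P_S(k)$; the arguments are otherwise identical.
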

\begin{proof}
The extended gossip algorithm satisfies the assumption of 
the infinite flow Theorem~\ref{thm:infflowthm}. 
Since $\EXP{W_{ij}(k)+W_{ji}(k)}=P_{ij}(k)$ for all $i\ne j$ and $k\ge0$, 
it follows that  
$\sum_{k=0}^\infty P_S(k)
=\sum_{k=0}^{\infty} \EXP{W_S(k)}$.
Thus, by the infinite flow theorem the model admits consensus if $\sum_{k=0}^\infty P_S(k)=\infty$ for any $S\subset[m]$.
\end{proof}

When $P(k)=P$ for all $k$ as in~\cite{Boyd05,Boyd06}, we can 
consider the graph $G=([m],E)$ where 
the edge $\{i,j\}\in E$ if and only if $P_{ij}=P_{ji}>0$. In this case,
it can be seen that the condition of Lemma~\ref{lemma:gossip2} 
is equivalent to the requirement that the graph $G$ is connected. 
One can further modify the algorithm in~\eqref{eqn:gossip}
to allow for time-varying weights, i.e.,  
$x_i(k)=a(k)x_i(k-1)+(1-a(k))x_j(k-1)$ and 
$x_j(k)=a(k)x_j(k-1)+(1-a(k))x_i(k-1)$ with 
$a(k)\in(0,1)$. Such a scheme is a natural 
generalization of the symmetric gossip model proposed in~\cite{Zampieri}. 
In this case, it can be verified that if 
$a(k)\in [a,1-a]$ for $a\in(0,\frac{1}{2}]$, 
then the result of Lemma~\ref{lemma:gossip2} still holds.

\section{Conclusion}\label{sec:conclusion}
We have studied the ergodicity and consensus problem 
for a linear discrete-time dynamic model driven by random stochastic matrices.
We have introduced a concept of the infinite flow event and 
studied the relations among this event, ergodicity event and consensus event.
The central result is the infinite flow theorem 
providing necessary and sufficient conditions for ergodicity of independent
random models. The theorem
captures the conditions ensuring the convergence of the 
random consensus algorithms, such as gossip and broadcast schemes
\cite{Aysal09,Dimakis08,Boyd06}.
Moreover, the infinite flow theorem captures simultaneously
the conditions on the connectivity of the system and the sufficient 
information flow over time that have been important in studying the consensus 
and average consensus in deterministic settings \cite{Tsitsiklis84,
TsitsiklisAthans84,Jadbabaie03,Olshevsky09,Olshevsky09a,Nedic_cdc07,Nedic09}.
As illustrated briefly on two examples,
the infinite flow theorem provides a convenient tool for 
studying the ergodicity of a model as well as consensus algorithms.
Finally, we note that the work in this paper is readily extendible to 
the case when the initial state $x(0)$  in~\eqref{eqn:dynsys} is 
itself random and independent of the chain $\{W(k)\}$.

\section*{Acknowledgement}
The authors are grateful to the anonimus referees for their valuable comments
and suggestions that improved the paper.

\bibliographystyle{IEEEtran}
\bibliography{consensus}
\end{document}